\definecolor{bblue}{RGB}{204,248,255}
\definecolor{rred}{RGB}{255, 219, 213}
\definecolor{oran}{RGB}{255, 187, 153}
\newcommand{\cI}{\mathcal{I}}
\newcommand{\R}{\mathbb{R}}
\newcommand*\xbar[1]{%
	\hbox{%
		\vbox{%
			\hrule height 0.5pt % The actual bar
			\kern0.5ex%         % Distance between bar and symbol
			\hbox{%
				\kern-0.1em%      % Shortening on the left side
				\ensuremath{#1}%
				\kern-0.1em%      % Shortening on the right side
			}%
		}%
	}%
}
\newtheorem{lemma}{Lemma}[section]
\newtheorem{prop}{Proposition}[section]
\newtheorem{theorem}{Theorem}[section]
\newtheorem{remark}{Remark}[section]
\newtheorem{definition}{Definition}[section]
\DeclareMathOperator{\supp}{supp}
\numberwithin{equation}{section}
\newcommand*\bbar[1]{%
	\hbox{%
		\vbox{%
			\hrule height 0.5pt % The actual bar
			\kern0.365ex%         % Distance between bar and symbol
			\hbox{%
				\kern-0.1em%      % Shortening on the left side
				\ensuremath{#1}%
				\kern-0.1em%      % Shortening on the right side
			}%
		}%
	}%
} 
\begin{document}
	
	\begin{frontmatter}
		\title
		{
			Weak-strong uniqueness of the full coupled Navier-Stokes and Q-tensor system in dimension three
			\footnote{
				The authors are partially supported by NSFC-W2531006, NSFC-12250710674, NSFC-11831003, NSFC-12031012 and the Institute of Modern Analysis-A Frontier Research Center of Shanghai.
				The first author also gratefully acknowledges the support of the Scientific Research Foundation for Advanced Talent of Nantong University under Grant 135424639079 and the Jiangsu Innovation and Entrepreneurship Talent Program under Grant JSSCBS20250352.}
		}

		\author[rvt1]{Fan Yang}
		\ead{fanyang-m@ntu.edu.cn}
		
		\author[rvt2]{Junjie Zhou\corref{corresponding}}
		\ead{2023zjj@sjtu.edu.cn}	
		
		\cortext[corresponding]{Corresponding author.}
		
		\address[rvt1]{School of Mathematics and Statistics, Nantong University, Nantong, 226019, China}	
		
		\address[rvt2]{School of Mathematical Sciences, Shanghai Jiao Tong University, Shanghai, 200240, China}
		
		%\address[rvt1]{Graduate School of Mathematical Sciences, The University of Tokyo, Tokyo 153--8914, Japan}
		
		\begin{abstract}
			In this paper, we study the weak-strong uniqueness for the Leray-Hopf type weak solutions to the Beris-Edwards model of nematic liquid crystals in $\R^3$ with an arbitrary parameter $\xi\in\R$, which measures the ratio of tumbling and alignment effects caused by the flow.
			This result is obtained by proposing a new uniqueness criterion in terms of $(\Delta Q,\nabla u)$ with regularity $L_t^qL_x^p$ for $\frac{2}{q}+\frac{3}{p}=\frac{3}{2}$ and $2\leq p\leq 6$, which enables us to deal with the additional nonlinear difficulties arising from the parameter $\xi$.
			Compared with the known results, our finding reveals that the criterion of weak-strong uniqueness for $\xi\ne 0$ is a sub-regime of the one for the corotational case. The associated regularity assumption rises with the nonlinearity of the model.
            Moreover, we establish the global well-posedness of this model for small initial data in $H^s$-framework.
			
%			Our results yield that if there exists a strong solution, then it is unique among the Leray-Hopf type weak solutions associated with the same initial data.
%			
			
		\end{abstract}
		\begin{keyword}
			%active hydrodynamics\sep
			nematic liquid crystals\sep incompressible flows\sep Beris-Edwards model\sep Q-tensor description\sep weak-strong uniqueness\sep energy equality\sep global well-posedness
			\MSC[2020] 35A01\sep 35A02\sep 35K55\sep 35Q35\sep 76A15\sep 76D03
		\end{keyword}
		
	\end{frontmatter}
	%
	%\linenumbers
	%\newpage
	
	%\tableofcontents
	%
	%\newpage
	
	\section{Introduction}
	In recent decades, the modeling and analysis of the hydrodynamics of nematic liquid crystals have attracted much attention.
	%Generally,
	%The nematic liquid crystal is characterized by rod-like molecules which exhibit longrange orientational order but no positional order.
	In general, according to the different ways to describe the orientation of liquid crystal molecules,
	the Ericksen-Leslie model \cite{Eri1962,Eri1976,Les1968} and the Beris-Edwards model \cite{EB1994}
	%there are
	%three main continuum theories
	have been proposed to model nematic liquid crystals.
	%the Oseen-Frank theory [21], the Landau-de Gennes theory [5], and the Ericksen theory 
	%Based on various ways of describing order parameters in the continuum theory of liquid crystals, there exist three major hydrodynamic models for nematic liquid crystals, including
	%the Oseen-Frank model \cite{Frank1958,Oseen1933}, the Ericksen-Leslie model \cite{Eri1962,Eri1976,Les1968}, and the Beris-Edwards model \cite{EB1994}.
	%Roughly speaking,
	%The Oseen-Frank model is a variational model which represents the liquid crystal orientation as a vector field and posits that the vector field minimizes the free energy functional.
%	the configurations of liquid crystals are described by a vector field  and are.
	%In the Ericksen-Leslie theory, the dynamical model
	Among these models,
	the Ericksen-Leslie model is a system that couples a forced Navier-Stokes equation for the underlying fluid velocity field with an evolution equation for the unit-vector field $d\in\mathbb{S}^{d-1}$ that represents the macroscopic molecular orientations.
	However, the vector field is not the best choice for describing the defect structures
	which are important physical phenomena in liquid crystals \cite{defect}. In fact, the most comprehensive model is the Beris-Edwards model in which a symmetric, traceless $3\times 3$ matrix $Q$ is used to describe the orientation and degree of ordering of the liquid crystals \cite{Q-tensor-book}.
	%In contrast to vector models, $Q$-tensor allows the description of the biaxiality of the alignment.
	This has the advantage that the hydrodynamics of topological defects in liquid crystals can be included.
	For more background and discussion on various liquid crystal systems, we refer interested readers to \cite{BFO-book-2017,Frank1958,hiber2018,Oseen1933,Virga2012,Virga2018,Zarnescu2021,WZZ2021} and the references therein.

	In this paper, we shall be concerned with the following incompressible Beris-Edwards model in $\R^3$:
	\begin{equation}\label{eq1.1}
		\begin{cases}
			\partial_t Q + u\cdot\nabla Q - S(\nabla u,Q) =\Gamma H,\\
			\partial_t u + u\cdot\nabla u = \mu\Delta u-\nabla p + \nabla\cdot \left( \tau+\sigma \right),\\
			\nabla\cdot u = 0, \\
			(Q,u)(t,x)\mid_{t=0}=(Q_0,u_0),\hspace{1.4cm} x\in \R^3,\quad t>0,
		\end{cases}
	\end{equation}
	where $u \in \R^3$ is the flow velocity,
	%From the Landau-de Gennes theory, , which describes the primary and secondary directions of nematic alignment along with variations in the degree of nematic order \cite{Q-tensor-phy}.
	$Q\in \mathbb{M}^{3\times 3}$ denotes the nematic tensor order parameter and is often referred to as "$Q$-tensor", $p$ is the usual pressure, $\mu>0$ represents the viscosity coefficient, and $\Gamma>0$ is a collective rotational diffusion constant. Meanwhile, $S(\nabla u,Q)$, $H$, $\tau$ are symmetric tensors and $\sigma$ is a skew-symmetric tensor given by
	\begin{align}
		S(\nabla u, Q):=&\xi D\left( Q+\dfrac{1}{3}\mathbb{I}_3 \right) + \xi \left( Q+\dfrac{1}{3}\mathbb{I}_3 \right)D - 2\xi \left( Q+\dfrac{1}{3}\mathbb{I}_3 \right){\rm Tr}(Q\nabla u) + \Omega Q- Q\Omega,\\
        H:=&H[Q]= L\Delta Q-aQ+b\left[Q^2-\dfrac{{\rm Tr}(Q^2)}{3}\mathbb{I}_3\right]-cQ{\rm Tr}(Q^2),\\
		\tau:=&-\xi \left( Q+\dfrac{1}{3}\mathbb{I}_3 \right)H - \xi H\left( Q+\dfrac{1}{3}\mathbb{I}_3 \right) + 2\xi \left( Q+\dfrac{1}{3}\mathbb{I}_3 \right){\rm Tr}(QH) - L\nabla Q \odot\nabla Q,\\
		\sigma:=&QH-HQ=Q\Delta Q-\Delta QQ,
	\end{align}
	and $L, c>0$ and $a, b, \xi\in \R$ are constants, $\mathbb{I}_3$ denotes the $3\times 3$ identity matrix, and $D:=\frac{1}{2}\left( \nabla u + \nabla u^{T} \right)$ and $\Omega:=\frac{1}{2}\left( \nabla u - \nabla u^{T} \right)$ are the symmetric and antisymmetric part of the strain tensor with $(\nabla u)_{\alpha\beta}=\partial_\beta u_\alpha$. Moreover, $\nabla Q \odot\nabla Q$ is a $3\times 3$ matrix with its component $(\nabla Q \odot\nabla Q)_{\alpha\beta}$ given by $\partial_\beta Q_{\gamma\delta}\partial_\alpha Q_{\gamma\delta}$.
	In what follows, we use a partial Einstein summation convention, i.e., the
	repeated indices are summed over.
	
	More specifically, we remark that $Q=0$ corresponds to the isotropic phase of the nematic liquid crystals.
	If $Q$ has two equal non-zero eigenvalues, it is known as uniaxial liquid crystals, and $Q$ admits three different eigenvalues corresponds to the case of biaxial liquid crystals. In addition, $S(\nabla u,Q)$ appears because the order parameter $Q$ can be both rotated and stretched by flow gradients, and $\xi$ represents the ratio of the tumbling and aligning effects on the molecules exerted by a shear flow. Moreover, $\xi=0$ corresponds to the corotational case, which describes the molecules only tumble in a shear flow but do not align.
	Furthermore, the molecular tensor $H$ is related to the derivative of the Landau-de Gennes free energy $\mathcal{F}$ by
	%describes the relaxation of the order parameter towards the minimum of the free energy.
	\[
	H=-\frac{\delta \mathcal{F}}{\delta Q}+ \frac{1}{3}\mathbb{I}_3{\rm Tr}\frac{\delta \mathcal{F}}{\delta Q}
	\]
	and
	\[
	\mathcal{F}=\int \left( \dfrac{L}{2}|\nabla Q|^2  + \dfrac{a}{2} {\rm Tr}(Q^2) - \dfrac{b}{3} {\rm Tr}(Q^3) + \dfrac{c}{4}| {\rm Tr}(Q^2) |^2 \right) dA,
	\]
	see also \cite{DOY-2001,Q-tensor-book}.
	
	%Earlier developments of rigorous mathematical analysis to Beris-Edwards model have been investigated by many authors considering simplifications or modifications of the model.
	The Beris-Edwards models for liquid crystals have been studied extensively in the literature.
	%The first well-posedness result
	The first result regarding well-posedness for the system \eqref{eq1.1}
	was proposed by Paicu and Zarnescu \cite{Qtensor12}. In particular, they proved the existence of global weak solutions in $\R^d$ $(d=2,3)$ for the case $\xi=0$, as well as
	%the existence of global regular solutions with sufficiently regular initial data 
	higher global regularity and weak-strong uniqueness for dimension two. Later on, they generalized their results to the case of sufficiently small $\xi$ \cite{Qtensor11}. In fact, as was shown in \cite{2dwithoutxi}, this smallness assumption on $\xi$ is unnecessary for the existence of regular solutions in two dimensional periodic case. 
	Moreover, a uniqueness criterion of weak solutions was made in \cite{weak-Q-tensor}.
	Recently, by employing the maximal regularities of the Stokes operator and parabolic operator in Besov spaces, Xiao \cite{Q-unique-3d} studied the existence of global strong solution in a smooth and bounded domain $\mathcal{O}\subset\R^3$ with small initial data for $\xi =0$, as well as weak-strong uniqueness. Very recently, the strong well-posedness for general $\xi\in \R$ was obtained by Hieber, Hussein and Wrona in \cite{xi-all}.
	For the case of a bounded domain with homogeneous/inhomogeneous boundary data, the corresponding well-posedness results can be found in \cite{ables2014,ables2016}.
	In addition,
	Feireisl, Rocca, Schimperna and Zarnescu \cite{nonisothermal1,nonisothermal2} also derived a nonisothermal variant of the Beris-Edwards model with the Ball-Majumdar potential \cite{potential}. Then, they established the existence of global weak solutions for this system under periodic boundary conditions. For the coupled compressible Navier-Stokes and $Q$-tensor system, Wang, Xu and Yu studied the existence and long-time dynamics of global weak solutions \cite{Q-tensor-comp}.
	Furthermore, many
	%attempts
	%important
	progress have been shown concerning well-posedness and decay properties of 
	%We also refer for instance to \cite{Hang-Ding-15,Q-tensor-Torus} for 
	some modified Beris-Edwards models in terms of its free energy or stress tensor; refer to \cite{2dlowSobolev,A-Z2016-2d,Du-wang-20,Weak-t-regularity,Hang-Ding-15,Liu-dcdsb,LC-decay-22,LC-decay-19,Q-tensor-Torus} and references therein.
	
	The aim of this paper is to give a rather complete understanding of
	weak-strong uniqueness
	%the uniqueness properties of weak solutions
	for the system \eqref{eq1.1} with an arbitrary ratio of tumbling and alignment effects $\xi\in\R$ in $\R^3$.
	First, let us come back for a moment to the setting of the incompressible Navier-Stokes equations for the motivation of this work. It is well-known that there exist Leray-Hopf weak solutions, which were established by Leray \cite{Leary1934} and Hopf \cite{hopf1950}; their uniqueness is still unknown in general. However, we know that if $u$ is a Leray-Hopf weak solution which further satisfies
	\begin{equation}\label{eq:LPS}
		u\in L_t^qL_x^p \quad \text{for some $q\leq\infty$ and $p\geq d$ with $\dfrac{2}{q}+ \dfrac{d}{p}\leq 1$},
	\end{equation}
	then all Leray-Hopf weak solution with the same initial data must coincide (see \cite{kozono1996,Ladyzhenskaya,prodi1959,serrin1963}).
	This condition \eqref{eq:LPS} is known as the Ladyzhenskaya-Prodi-Serrin condition, and such uniqueness criteria for weak solutions are referred to as "weak-strong uniqueness".
	%Moreover, Kozono and Sohr in \cite{kozono1996} expanded the above result to the endpoint case $(p,q)=(d,\infty)$.
	Since Ladyzhenskaya-Prodi-Serrin condition also implies the regularity properties of Leray-Hopf weak solutions, we refer the interested readers to \cite{L3infity,serrin1962,Struwe1988} and the references therein.
	On the other hand, in the context of mathematical analysis for very weak solutions with regularity $L_t^qL_x^p$, it has been proved that uniqueness holds for $\frac{2}{q}+ \frac{d}{p}\leq 1$ with $p\geq d$; see \cite{Fabes1972,giga1986,kato1984,lions2001}. Only very recently, many attempts have been made to investigate the non-uniqueness problem of very weak solutions in the class $L_t^qL_x^{p}$ with $\frac{2}{q}+ \frac{d}{p}> 1$. The first result in this direction was proved by Buckmaster and Vicol \cite{buck2019}. In particular, using the convex integration method, they established the non-uniqueness of very weak solutions to the Navier-Stokes equations in the class $C_tL_x^{2+}$. Moreover, in \cite{nonunique2022,non-2d}, Cheskidov and Luo showed the non-uniqueness both in the class $L_t^qL_x^{\infty}$ ($1\leq q<2$, $d\geq 2$) and $C_tL_x^p$ ($1\leq p<2$), which indicate the sharp non-uniqueness near two endpoints of Ladyzhenskaya-Prodi-Serrin condition. We also refer the readers to \cite{convex_ns-review,review20,Luo_H1/200,Miao-Nie-Ye-1,Miao-Nie-Ye-2,Miao-Zhao-25} for more results,
	%for a review and more related results,
	and \cite{BLR-ill-2018,Luo-Titi,Li-MHD-1,Li-MHD-2,FNSE-Leray} for some recent progress on other hydrodynamic models.
	
%	In light of these recent advances,
%	we aim to
%	get a condition similar to that of Serrin for the Navier-Stokes equation for understanding the uniqueness properties of weak solutions
	
	When $\xi=0$, with the aid of Littlewood-Paley theory and logarithmic embedding inequality,
	%the weak-strong uniqueness for system \eqref{eq1.1} has been shown in \cite{Qtensor12} for dimension two with $\xi=0$. However,
	it has been proved in \cite{Qtensor12} that system \eqref{eq1.1} has solutions with higher regularity in $\R^2$, i.e.,
	\begin{equation}
		Q\in L^\infty\left( 0,T;H^{s+1} \right)\cap L^2\left( 0,T;H^{s+2} \right),\quad u\in L^\infty\left( 0,T;H^s \right)\cap L^2\left( 0,T;H^{s+1} \right),
	\end{equation}
	where the initial data $(Q_0,u_0)\in H^{s+1}\times H^s$ for $s>0$. Then, they obtained the associated weak-strong uniqueness. Note that the logarithmic embedding inequality fails in dimension three. Thus, we cannot get a similar result because of the difficulty in addressing the higher order energy estimates that used to close the a priori estimate. However, a uniqueness criterion similar to that of Serrin for Navier-Stokes equations has been made in \cite{weak-Q-tensor}, that is,
	\begin{equation}\label{eq1.8}
		(\Delta Q, \nabla u )\in L^q(0,T;L^p)\quad\text{with}\quad \dfrac{2}{q}+\dfrac{3}{p}=2,\quad \forall p\in [2,3].
	\end{equation}
	This result implies the weak-strong uniqueness of \eqref{eq1.1} holds in a smooth bounded domain $\mathcal{O}\subset \R^3$ for the case $\xi=0$. In addition, we mention that the existence of strong solutions and weak-strong uniqueness were also studied in \cite{Q-unique-3d} via the $L^p$-$L^q$ maximal regularity.
	%maximal regularities of the Stokes operator and parabolic operator in Besov spaces.
	
	In the setting of $\xi\ne 0$, the Beris-Edwards model exhibits the additional tumbling and alignment effects of molecules caused by the flow.
	%Due to the fact that 
	Note that the molecules of liquid crystals can align in a shear flow. Therefore,
	in order to understanding
	%the complex behavior of macroscopic specimens
	their complex dynamics behavior, it is worth investigating the model with a general $\xi\in\R$.
	%the quantity $\xi$ plays an important role in understanding their dynamics behaviors not only mathematically but also physically.
	%From a physical point of view, the Beris-Edwards model exhibits more dynamics behaviors for the molecules of liquid crystals in the case of non-zero constant $\xi$.
	%In general, the evolution of $Q$ is driven by the free energy of the molecules as well as by transport, distortion, tumbling and alignment effects caused by the flow 	
	However, in this case, the system \eqref{eq1.1} becomes much more complicated compared with that for the corotational case. As seen in \eqref{eq1.1},
	several strongly nonlinear terms appear and create significant challenges in the mathematical analysis of this model.
	%As a matter of fact,
	Although the authors of \cite{Qtensor11} proved the existence of global weak solutions in two and three dimensional cases when $\xi$ is sufficiently small, weak-strong uniqueness was only established in $\R^2$ for the same reasoning as in \cite{Qtensor12}.
	In fact, one can build on the work in \cite{xi-all} to show weak-strong uniqueness of system \eqref{eq1.1} in smooth bounded domains
	%where the authors studies the strong well-posedness of \eqref{eq1.1}
	for the case $\xi\in\R$ (see also \cite{Q-unique-3d}).
	%Inspired by the work \cite{weak-Q-tensor}, 
	Here we are more interested in whether uniqueness criterion \eqref{eq1.8} for the simplified model can be achieved for arbirtray
	%ratio of tumbling and alignment effects
	$\xi\in\R$ as well.
	Moreover, we emphasize that previous works in this direction are all limited to the situation of $\xi=0$. See also \cite{E-L-weak-strong} for the simplified Ericksen-Leslie system and \cite{weak-strong-3d} for the active liquid crystals.
	
	%Inspired by the work of Serrin, the first author \cite{weak-strong-3d} proved that
	In our recent work \cite{weak-strong-3d}, weak-strong uniqueness for the incompressible active liquid crystals was obtained in $\R^3$ under condition \eqref{eq1.8}. Formally, in the case of constant concentration of active particles, the analyzed model can be regarded as a modified system \eqref{eq1.1} by adding some phenomenological active terms and assume that $\xi$ is zero.
	%, and is rather complex to.
	%As a matter of fact,
	Interestingly, one can see that the uniqueness criterion is the same as in \cite{weak-Q-tensor}, even its structure is more complex.
	It appears that these simplified active terms
	%which contribute to the dynamics of constant active concentration 
	do not impose any further regularity assumption on the weak-strong uniqueness. But for the models where active concentration can be changed,
	or in the situation of $\xi\ne0$, it may not be tenable;
	%This is done in
	%see Theorem~\ref{thm1.2} below for the details.
	see Figure~\ref{fig1} below for a comparison.
	
	Let us note that our study is in line with the work \cite{serrin1963} of Serrin concerning weak-strong uniqueness of the incompressible Navier-Stokes equations.
	It is motivated by his approach that the weak solution specified herein should satisfy certain energy inequality.
	%We first specify the notion of weak solution to which we will refer in the sequel. Here for candidates of solutions we require the apparently weakest possible regularity properties which ensure that all expressions in the weak identities
	%His approach motivated us to consider
	%According to 
	Indeed, as in \cite{weak-strong-3d},
	%we introduce the notion of Leray-Hopf type weak solutions to the incompressible active nematic system \eqref{eq1.1} as follows: 
	the definition of Leray-Hopf type weak solutions to the incompressible Beris-Edwards model for general values of $\xi\in\R$ can be similarly stated as follows:

%	Let us note that such result in the case of $\xi=0$ has been shown in \cite{Qtensor12} for dimension two.
%	For a smooth and bounded domain $\mathcal{O}\subset\R^3$, the authors in \cite{weak-Q-tensor} 
%	
%	It is interesting to comment on
%	Moreover, the above system is supplemented with initial data
%	\begin{equation}
%		(Q,u)(t,x)\mid_{t=0}=(Q_0,u_0)(x) \quad \text{\rm for}\quad x\in\R^3.
%	\end{equation}

	\begin{definition}[Leray-Hopf type weak solution]\label{def1}
		A pair $(Q,u)$ is called a Leray-Hopf type weak solution to system \eqref{eq1.1} with the initial data $(Q_0,u_0)\in H^1(\R^3, S_0^3)\times L_\sigma^2(\R^3)$, if the following conditions hold:
		\begin{enumerate}[(i).]
			\item $Q\in L_{\rm loc}^\infty(\R_+;H^1(\R^3)) \cap L_{\rm loc}^2(\R_+;H^2(\R^3))$ and $u\in L_{\rm loc}^\infty(\R_+;L^2(\R^3)) \cap L_{\rm loc}^2(\R_+;H^1(\R^3))$;
			\item For any compacted supported $\phi\in C^\infty([0,\infty)\times \R^3;\mathcal{S}_0^3)$ and $\psi\in C^\infty([0,\infty)\times \R^3;\R^3)$ with $\nabla\cdot \psi=0$, we have
			\begin{equation}\label{eq1.2}
				\begin{split}
					\int_0^\infty \left( Q:\partial_t\phi \right)&+\Gamma L (\Delta Q:\phi) +(Q:u\cdot\nabla \phi) -2\xi \left( \left[Q +\dfrac{\mathbb{I}_3}{3}\right] {\rm Tr}(Q\nabla u):\phi \right) dt\\
					&+ \int_0^\infty\left( (\xi  D+\Omega)\left[Q +\dfrac{\mathbb{I}_3}{3}\right] :\phi \right) + \left( \left[Q +\dfrac{\mathbb{I}_3}{3}\right] (\xi D-\Omega):\phi \right)dt\\
					=& \Gamma\int_0^\infty \left( aQ-b\left[Q^2-\dfrac{{\rm Tr}(Q^2)}{3}\mathbb{I}_3\right]+cQ{\rm Tr}(Q^2) :\phi \right)dt -(Q_0(x):\phi(0,x))
				\end{split}
			\end{equation}
			and
			\begin{equation}\label{eq1.3}
				\begin{split}
					&\int_0^\infty \left( u,\partial_t\psi \right)+(u,u\cdot\nabla \psi)-\mu (\nabla u,\nabla\psi)dt +(u_0(x),\psi(0,x)) \\
					=& -L\int_0^\infty \left( \nabla Q \odot\nabla Q - Q\Delta Q+\Delta QQ :\nabla\psi \right)dt  + 2\xi\int_0^\infty\left(  \left[Q +\dfrac{\mathbb{I}_3}{3}\right]{\rm Tr}(QH): \nabla\psi \right)dt \\
					&-\xi\int_0^\infty\left( H\left[Q +\dfrac{\mathbb{I}_3}{3}\right]+  \left[Q +\dfrac{\mathbb{I}_3}{3}\right]H: \nabla\psi \right)dt .
				\end{split}
			\end{equation}
			\item Let $T\in (0,+\infty)$, then $(Q,u)$ satisfies the following energy inequality for all $t\in [0,T]:$
			\begin{equation}\label{eq1.4}
				\begin{split}
					\|u\|_{L^2}^2& + \|Q\|_{L^2}^2+ L \|\nabla Q\|_{L^2}^2 + 2\mu \int_{0}^{t} \|\nabla u\|_{L^2}^2 ds\\
					& +2a\Gamma  \int_{0}^{t} \|Q\|_{L^2}^2ds +2(a+1)\Gamma L\int_{0}^{t} \|\nabla Q\|_{L^2}^2ds + 2\Gamma L^{2} \int_0^t \|\Delta Q\|_{L^2}^2
					ds \\
					\leq & \|u_0\|_{L^2}^2 +\|Q_0\|_{L^2}^2 + L \|\nabla Q_0\|_{L^2}^2  + 4(1-a)\xi \int_0^t \left( Q\left[Q + \dfrac{\mathbb{I}_3}{3}\right]:\nabla u\right) -\left(  Q {\rm Tr}(Q\nabla u):Q\right)ds\\
					&  +2\Gamma\int_0^t \left( b\left[Q^2-\dfrac{{\rm Tr}(Q^2)}{3}\mathbb{I}_3\right]-cQ{\rm Tr}(Q^2) :Q - L \Delta Q \right)ds\\
					&  +4\xi\int_0^t \left( \left[Q + \dfrac{\mathbb{I}_3}{3}\right]\left\lbrace b\left[Q^2-\dfrac{{\rm Tr}(Q^2)}{3}\mathbb{I}_3\right]-cQ{\rm Tr}(Q^2)\right\rbrace :\nabla u \right)ds\\
					& - 4\xi\int_0^t \left( b\left[Q^2-\dfrac{{\rm Tr}(Q^2)}{3}\mathbb{I}_3\right]-cQ{\rm Tr}(Q^2) :Q {\rm Tr}(Q\nabla u) \right)ds.
				\end{split}
			\end{equation}
		\end{enumerate}
	\end{definition}
	\begin{remark}
		%A few remarks are in order.
		%\begin{enumerate}[(1).]
			%\item
			As was shown in \cite{Qtensor11}, there exist a global weak solution of system \eqref{eq1.1} subject to the initial data $(Q_0,u_0)\in H^1 \times L_\sigma^2$ for small $\xi$, which satisfies the above conditions (i) and (ii).
			%We also note that the smallness condition on $\xi$ is necessary because the solution therein was studied in infinite domain.
			However, the question of whether such a weak solution
			%of \cite{Qtensor11}
			satisfies the energy inequality \eqref{eq1.4} remains unclear.
			%it comes from a limiting process on smoother functions based on weak convergence
			The same as in \cite{weak-strong-3d},
			%Heuristically,
			it seems that the regularity provided by weak solutions is not enough to justify the convergence property of some nonlinear coupling terms; for instance,
			\[
			\begin{split}
				&\lim_{\epsilon\rightarrow 0}\int_0^t \left( Q^\epsilon\Omega^\epsilon -\Omega^\epsilon Q^\epsilon:\Delta Q^\epsilon \right)- \left(Q^\epsilon\Delta Q^\epsilon-\Delta Q^\epsilon Q^\epsilon:\nabla u^\epsilon\right)ds\\
				=&\int_0^t \left( Q\Omega -\Omega Q:\Delta Q \right)- \left(Q\Delta Q-\Delta Q Q:\nabla u\right)ds,
			\end{split}
			\]
			which thus vanishes according to certain cancellation rules.
			% of several nonlinear terms via the constructed solution sequence of \cite{Qtensor11}, which should cancel out in \eqref{eq1.4}. Therefore, 
			%it is still a challenging open problem to investigate the existence of such Leray-Hopf type weak solution, which will be further explored in our future work.
			%Leray-Hopf type weak solution,
			%\end{remark}
			%\item 
			Nevertheless, it is reasonable to consider Leray-Hopf type weak solutions for system \eqref{eq1.1}, because one can show that the energy inequality \eqref{eq1.4} becomes a strict equality under an additional condition similar to \eqref{eq1.8}. Together with Theorem~\ref{thm3.1} below, it is obvious that regular solutions established for the initial data $(Q_0,u_0)\in H^3 \times H^2$ satisfy the energy inequality. From this point, we believe that such a Leray-Hopf type weak solution can be derived for initial data with an additional assumption. However, our original interest lies in understanding the existence of such weak solutions to system \eqref{eq1.1} with $(Q_0,u_0)\in H^1 \times L_\sigma^2$, and we hence leave it for future work.
			%Furthermore, for these two different definitions of weak solutions (\cite{Qtensor11} vs. this paper), the readers can simply compare them to the very weak and Leray-Hopf weak solutions known for the Navier-Stokes equations.
			
			%; see also \cite[Lemma~2.1]{weak-strong-3d}.
		%\end{enumerate}
	\end{remark}
%	%\begin{remark}
%		As was shown in \cite{Qtensor11}, there exist a global weak solution of system \eqref{eq1.1} subject to initial data $(Q_0,u_0)\in H^1 \times L_\sigma^2$, which satisfies the conditions (i) and (ii). We also note that the smallness condition on $\xi$ is necessary because the solution therein was studied in infinite domain. But the question of whether a weak solution of \cite{Qtensor11} satisfies the above energy inequality remains unclear.
%		%it comes from a limiting process on smoother functions based on weak convergence
%		The same as in \cite{weak-strong-3d}, we cannot justify the convergence of several nonlinear terms via the constructed solution sequence of \cite{Qtensor11}, which should cancel out in \eqref{eq1.4}. Therefore, 
%		it is still a challenging open problem to investigate the existence of such Leray-Hopf type weak solution, which will be further explored in our future work.
%	    %Leray-Hopf type weak solution,
%	%\end{remark}
%	Neverthelesss, it is reasonable to consider Leray-Hopf type weak solutions for system \eqref{eq1.1}. Because one can show that energy inequality \eqref{eq1.4} becomes a strict equality under an additional condition similar to \eqref{eq1.8}; see also \cite[Lemma~2.1]{weak-strong-3d}.
%	%as a reference.
%	%for its derivation based on smooth solutions.
	
	Our first main result concerns the energy equality of system \eqref{eq1.1}, which is essential for studying the weak-strong uniqueness via the method introduced by Serrin \cite{serrin1963}. More precisely, we have
	%More precisely, we have
	\begin{theorem}\label{thm1.1}
		For any $\xi\ne0$, let $(Q,u)$ and $(R,v)$ be two Leray-Hopf type weak solutions to system \eqref{eq1.1} with the same initial data $(Q_0,u_0)\in H^1(\R^3, S_0^3)\times L_\sigma^2(\R^3)$. Assume that for some $2\leq p\leq 6$,
		\begin{equation}\label{eq:1.5}
			\Delta Q\in L^q(0,T;L^p(\R^3)) \quad\text{and}\quad \nabla u\in L^q(0,T;L^p(\R^3)), \quad \dfrac{2}{q}+\dfrac{3}{p}=\dfrac{3}{2}.
		\end{equation}
		Then, for all $t\in [0,T]$, we have
		\begin{equation}\label{eq:1.6}
			\begin{split}
				\left(u(t),v(t)\right)=& \|u_0\|_{L^2}^2 -2\mu \int_{0}^{t} (\nabla u:\nabla v) ds + \int_{0}^{t} (u,u \cdot \nabla v) + (v,v \cdot \nabla u) ds\\
				& +L\int_0^t \left( \nabla R \odot\nabla R - R\Delta R+\Delta RR :\nabla u\right) + \left( \nabla Q \odot\nabla Q - Q\Delta Q+\Delta QQ :\nabla v\right) ds \\
				%& +L\int_0^t \left( \nabla Q \odot\nabla Q - Q\Delta Q+\Delta QQ :\nabla v\right)ds \\
				&+\xi\int_0^t\left( H[R]\left[R +\dfrac{\mathbb{I}_3}{3}\right]+  \left[R +\dfrac{\mathbb{I}_3}{3}\right]H[R]-2  R{\rm Tr}(RH[R]): \nabla u \right)ds \\
				& +\xi\int_0^t\left( H[Q]\left[Q +\dfrac{\mathbb{I}_3}{3}\right]+  \left[Q +\dfrac{\mathbb{I}_3}{3}\right]H[Q]  - 2 Q{\rm Tr}\left(QH[Q]\right): \nabla v \right)ds \\
			\end{split}
		\end{equation}
		and
		\begin{equation}\label{eq:1.7}
			\begin{split}
				(Q(t):R(t)) + L &(\nabla Q(t):\nabla R(t))\\
				=& \|Q_0\|_{L^2}^2 + L \|\nabla Q_0\|_{L^2}^2 - 2a\Gamma  \int_{0}^{t} (Q:R )ds  - 2(a+1)\Gamma L\int_{0}^{t}  (\nabla R:\nabla Q)ds\\
				& - 2\Gamma L^{2} \int_0^t(\Delta R:\Delta Q)
				ds +\dfrac{2\xi}{3}\int_0^t ( D:R - L \Delta R) + ( \widebar{D}:Q - L \Delta Q)ds\\
				&+ \int_{0}^{t} (Q:u \cdot \nabla R)+ (R:v \cdot \nabla Q) +L (v \cdot \nabla R:\Delta Q) +L (u \cdot \nabla Q:\Delta R) ds \\
				&+ \int_0^t \left(\xi ( DQ+QD ) + ( \Omega Q-Q\Omega)  -2\xi  Q {\rm Tr}(Q\nabla u):R - L \Delta R\right) ds\\
				&+ \int_0^t \left(\xi ( \widebar{D}R+R\widebar{D} ) + ( \widebar{\Omega} R-R\widebar{\Omega})  -2\xi   R {\rm Tr}(R\nabla v):Q - L \Delta Q \right) ds\\
				&  +\Gamma\int_0^t \left( b\left[Q^2-\dfrac{{\rm Tr}(Q^2)}{3}\mathbb{I}_3\right]-cQ{\rm Tr}(Q^2) :R  - L \Delta R  \right)ds\\
				& + \Gamma\int_0^t \left( b\left[R^2-\dfrac{{\rm Tr}(R^2)}{3}\mathbb{I}_3\right]-cR{\rm Tr}(R^2) :Q - L \Delta Q \right)ds.
			\end{split}
		\end{equation}
		Moreover, the following energy equality holds for all $t\in [0,T]:$
		\begin{equation}\label{energy_eq}
			\begin{split}
				\|u\|_{L^2}^2& + \|Q\|_{L^2}^2+ L \|\nabla Q\|_{L^2}^2 + 2\mu \int_{0}^{t} \|\nabla u\|_{L^2}^2 ds\\
				& +2a\Gamma  \int_{0}^{t} \|Q\|_{L^2}^2ds +2(a+1)\Gamma L\int_{0}^{t} \|\nabla Q\|_{L^2}^2ds + 2\Gamma L^{2} \int_0^t \|\Delta Q\|_{L^2}^2
				ds \\
				= & \|u_0\|_{L^2}^2 +\|Q_0\|_{L^2}^2 + L \|\nabla Q_0\|_{L^2}^2 + 4(1-a)\xi \int_0^t \left( Q\left[Q + \dfrac{\mathbb{I}_3}{3}\right]:\nabla u\right)-\left(  Q {\rm Tr}(Q\nabla u):Q\right)ds \\
				&  +2\Gamma\int_0^t \left( b\left[Q^2-\dfrac{{\rm Tr}(Q^2)}{3}\mathbb{I}_3\right]-cQ{\rm Tr}(Q^2) :Q - L \Delta Q \right)ds\\
				&  +4\xi\int_0^t \left( \left[Q + \dfrac{\mathbb{I}_3}{3}\right]\left\lbrace b\left[Q^2-\dfrac{{\rm Tr}(Q^2)}{3}\mathbb{I}_3\right]-cQ{\rm Tr}(Q^2)\right\rbrace :\nabla u \right)ds\\
				& - 4\xi\int_0^t \left( b\left[Q^2-\dfrac{{\rm Tr}(Q^2)}{3}\mathbb{I}_3\right]-cQ{\rm Tr}(Q^2) :Q {\rm Tr}(Q\nabla u) \right)ds.
			\end{split}
		\end{equation}
	\end{theorem}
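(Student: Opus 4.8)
The plan is to adapt Serrin's strategy for the incompressible Navier--Stokes equations, in the form developed in \cite{weak-strong-3d}: regularize the two weak solutions by space--time mollification, use the mollified fields as test functions in the weak formulations \eqref{eq1.2}--\eqref{eq1.3} satisfied by the \emph{other} solution (and by themselves), and then pass to the limit. The exact cancellations built into the energy law of \eqref{eq1.1} --- between $\nabla\cdot(\tau+\sigma)$ in the velocity equation and the $S(\nabla u,Q)$ and $H$ contributions in the $Q$-equation --- produce the $L$- and $\xi$-integrals on the right-hand sides of \eqref{eq:1.6}--\eqref{energy_eq}, while \eqref{eq:1.5}, combined with the Leray--Hopf bounds of Definition~\ref{def1}(i), is exactly what makes each bilinear and trilinear term finite and what kills the commutator errors in the limit.

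Concretely, fix $T>0$, take a standard mollifier $\eta_\varepsilon$ in $(t,x)$ (one-sided in $t$ near $0$), and set $Q^\varepsilon=\eta_\varepsilon*Q$, $u^\varepsilon=\eta_\varepsilon*u$, and likewise $R^\varepsilon,v^\varepsilon$; these are smooth, $u^\varepsilon,v^\varepsilon$ stay divergence free, and after a temporal cut-off they are admissible test functions. Beyond the Leray--Hopf convergences one records, from \eqref{eq:1.5}: $\nabla u^\varepsilon\to\nabla u$ and $\Delta Q^\varepsilon\to\Delta Q$ in $L^q_tL^p_x\cap L^2_tL^2_x$, and --- by Gagliardo--Nirenberg interpolation between $Q\in L^\infty_tH^1_x$ and $\Delta Q\in L^q_tL^p_x$ --- a gain of regularity for $\nabla Q$ that makes $u\cdot\nabla Q\in L^4_tL^2_x$, so that products such as $(u\cdot\nabla Q):\Delta R$ with $\Delta R\in L^2_tL^2_x$ are integrable. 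Since $(Q,u)$ satisfies \eqref{eq:1.5} it belongs to $C([0,T];H^1)\times C([0,T];L^2)$, so every pointwise-in-time quadratic quantity, and the bilinear expression $(u(t),v(t))$ (via weak continuity of $v$), is attained in the limit.

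For \eqref{eq:1.6} I would take $\psi=v^\varepsilon$ in the velocity equation for $(Q,u)$ and $\psi=u^\varepsilon$ in the velocity equation for $(R,v)$; for \eqref{eq:1.7}, $\phi=R^\varepsilon-L\Delta R^\varepsilon$ in the $Q$-equation for $(Q,u)$ and $\phi=Q^\varepsilon-L\Delta Q^\varepsilon$ in the $R$-equation for $(R,v)$. In each case, \emph{before} sending $\varepsilon\to0$, I would integrate by parts at the smooth level so that the convective couplings appear only in the forms $\nabla u:(\nabla Q\odot\nabla R)$, $(u\cdot\nabla u,v^\varepsilon)$ and $(u\cdot\nabla Q):\Delta R^\varepsilon$ (never $u\otimes u$ against $\nabla v$ or $u$ against $\nabla\Delta R$), and then add the two identities. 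The time-derivative terms telescope to the left-hand sides minus the initial data; the viscous and $a\Gamma L\Delta(\cdot)$ terms yield the dissipative integrals; the commutators of DiPerna--Lions type in the transport terms vanish; and the stress terms converge term by term by Hölder's inequality, using $Q,R\in L^\infty_t(L^2\cap L^6)\cap L^2_tL^\infty$, $\nabla Q,\nabla R\in L^r_tL^s_x$ for every $\tfrac{2}{r}+\tfrac{3}{s}=\tfrac{3}{2}$ with $s\in[2,6]$, $\Delta Q\in L^q_tL^p_x\cap L^2_tL^2_x$, $\nabla v\in L^2_tL^2_x$, together with the endpoint sharpenings $p=2\Rightarrow\Delta Q\in L^\infty_tL^2\Rightarrow Q\in L^\infty_tL^\infty$ and $q=2\Rightarrow\Delta Q\in L^2_tL^6$; the relation $\tfrac{2}{q}+\tfrac{3}{p}=\tfrac{3}{2}$ is precisely the scaling that closes every one of these products. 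This gives \eqref{eq:1.6} and \eqref{eq:1.7}, and then \eqref{energy_eq} follows either from the same scheme with $(R,v)=(Q,u)$, or by specializing \eqref{eq:1.6}--\eqref{eq:1.7} to $(R,v)=(Q,u)$ and using $\nabla\cdot u=0$ to kill $(u,u\cdot\nabla u)$, $(Q:u\cdot\nabla Q)$ and the combination $2L\!\int(\nabla Q\odot\nabla Q:\nabla u)+2L\!\int(u\cdot\nabla Q:\Delta Q)$; in particular, once \eqref{eq:1.5} is in force \eqref{energy_eq} is deduced from (i)--(ii) alone, so the a priori inequality of Definition~\ref{def1}(iii) becomes an identity.

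The main obstacle is the $\xi\neq0$ part of the velocity stress $\tau$: through $H=L\Delta Q+(\text{lower-order terms in }Q)$ it couples $\Delta Q$ --- and, via $2\xi(Q+\tfrac{1}{3}\mathbb{I}_3){\rm Tr}(QH)$, even $Q^2\Delta Q$ --- to $\nabla v$, where $v$ is merely a Leray--Hopf field with $\nabla v\in L^2_tL^2_x$. Making sense of, and passing to the limit in, $\xi\!\int(Q+\tfrac{1}{3}\mathbb{I}_3)H[Q]:\nabla v$ and $\xi\!\int(Q+\tfrac{1}{3}\mathbb{I}_3){\rm Tr}(QH[Q]):\nabla v$ is exactly what forces the integrability hypothesis onto $\Delta Q$ rather than merely onto $\nabla Q$, and is the heart of the argument; these terms vanish identically when $\xi=0$, which is why the earlier results were confined to the corotational case. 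The symmetric couplings in the $Q$-equation --- where $S(\nabla v,R)$, the cubic part of $H[R]$ and $\Omega Q-Q\Omega$ are tested against $R^\varepsilon-L\Delta R^\varepsilon$, with $\Delta Q\in L^q_tL^p_x$ having to absorb the extra powers of $Q$ and of $\nabla v$ --- are controlled by the same Hölder/interpolation bookkeeping, with the endpoint improvements above invoked as $p\to2$ or $p\to6$.
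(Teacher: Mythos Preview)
Your proposal is essentially correct and follows the same Serrin-type strategy as the paper: mollify, use the mollified fields of one solution as test functions in the weak formulation of the other, add the symmetric identities, and pass to the limit. You also correctly single out the decisive new obstacle for $\xi\neq0$, namely the quartic coupling $Q\,{\rm Tr}(Q\Delta Q)$ against $\nabla v$ (and its mirror $R\,{\rm Tr}(R\Delta R)$ against $\nabla u$), which is precisely what forces the assumption on $\Delta Q$ with $2\le p\le 6$.

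Two implementation points where the paper differs from your sketch are worth flagging. First, the paper does \emph{not} use a single space--time mollifier: it combines time-only mollification $f_\varepsilon$ with spatial $C_c^\infty$ approximating sequences $(Q^i,u^i),(R^i,v^i)$ converging in the relevant norms, then passes $i\to\infty$ before $\varepsilon\to0$; this makes the admissibility of test functions immediate and cleanly separates the two limits. Second, the paper does not invoke DiPerna--Lions commutators at all---every nonlinear term is handled by direct H\"older/interpolation bounds (Lemmas~\ref{lem2.2}--\ref{lem2.4} and the key estimates \eqref{eq3.8}, \eqref{eq3.14}, \eqref{eq3.24}, \eqref{eq3.27}), showing that the difference between the $\varepsilon$-level (or $i$-level) term and its limit is controlled by a quantity that tends to zero. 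Your phrasing ``integrate by parts at the smooth level so that the convective couplings appear only in the forms $(u\cdot\nabla u,v^\varepsilon)$'' is slightly off: for the $(R,v)$ equation, $v\cdot\nabla v$ is not a priori integrable, so the paper keeps the convective term as $(v,v\cdot\nabla u_\varepsilon)$ and exploits $\nabla u\in L^q_tL^p_x$ on the test side (Lemma~\ref{lem2.3}); analogously the $u\cdot\nabla Q$ term is not put into $L^4_tL^2_x$ but is paired with $\Delta R_\varepsilon$ via Lemma~\ref{lem2.4}. These are refinements of the bookkeeping rather than a different idea.
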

	\begin{remark}
		We list a few remarks here concerning Theorem~\ref{thm1.1}.
		\begin{enumerate}[(1).]
			\item This result implies that if a Leray-Hopf type weak solution of \eqref{eq1.1} satisfies the condition \eqref{eq:1.5}, then energy equality holds. In particular, it holds for all weak solutions constructed in \cite{Qtensor11} under the condition \eqref{eq:1.5}; see the proof of Theorem~\ref{thm1.1} in Section~\ref{Sec.3} below.
			\item In contrast to the regularity condition of energy equality for the case $\xi=0$,
			we impose a slightly stronger condition on $(\Delta Q,\nabla u)$, which is basically used to overcome the analytical difficulties generated by the non-zero parameter $\xi$.
			\item
			In view of recent studies on energy equality for the incompressible Navier-Stokes equations (see \cite{shinbrot1974}), one may expect to get a weaker condition such that the above energy equality holds. Indeed, we would like to point out that the condition \eqref{eq:1.5} is taken to ensure uniqueness properties of weak solutions for system \eqref{eq1.1}, which is not optimal for energy equality in general.
			%we would like to point out that
		\end{enumerate}
		% In fact, we would like to point out that energy equality \eqref{energy_eq} holds for all weak solutions studied in \cite{weak-ALC} under the condition \eqref{eq:1.5}.
	\end{remark}
	In order to show weak-strong uniqueness of \eqref{eq1.1},
	%as presented in
	%Regular solutions of system \eqref{eq1.1} have only been proved to exist in $\R^2$ \cite{weak-ALC}.
	%which is much stronger than that in the sense of positive definiteness.
	we introduce the following notion of strong solution, which is stronger in the sense that a weak solution has some additional regularity.
	%that we are going to handle with:
	\begin{definition}[Strong solution]\label{def1.2}
		For any $\xi\ne 0$ and $T>0$, let $(Q,u)$ be a weak solution to system \eqref{eq1.1} with initial data $(Q_0,u_0)\in H^1(\R^3, S_0^3)\times L_\sigma^2(\R^3)$ such that condition $(i)$ and $(ii)$ hold. If $(Q,u)$ further belongs to the class
		\[
		\Delta Q\in L^q(0,T;L^p(\R^3)) \quad\text{and}\quad \nabla u\in L^q(0,T;L^p(\R^3)),\quad \dfrac{2}{q}+\dfrac{3}{p}\leq \dfrac{3}{2},\,\, p\geq 2,\,\, q\geq \frac{4}{3},
		\]
		then we call it a strong solution of system \eqref{eq1.1}.
	\end{definition}
	%On the other hand, to derive the weak-strong uniqueness for dimension three, the following concept of strong solution to system \eqref{eq1.1} was proposed without rigorous proof in \cite{weak-strong-3d}.
	%Now we are ready to state our headline theorem, one obtains the following
	Now, we are in a position to state our second main result regarding the  weak-strong uniqueness for the Leray-Hopf type weak solutions of system \eqref{eq1.1} for arbirtray $\xi\ne 0$.
	\begin{theorem}\label{thm1.2}
		For any $\xi\ne0$, let $(Q,u)$ and $(R,v)$ be two Leray-Hopf type weak solutions to system \eqref{eq1.1} with the same initial data $(Q_0,u_0)\in H^1(\R^3, S_0^3)\times L_\sigma^2(\R^3)$. 
		For some $2\leq p,q\leq +\infty$, if $(Q,u)$ satisfies
		\begin{equation}\label{w-k-xi}
		\Delta Q\in L^q(0,T;L^p(\R^3)) \quad\text{and}\quad \nabla u\in L^q(0,T;L^p(\R^3)),\quad \dfrac{2}{q}+\dfrac{3}{p}\leq \dfrac{3}{2},
		\end{equation}
		then $(Q,u)\equiv (R,v)$ on $[0,T]$.
	\end{theorem}
\begin{remark}
	Note that, once $\xi\ne 0$, the uniqueness criteria imposed on $(\Delta Q,\nabla u)$ in Theorem~\ref{thm1.2}
	%is only valid for $2\leq p\leq 6$, which
	differs from that of the standard Navier-Stokes equations \cite{nabla u} and the corotational case \cite{weak-Q-tensor}.
	Technically, this is determined by the highest order nonlinear terms in \eqref{eq1.1}.
	%Although our result in Theorem~\ref{thm1.2} improves that of \cite{weak-Q-tensor},
	However, it is unknown whether weak-strong uniqueness holds in any regime other than that illustrated in Figure \ref{fig1}. ‌In addition, one may expect the original Serrin-type condition for $(\nabla Q,u)$ instead of $(\Delta Q, \nabla u)$.
	%apart from the regime illustrated in Figure \ref{fig1}, we do not know whether there exist new regime allow the weak-strong uniqueness holds.
	%we can find any new areas that would allow our conclusion to hold true.
	%we do not know whether these conditions are optimal for achieving the weak-strong uniqueness of system \eqref{eq1.1}.
	%in the case of $\xi=0$.
	For these problems, we shall pursue them in a forthcoming paper.
	
	%			\begin{center}
		%				\begin{tblr}{colspec={l|l|l},rowspec={|[1pt,umons-turquoise]Q|[0.5pt,umons-turquoise]Q|[0.5pt,umons-turquoise]Q|[1pt,umons-turquoise]}}
			%					& passive & constant activity\\
			%					$\xi=0$ & $\nabla\cdot (Q\Delta Q)$, $Q\nabla u$ & $\nabla\cdot (Q\Delta Q)$, $Q\nabla u$ \\
			%					$\xi\ne0$ & $\nabla\cdot (Q{\rm Tr}(Q\Delta Q))$, $Q{\rm Tr}(Q\nabla u)$ & $\nabla\cdot (Q{\rm Tr}(Q\Delta Q))$, $Q{\rm Tr}(Q\nabla u)$  \\
			%					%\caption{Nonlinearity by $\xi$}
			%				\end{tblr}
		%			\end{center}
\end{remark}

     \begin{figure}[h]
    	\centering
    	\begin{tikzpicture}[scale=2]
    		\draw[->] (0,0) -- (4.8,0) node[above] {\footnotesize time scaling $1/q$}; 
    		\draw[->] (0,0) -- (0,2.5) node[above] {\footnotesize space scaling $1/p$};
    		\draw[->, blue] (1.68,1.54) to (1.5,1.34) node[right] {};
    		\node at (-0.05,-0.25) {\footnotesize$L_{t,x}^\infty$};
    		\node at (2.01,1.56) {~~~~~\textcolor{blue}{\footnotesize$\frac{2}{q}+\frac{3}{p}=2$}};
    		\draw[->, red] (2.54,0.48) to (2.35,0.35) node[right] {};
    		\node at (2.75,0.48) {~~~~~~~~~~\textcolor{red}{\footnotesize$\frac{2}{q}+\frac{3}{p}=\frac{3}{2}$}};
    		\tikzset{knop/.style={circle,minimum size=2,inner sep=0pt,draw,fill=white}}
    		\tikzset{knopp/.style={circle,minimum size=2,inner sep=0pt,draw,fill=red}}
    		\tikzset{knopb/.style={circle,minimum size=2,inner sep=0pt,draw,fill=bblue}}
    		\tikzset{knopv/.style={circle,minimum size=2,inner sep=0pt,draw,fill=violet}}
    		\node[knop] (p1) at (0,1) {} ;
    		%\node[knopp] (p11) at (0,1) {} ;
    		\node[knop] (p2) at (2,0) {} ;
    		\node[knop] (p3) at (0,1.5) {} ;
    		\node[knopp] (p31) at (0,1.5) {} ;
    		\node[knop] (p4) at (0,2) {} ;
    		\node[knop] (p5) at (4,0) {} ;
    		\node[knop] (p6) at (1,1.5) {} ;
    		\node[knopb] (p61) at (1,1.5) {} ;
    		\node[knop] (p7) at (2,1) {} ;
    		\node[knopb] (p71) at (2,1) {} ;
    		%\node[knop] (p8) at (1,1) {} ;
    		%\node[knopp] (p81) at (1,1) {} ;
    		\node[knopp] (pk2) at (2,0.5) {} ;
    		%\node[knopp] (pk1) at (0,0.5) {} ;
    		\node[knopp] (pkk2) at (2,0) {} ;
    		\node[knopp] (pkk1) at (0,0) {} ;
    		\node[knop] (p9) at (1,0) {} ;
    		\node[knop] (p0) at (3,0) {} ;
    		\draw[gray, dashed] (p61.center)--(p3.center);
    		\draw[gray, dashed] (p9.center)--(p6.center);
    		\draw[gray, dashed] (p1.center)--(p7.center);
    		\draw[gray, dashed] (p2.center)--(p7.center);
    		\begin{scope}[on background layer]
    			%- loop is not a problem1
    			%\path [fill=bblue,draw=bblue,thin] (p1.center) to (p71.center) to (p61.center) to (p3.center) to (p1.center);
    			\path [fill=bblue,draw=bblue,thin] (pkk1.center) to (pkk2.center) to (p71.center) to (p61.center) to (p3.center) to (pkk1.center);
    			\path [fill=blue,draw=blue,dashed] (p5.center) to  (p4.center);
    			% line \xi\ne 0
    			\path [fill=red,draw=red,dashed] (p0.center) to  (p3.center);
    			\path [fill=blue,draw=blue,thin] (p71.center) to  (p61.center);
    			\path [draw=red,thin,pattern={Lines[angle=25,distance={5pt}]},pattern color=red] (pkk1.center) to (pkk2.center) to (pk2.center) to (p31.center) to (pkk1.center);
    		\end{scope}
    		\path [fill=bblue,draw=bblue,thin] (3.4,2.2) rectangle (3.7,2.3);
    		\path [draw=red,thin,pattern={Lines[angle=25,distance={2pt}]},pattern color=red] (3.4,2) rectangle (3.7,2.1);
    		%\path [fill=oran,draw=oran,thin] (3.4,1.8) rectangle (3.7,1.9);
    		\draw[red,dashed] (0,0)--(0,1.5);
    		\draw[red,dashed] (0,0)--(2,0);
    		%\draw[blue,thin] (0,1)--(1,1);
    		%\node[knopp] (p11) at (0,1) {} ;
    		\node[knopp] (p31) at (0,1.5) {} ;
    		\node at (-0.4,1) {\footnotesize$L_t^\infty L_x^3$};
    		\node at (-0.4,1.5) {\footnotesize$L_t^\infty L_x^2$};
    		%\node at (-0.4,0.5) {\footnotesize$L_t^\infty L_x^6$};
    		\node at (1.16,1.7) {\footnotesize$L_t^4L_x^{2}$};
    		\node at (1.25,-0.25) {\footnotesize$L_t^4L_x^{\infty}$};
    		\node at (2.3,1.1) {\footnotesize$L_t^2L_x^{3}$};
    		\node at (2.1,-0.25) {\footnotesize$L_t^2L_x^{\infty}$};
    		\node at (2.3,0.6) {\footnotesize$L_t^2L_x^{6}$};
    		%\node at (4,2.25) {\scriptsize$\xi=0$};
    		\node at (4.21,2.25) {\scriptsize$\xi=0$~(\cite{weak-Q-tensor})};
    		\node at (4.03,2.05) {\scriptsize$\xi\ne 0$};
    		%\node at (4.21,1.85) {\scriptsize$\xi=0$~(new)};
    		%    		\node at (3.52,1.14) {~~~~~~~~~~~~~~~~~~~~\tiny$L_t^2H_x^1\hookrightarrow L_t^2L_x^{2n/(n-2)}$};
    	\end{tikzpicture}
    	%\vskip -0.4em
    	\caption{Different subclasses of the space $L^q(0,T;L^p(\R^3))$.
    		%The Leray–Hopf solutions belong to the blue region characterized by 
    		According to \eqref{eq1.8} and the interpolation argument as in the proof of Theorem~\ref{thm1.2},
    		the criterion for weak-strong uniqueness of system \eqref{eq1.1} with $\xi=0$ is given by the blue region.
    		%defined by \eqref{eq1.8}.
    		For the case of $\xi\ne 0$, the condition obtained in Theorem~\ref{thm1.2} is defined by \eqref{w-k-xi}, as indicated by the red shaded region. 
    		%Note that our result also includes the corotational case, hence the orange region can be regarded as a new condition of weak-strong uniqueness for the simplified case of $\xi=0$.
    		%corrector used in the proof of Theorem 4 to localize a self-similar solution belongs to the space XT defined by (19), hence
    		%where solutions found belong to the red shaded region
    	}
    	\label{fig1}
    \end{figure}

	\begin{remark}
		As shown in Figure \ref{fig1}, uniqueness criteria for the case $\xi\ne0$ is much stronger than that of $\xi=0$, and is a sub-regime of the corotational case.
		%The essential difference between
		In particular, in \cite{weak-Q-tensor,weak-strong-3d}, the condition \eqref{eq1.8} is used to achieve the estimates for some trilinear terms. For example, one may expect that
		%one can estimate the integral as follows:
		\[
		\left| \int_0^T \left(\Phi\Delta\Phi:\nabla\psi \right) ds \right|\lesssim
		\|\Phi\|_{L_t^\infty(H^1)}^{\frac{2p-3}{p}}\|\Delta \Phi\|_{L_t^{\frac{2p}{2p-3}}(L^p)}\|\Phi\|_{L_t^2(H^2)}^{\frac{3-p}{p}}\|\nabla\psi\|_{L_t^2(L^2)}.
		\]
		However, when $\xi\ne 0$,
		we cannot handle
		%several new nonlinear terms (such as $Q{\rm Tr}(Q\Delta Q)$)
		the new quartic terms arising from $Q{\rm Tr}(Q\Delta Q)$ and $Q{\rm Tr}(Q\nabla u)$ under the same regularity assumptions. Indeed, one has an estimate of the following type:
		\[
		\left| \int_0^T \left(\Phi^2\Delta\Phi:\nabla\psi \right) ds \right|\lesssim
		\|\Phi\|_{L_t^\infty(H^1)}^{\frac{5p-6}{2p}}\|\Delta \Phi\|_{L_t^{\frac{4p}{3p-6}}(L^p)}\|\Phi\|_{L_t^2(H^2)}^{\frac{6-p}{2p}}\|\nabla\psi\|_{L_t^2(L^2)}.
		\]
		This fact motivated us to impose the new condition \eqref{eq:1.5}, and it is the most important novelty of this work.
	\end{remark}
%	\begin{remark}
%		Actually, one may expect the original Serrin-type condition for $(\nabla Q,u)$ instead of $(\Delta Q, \nabla u)$. Unfortunately, it seems hard, at least for our method. Heuristically, when estimating $\int_0^T \left(Q\Delta Q:\nabla u \right) ds$,
%		we need a condition $\nabla Q\in L_t^q(L^p)$ to ensure that $Q\in L_t^\infty(L^\infty)$.
%		%the possible condition is $Q\in L_t^\infty(L^\infty)$
%	\end{remark}
The second part of this paper focuses on 
%The next purpose of this paper is to investigate
the existence of solutions with higher regularity to system \eqref{eq1.1} for arbirtray $\xi\in\R$. Because it is natural to ask whether one might get a similar result to that of \cite{Qtensor11} in $\R^2$.
%As already pointed out by [52],
The main difficulty in three dimensional case is due to the fact that
we cannot apply the sharp logarithmic Sobolev embedding of $H^{1+\epsilon}$ in $L^\infty$ to obtain the estimates on $\|Q(t,\cdot)\|_{W^{1,\infty}}$ and $\|u(t,\cdot)\|_{L^{\infty}}$, which allow us to close the higher regularity estimates of system \eqref{eq1.1}. Later on,
inspired by the work by Danchin \cite{Danchin06} for the density-dependent incompressible Navier-Stokes equations,
the authors in \cite{xi-all,Q-unique-3d}
utilize the maximal regularities of Stokes and parabolic operators to prove the existence of strong solutions for system \eqref{eq1.1}. We also remark that, this method has been successfully used to study the Ericksen-Leslie models of liquid crystals; see \cite{EL-cmp,EL-strong-jde,IEL-strong} and references therein.

Motivated by \cite{active-limit,ALC-decay}, we design an approximation system and apply the refined commutator estimates to prove the existence and uniqueness of local solution $(Q,u)$ to system \eqref{eq1.1} in $H^{s+1}\times H^s$ with $s\geq 2$ for the case $\xi\in\R$.
Note that the initial data considered here is regular enough so that one can solve the aforementioned difficulties on estimates by the standard embeddings.
%Due to the initial data considered here is sufficiently regular, one can allow us to get the aforementioned estimates by the standard embeddings.
%By developing a weighted Chemin-Lerner framework combined with a refined energy argument
%which will help to achieve the desired a priori estimate of the solution in $H^s$-framework.
%Moreover, we
%
As for the global regularity, we need to further assume that $a>0$. In this case, the term $-aQ$ in the equation of $Q$ can be regarded as a damping term, which enables us to achieve an energy inequality of the form
\[
\dfrac{d}{dt}\mathcal{E}(t)+ \mathcal{D}(t)
\leq C \sum_{m=1}^4\mathcal{E}^{\frac{m}{2}}(t) \mathcal{D}(t).
\]
where $\mathcal{E}(t)$ and $\mathcal{D}(t)$ are the kinetic energy and dissipation functions, respectively.
Then, by the standard continuation argument, we get our third main result regarding
the existence and uniqueness of global-in-time solution to system \eqref{eq1.1} with small initial data for arbirtray $\xi\in\R$.
\begin{theorem}\label{thm3.1}
	Let $\xi\in\R$, $a>0$ and $(Q_0,u_0)\in H^{s+1}\times H^s$ for any integer $s\geq 2$. If there exists a small real number $\delta_0$ such that $\|Q_0\|_{H^{s+1}}^2 + \|u_0\|_{H^s}^2\leq \delta_0 $, then the Cauchy problem of system \eqref{eq1.1} admits a unique global solution $(Q,u)$ satisfying
	\begin{equation}
		Q\in L^\infty\left(  [0,+\infty);H^{s+1} \right)\cap L^2\left(  [0,+\infty);H^{s+2} \right)~\text{and}~u\in L^\infty\left( [0,+\infty);H^s \right)\cap L^2\left(  [0,+\infty);H^{s+1} \right).
	\end{equation}
	Moreover, $(Q,u)$ satisfies the following energy inequality
	\begin{equation}
		\|u(t)\|_{H^s}^2 + \|Q(t)\|_{H^{s+1}}^2 
		+\int_0^t \|\nabla u(\tau)\|_{H^{s}}^2 + \|  Q(\tau)\|_{H^{s+1}}^2 + \|\Delta
		Q(\tau)\|_{H^{s}}^2 d\tau\leq C,
	\end{equation}
	for all $t\geq 0$,
	where $C$ is a positive constant depending on $\delta_0$, $s$, $a$, $b$, $c$, $\mu$, $|\xi|$, $L$ and $\Gamma$.
\end{theorem}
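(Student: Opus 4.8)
The plan is the classical one for small-data global results: first build a local-in-time solution in the class $H^{s+1}\times H^s$, then derive an a priori energy estimate which, thanks to the damping term $-aQ$ available when $a>0$, closes \emph{globally} for small data, and finally conclude by a standard continuation argument; uniqueness will be inherited from Theorem~\ref{thm1.2}. For the local theory I would regularize \eqref{eq1.1} (in the spirit of \cite{active-limit,ALC-decay}) by mollifying the transport terms and the highest-order coupling — the $Q\Delta Q$-type terms in $\tau,\sigma$ and the $\nabla u$-terms in $S$ — in a way that preserves the cancellation structure of the system, so that the regularized problem is globally solvable in $C([0,\infty);H^{s+1}\times H^s)$ for each fixed parameter. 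Energy estimates in $H^{s+1}\times H^s$, together with the refined commutator (Kato--Ponce/Moser-type) estimates needed for the quasilinear terms, then produce bounds that are uniform on a time interval $[0,T_0]$ with $T_0=T_0(\|Q_0\|_{H^{s+1}},\|u_0\|_{H^s})$; passing to the limit with the help of Aubin--Lions compactness yields a local solution with $Q\in L^\infty_tH^{s+1}\cap L^2_tH^{s+2}$ and $u\in L^\infty_tH^s\cap L^2_tH^{s+1}$. Uniqueness of this solution follows from Theorem~\ref{thm1.2}: for $s\ge 2$ one has $\Delta Q\in L^2_tH^s\hookrightarrow L^2_tL^6$ and $\nabla u\in L^2_tH^s\hookrightarrow L^2_tL^6$, so \eqref{eq:1.5} holds with $(q,p)=(2,6)$.

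The heart of the argument is an a priori estimate for the functionals $\mathcal{E}(t):=\|u\|_{H^s}^2+\|Q\|_{H^{s+1}}^2$ and $\mathcal{D}(t):=\|\nabla u\|_{H^s}^2+\|Q\|_{H^{s+1}}^2+\|\Delta Q\|_{H^s}^2$. Applying $\partial^\alpha$ with $|\alpha|\le s$ to the $u$-equation and pairing with $\partial^\alpha u$, and $\partial^\alpha$ with $|\alpha|\le s+1$ to the $Q$-equation and pairing with $\partial^\alpha Q$, the linear dissipative parts produce $-\mu\|\nabla u\|_{H^s}^2$, $-\Gamma L^2\|\Delta Q\|_{H^s}^2$ and — this is precisely where $a>0$ enters — $-a\Gamma\|Q\|_{H^{s+1}}^2$, so that (in contrast to the corotational case) the \emph{full} norm $\|Q\|_{H^{s+1}}^2$, not merely its top-order seminorm, sits in the dissipation $\mathcal{D}$. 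The nonlinear terms are handled via the cancellations that govern the basic energy balance: the top-order part of $\nabla\cdot\sigma=\nabla\cdot(Q\Delta Q-\Delta Q\,Q)$ tested against $\partial^\alpha u$ cancels against $\Omega Q-Q\Omega$ in $S(\nabla u,Q)$ tested against the $L\Delta Q$-part of $\partial^\alpha Q$ — this disposes of the awkward three-derivative term $Q\,\nabla\Delta Q$; the Ericksen stress $-L\nabla Q\odot\nabla Q$ cancels against $u\cdot\nabla Q$; and, for $\xi\neq 0$, the terms $\xi[(Q+\tfrac13\mathbb{I}_3)D+D(Q+\tfrac13\mathbb{I}_3)]$ and $-2\xi(Q+\tfrac13\mathbb{I}_3){\rm Tr}(Q\nabla u)$ in $S$ cancel the pieces $-\xi[(Q+\tfrac13\mathbb{I}_3)H+H(Q+\tfrac13\mathbb{I}_3)]$ and $2\xi(Q+\tfrac13\mathbb{I}_3){\rm Tr}(QH)$ of $\tau$. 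What survives are commutators and lower-order products; estimating them with Sobolev embeddings ($H^1\hookrightarrow L^6$, $H^2\hookrightarrow L^\infty$ in $\R^3$), Gagliardo--Nirenberg interpolation and Young's inequality — and using that $\mathcal{D}$ controls $\|\nabla u\|_{H^s}$, $\|\Delta Q\|_{H^s}$ and $\|Q\|_{H^{s+1}}$ in full, so that spare factors of $Q$ can be charged to $\mathcal{D}^{1/2}$ rather than $\mathcal{E}^{1/2}$ — each remainder is bounded by $C\,\mathcal{E}^{m/2}\mathcal{D}$ with $1\le m\le 4$, the value $m=4$ coming from the quartic/quintic terms generated by ${\rm Tr}(Q\nabla u)$ and ${\rm Tr}(QH)$ (with $H\ni L\Delta Q$) that are absent when $\xi=0$. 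This yields
\[
\frac{d}{dt}\mathcal{E}(t)+\mathcal{D}(t)\le C\sum_{m=1}^4\mathcal{E}^{m/2}(t)\,\mathcal{D}(t).
\]

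With this differential inequality the continuation is routine. Choose $\delta_0$ so small that $C\sum_{m=1}^4(2\delta_0)^{m/2}\le\tfrac12$, and set $T^\ast=\sup\{T>0:\sup_{[0,T]}\mathcal{E}\le 2\delta_0\}$, which is positive by local existence and continuity. On $[0,T^\ast)$ the inequality above gives $\frac{d}{dt}\mathcal{E}+\tfrac12\mathcal{D}\le 0$, hence $\mathcal{E}(t)\le\mathcal{E}(0)\le\delta_0<2\delta_0$ and $\mathcal{E}(t)+\tfrac12\int_0^t\mathcal{D}(s)\,ds\le\delta_0$ for all $t<T^\ast$; since the $H^{s+1}\times H^s$ norm therefore remains strictly below the threshold, the local solution can always be continued, forcing $T^\ast=+\infty$. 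This produces the global solution together with the stated energy inequality, with $C$ depending only on $\delta_0,s,a,b,c,\mu,|\xi|,L,\Gamma$.

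The main obstacle I anticipate is the a priori estimate itself — concretely, isolating the cancellations for the $\xi$-dependent stress and for the top-order part of $\nabla\cdot\sigma$, and designing the approximation scheme so that these cancellations survive mollification, so that the genuinely new nonlinearities (the quartic and quintic terms carrying two derivatives on $Q$, namely those from $Q\,{\rm Tr}(Q\Delta Q)$ and $Q\,{\rm Tr}(Q\nabla u)$) can be absorbed into $\mathcal{D}$ by the $a>0$ damping and the smallness of $\mathcal{E}$.
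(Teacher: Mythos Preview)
Your overall architecture matches the paper's: local existence via an approximation scheme, an a priori inequality of the form $\tfrac{d}{dt}\mathcal{E}+\mathcal{D}\le C\sum_{m=1}^4\mathcal{E}^{m/2}\mathcal{D}$, and a continuation argument. The uniqueness via Theorem~\ref{thm1.2} with $(q,p)=(2,6)$ is also fine. However, there is a genuine gap in the a priori estimate, precisely at the point you flag as ``the main obstacle'': the $\xi$-dependent cancellations do \emph{not} work with your choice of test function.

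You pair the $Q$-equation with $\partial^\alpha Q$ (equivalently, you use the plain $H^{s+1}$ inner product). The cancellation you invoke between the $\xi$-terms of $S(\nabla u,Q)$ and those of $\tau$ holds in the basic energy law only because the $Q$-equation is tested against $-H$; at the differentiated level this means one must test against $a\partial^kQ-L\partial^k\Delta Q$, i.e.\ against the linear part of $-\partial^kH$. With your unit-weight pairing the linear $\xi$-pieces $\tfrac{2\xi}{3}D$ in $S$ and $-\tfrac{2\xi}{3}H$ in $\tau$ leave behind a \emph{quadratic} remainder of the form $c(a,L)\,\xi\,(\partial^k Q:\partial^k\nabla u)$ (at various levels $|k|\le s+1$). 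Since $\|Q\|_{H^{s+1}}^2$ and $\|\nabla u\|_{H^s}^2$ both sit in $\mathcal{D}$, this remainder is bounded by $C|\xi|\,\mathcal{D}$, not by $C\mathcal{E}^{m/2}\mathcal{D}$; it can be absorbed only if $|\xi|$ is small, which defeats the purpose of the theorem. (Alternatively, bounding it by $\mathcal{E}^{1/2}\mathcal{D}^{1/2}$ and applying Young produces a $C\mathcal{E}$ term, which destroys the global bound.)

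The paper's fix is exactly to choose the weighted energy $\mathcal{E}(t)=\|u\|_{H^s}^2+a\|Q\|_{H^s}^2+L\|\nabla Q\|_{H^s}^2$, i.e.\ to test $\partial^k$ of the $Q$-equation against $a\partial^kQ-L\partial^k\Delta Q$. Then
\[
\tfrac{2\xi}{3}\big(\partial^kD:\,a\partial^kQ-L\partial^k\Delta Q\big)+\tfrac{2\xi}{3}\big(L\partial^k\Delta Q-a\partial^kQ:\partial^k\nabla u\big)=0
\]
exactly, for every $k$, and the remaining $\xi$-terms are genuinely cubic or higher and fit into $\sum_{m=1}^4\mathcal{E}^{m/2}\mathcal{D}$. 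The paper explicitly isolates this point (Section~\ref{sec5}, the discussion of the term $\tfrac{2\xi(M-a)}{3}(\partial^kQ:\partial^k\nabla u)$ and the choice $M=a$). So your plan goes through once you replace the naive $H^{s+1}$ pairing on $Q$ by this weighted one; without it the inequality you write down does not hold for arbitrary $\xi$.
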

\begin{remark}
	A few remarks are in order:
	\begin{enumerate}[(i).]
		\item
		In the work \cite{ALC-decay}, the damping term $-aQ$ plays an important role in performing energy estimates. As a matter of fact‌, we use the dissipation effect of $Q$ to overcome the difficulty caused by the linear active term.
		But in this paper, non-zero parameter $\xi$ makes the study of system \eqref{eq1.1} more complicated. To overcome this complication, we have to refine the argument from \cite{ALC-decay}; see Section~\ref{sec5} below for the details. Moreover, the global regularity remains unclear for the case of $a\leq 0$.
		%When $a>0$, the damping term $-aQ$ plays an important role in performing energy estimates. But for the case of $a\leq 0$, the global regularity remains unclear.
		\item 
		Similar to \cite{ALC-decay}, one can see that the assumption on Sobolev index $s$ in Theorem~\ref{thm3.1} is weaker than that of \cite{active-limit} where integer $s> \frac{d}{2}+1$ with $d=2,3$.
		%(in which their analysis method motivated us to show .)
		This improvement is due to the refined commutator estimates proposed in our recent work \cite{ALC-decay}, which will help to achieve the desired a priori estimate of the solutions in $H^s$-framework; see the proof of Proposition~\ref{prop-a1} in Appendix~\ref{ap1} below.
		%In fact, if we study the case where initial data is small, the Sobolev index $s$ could be greater than or equal to $2$.
		%Moreover, we remark that initial condition here is much stronger than that of \cite{xi-all,Q-unique-3d}, where strong global well-posedness of system \eqref{eq1.1} in a smooth bounded domain were established by using the maximal regularity of the Stokes operator and parabolic operator.
%		\item 
%		For what concerns the local existence result for system \eqref{eq1.1} with any large initial data in $H^s$-framework, we assume that $s\geq 3$ so that $\|\nabla u^n\|_{L^\infty}$ and $\|\Delta Q^n\|_{L^\infty}$ can be bounded by $E_n^{\frac{1}{2}}(t)$ given in \eqref{eq-f1}. Otherwise, we must impose a smallness condition on the initial data. Indeed, one can carry out a similar argument to deduce some a priori estimate for the case of $s=2$, but we will get
		\item 
		When comparing our result to that of \cite{Qtensor11}, it must be pointed out that
		%Comparing with the result in ,
		the smallness assumption on the physical parameter $\xi$ has been removed by having good initial data, i.e., $(Q_0,u_0)\in H^{s+1}\times H^s$ with integer $s\geq 2$. Furthermore, we would like to mention the recent work by Hieber, Hussein and Wrona \cite{xi-all}, where the existence theory of this model in a bounded domain $\mathcal{O}\subset \R^3$ with smooth boundary was also established without any restriction on $\xi$. The associated initial data can be regarded as the interpolation space as follows:
		\begin{align*}
			Q_0\in B_{2,p}^{3-\frac{2}{p}}=\left( H^1,H^3 \right)_{1-\frac{1}{p},p}\,\,{and}\,\,
			u_0\in B_{2,p}^{2-\frac{2}{p}}\cap L_\sigma^2\,\,{with}\,\,B_{2,p}^{2-\frac{2}{p}}=\left( L^2,H^2 \right)_{1-\frac{1}{p},p},
		\end{align*}
		where $p>4$. Note that $(Q_0,u_0)\hookrightarrow B_{2,p}^{3-\frac{2}{p}}\times B_{2,p}^{2-\frac{2}{p}} \hookrightarrow C^1\times C^0$, the common point with the initial data herein is that both belong to the class $W^{1,\infty}\times L^\infty$.
		%the initial assumption provided by the existence theory established in \cite{xi-all} for system \eqref{eq1.1} is 
		%\item 
		%our result is obtained without any smallness assumption on the physical parameter $\xi$.
		%In other words, the well-posedness for initial data in $H^{s+1}\times H^s$ with $0<s<3$ is unclear.
%		This problem will be explored in our future work.
		\item Note that one has the following interpolation inequality
		\[
		\int_0^T\|\nabla u\|_{L^p}^q dt\lesssim \|\nabla u\|_{L_t^\infty (L^2)}^{q-2} \int_0^T\|\nabla u\|_{H^1}^2 dt\quad\text{\rm with}\quad \dfrac{2}{q}+\dfrac{3}{p}=\dfrac{3}{2}
		\]
		for $\nabla u \in L^\infty(0,T;L^2(\R^3))  \cap L^2(0,T;H^1(\R^3))$.
		%So, the existence of strong solution in the sense of Definition~\ref{def1.1}
		Thus our result also implies the existence of strong solutions in the sense of Definition~\ref{def1.2}, which obviously satisfies the energy equality.
	\end{enumerate}
\end{remark}

The rest of this paper is organized as follows. In Section~\ref{Sec.2}, we collect the notations and derive some useful lemmas.
In Section~\ref{Sec.3}, we establish the existence of energy equality for the Leray-Hopf type weak solutions to system \eqref{eq1.1} under the new regularity assumption \eqref{eq:1.5}. Thereafter, Section~\ref{sec4} is devoted to proving Theorem~\ref{thm1.2}.
In Section~\ref{sec5}, we present the a priori estimates for \eqref{eq1.1}, and Theorem~\ref{thm3.1} follows by the standard continuity method.
For the sake of completeness, the local well-posedness of system \eqref{eq1.1} is included in Appendix~\ref{ap1}.
Finally, we recall some technical inequalities and provide some important preliminary estimates in Appendix~\ref{ap2}.

\section{Preliminaries}\label{Sec.2}
In this section, we first introduce some notations used throughout the paper.
For the sake of brevity, we will write $X\lesssim Y$ to indicate $X\leq CY$ with some generic positive constant $C$. Similarly, the notation $X\sim Y$ means that there exists a positive constant $C$ such that
$C^{-1}Y\leq X\leq CY$. For $1\leq p\leq \infty$, $L^p(\R^3)$ denotes the usual Lebesgue space of all real-valued $L^p$ integrable functions, and endowed with norm $\|\cdot\|_{L^p}$.
Meanwhile, $\|\cdot\|_{H^s}$ stands for the norm on the customary Sobolev spaces $H^s(\R^3)$.
As usual, we do not distinguish between scalar, vector and tensor valued functions.
Then, the space $L_\sigma^p(\R^3)$ is defined by
\[
L_\sigma^p(\R^3):=\left\lbrace u\in L^p(\R^3): {\rm div}~u=0\right\rbrace
\]
and $S_0^3\subset \mathbb{M}^{3\times 3}$ denotes the space of $Q$-tensor in $3$-dimension, i.e.,
\[
S_0^3:=\left\lbrace  Q\in\mathbb{M}^{3\times 3}: Q_{\alpha\beta}=Q_{\beta\alpha},\quad {\rm Tr}(Q)=0,\quad \alpha,\beta=1,2,3  \right\rbrace,
\]
where $\mathbb{M}^{3\times 3}$ refers to the space of all real, $3\times 3$ matrix-valued functions.
For $u\in L^q(0,T;L^p(\R^3))$, we write
\begin{numcases}{\|u\|_{L^q(0,T;L^p(\R^3))}:=}
	\left( \int_0^t\|u(t)\|_{L^p}^q dt \right)^{\frac{1}{q}},&\text{if $1\leq q<\infty$,}\nonumber\\
	{\rm ess}\sup_{t\in[0,T]}\|u(t)\|_{L^p},&\text{if $ q=\infty$.}\nonumber
\end{numcases}
Moreover, the norm of $L^q(0,T;L^p(\R^3))$ is simplified by $\|\cdot\|_{L_t^q(L^p)}$.

For any multi-index $\alpha=\left(\alpha_1,\alpha_2,\cdots,\alpha_d \right)\in \mathbb{N}^d$ with $|\alpha|=\sum_{j=1}^d \alpha_j$, the partial derivative of order $\alpha$ is defined by
\[
\partial^{\alpha}f(x) = \dfrac{\partial^{|\alpha|} f(x) }{\partial_{x_1}^{\alpha_1}\partial_{x_2}^{\alpha_2}\cdots \partial_{x_d}^{\alpha_d}}.
\]
In general, $(\cdot,\cdot)$ denotes the inner product for real vector-valued functions in $L^2$, while for two real matrix-valued functions $A, B\in L^2(\R^3;S_0^3)$, their inner product is given by
\[
(A:B):=\int_{\R^3}{\rm tr}(AB)dx.
\]
Moreover, we use the Frobenius norm of the matrix-valued function $Q\in \mathcal{S}_0^3$, i.e.,
\[
|Q|:=\sqrt{{\rm tr}(Q^2)}=\sqrt{Q_{\alpha\beta}Q_{\alpha\beta}}.
\]
On the other hand, we also introduce the following commutators of $A$ and $B$ for any functions $f, g$:
\[
\begin{split}
	[A, B]f &= ABf -BAf,\\
	[A, B]_{\rm div}f &= AB\cdot\nabla f -B\cdot\nabla (A f),\\
	[A, (f,g)]_{\pm} &= A(fg)-(Af)g \pm [A,g]f.
\end{split}
\]
%Moreover, $C$ represents the generic positive constant, which is different line to line.

%Let $i=\sqrt{-1}$,
Let $\eta\in C_c^\infty(\R)$ be a real-valued, non-negative even function such that
\[
\int_{\R}\eta (\tau)d\tau= 1\quad\text{and}\quad \supp f\subset [-1,1].
\]
Then, for any space–time function $f$, we define its (time) mollification function $f_\varepsilon$
%that plays an important role in our argument
by
\[
f_\varepsilon(s,x)= \int_0^t \eta_\varepsilon(s-\tau)f(\tau,x)d\tau,
\]
where $\eta_\varepsilon(s)=\varepsilon^{-1}\eta(s/\varepsilon)$ and $0<\varepsilon<t$. Note that, as in \cite{Galdi-book,serrin1963}, we have the following relevant properties:
\begin{enumerate}[(i).]
	\item If $f\in L^q(0,T;L^p)$ with $1\leq q<\infty$, then $f_\varepsilon \in C^\infty(0,T;L^p)$ and $f_\varepsilon\rightarrow f$ in $L_t^q(L^p)$ as $\varepsilon\rightarrow 0$.
	\item If $\{ f^i \}_{i=1}^{\infty}$ converges to $f$ in $L_t^q(L^p)$, then $(f^i)_\varepsilon \rightarrow f_\varepsilon$ in $L_t^q(L^p)$ as $i\rightarrow +\infty$.
\end{enumerate}

Next, we state the following property of Leray-Hopf type weak solutions to system \eqref{eq1.1}, which is well-known in the existing literature for Navier-Stokes equations. For the proof, one can refer to \cite{Galdi-book,serrin1963,weak-strong-3d}.
%and the precise definitions of solutions we want to deal with, then we compare our results with the ones in the existing literature. 
%The proof follows closely one of Serrin's \cite{serrin1963}.

\begin{prop}\label{prop2.1}
	Let $\xi\in\R$ and $(Q,u)$ be a Leray-Hopf type weak solution to system \eqref{eq1.1} with initial data $(Q_0,u_0)\in H^1\times L_\sigma^2$. Then $(Q,u)$ can be redefined on a set of zero Lebesgue measure in such a way that $(Q(t),u(t))\in H^1\times L^2$ for all $t\geq 0$ and satisfies the identities:
	%after suitable redefinition of $(Q,u)$ at a set of values of $t$ of one-demensional measure zero, we have
	\begin{equation}\label{eq-a1}
		\begin{split}
			\int_0^t ( Q:&\partial_t\phi )+\Gamma L (\Delta Q:\phi) +(Q:u\cdot\nabla \phi) -2\xi \left( \left[Q +\dfrac{\mathbb{I}_3}{3}\right] {\rm Tr}(Q\nabla u):\phi \right) ds\\
			&+ \int_0^t\left( (\xi  D+\Omega)\left[Q +\dfrac{\mathbb{I}_3}{3}\right] :\phi \right) + \left( \left[Q +\dfrac{\mathbb{I}_3}{3}\right] (\xi D-\Omega):\phi \right)ds\\
			=& \Gamma\int_0^t \left( aQ-b\left[Q^2-\dfrac{{\rm Tr}(Q^2)}{3}\mathbb{I}_3\right]+cQ{\rm Tr}(Q^2) :\phi \right)ds + (Q(t):\phi(t)) -(Q_0(x):\phi(0,x))
		\end{split}
	\end{equation}
	and
	\begin{equation}\label{eq-a2}
		\begin{split}
			&\int_0^t \left( u,\partial_t\psi \right)+(u,u\cdot\nabla \psi)-\mu (\nabla u,\nabla\psi)ds +L\int_0^t \left( \nabla Q \odot\nabla Q - Q\Delta Q+\Delta QQ :\nabla\psi \right)ds \\
			=& -\xi\int_0^t\left( H\left[Q +\dfrac{\mathbb{I}_3}{3}\right]+  \left[Q +\dfrac{\mathbb{I}_3}{3}\right]H: \nabla\psi \right)ds  + 2\xi\int_0^t\left(  \left[Q +\dfrac{\mathbb{I}_3}{3}\right]{\rm Tr}(QH): \nabla\psi \right)ds \\
			& +(u(t),\psi(t)) -(u_0(x),\psi(0,x)).
		\end{split}
	\end{equation}
	for all $t\geq 0$, and all $\phi\in C_c^\infty([0,\infty)\times \R^3;\mathcal{S}_0^3)$, $\psi\in C_c^\infty([0,\infty)\times \R^3;\R^3)$ with $\nabla\cdot \psi=0$.
\end{prop}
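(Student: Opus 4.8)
\textbf{Proof plan for Proposition~\ref{prop2.1}.}
The plan is to upgrade the distributional identities \eqref{eq1.2}--\eqref{eq1.3} from the definition of a Leray-Hopf type weak solution (which hold when tested against space-time test functions vanishing near $t=+\infty$) to the time-localized identities \eqref{eq-a1}--\eqref{eq-a2} valid for \emph{every} $t\ge 0$, at the cost of redefining $(Q,u)$ on a time-null set. The key observation is that, from condition (i) of Definition~\ref{def1} together with the equations, the distributional time derivatives $\partial_t Q$ and $\partial_t u$ (the latter after projecting onto divergence-free fields) lie in $L^1_{\mathrm{loc}}(\R_+; X)$ for suitable negative-order Sobolev spaces $X$: indeed $\Gamma H = \Gamma L\Delta Q - aQ + \ldots \in L^2_{\mathrm{loc}}(L^2)$, the transport term $u\cdot\nabla Q \in L^1_{\mathrm{loc}}(L^{3/2})$ by H\"older using $u\in L^\infty_tL^2\cap L^2_tL^6$ and $\nabla Q\in L^\infty_tL^2\cap L^2_tL^6$, and $S(\nabla u,Q)\in L^2_{\mathrm{loc}}(L^{3/2})$ similarly; for $u$ one writes $\partial_t u = \mathbb{P}(-u\cdot\nabla u + \mu\Delta u + \nabla\cdot(\tau+\sigma))$ and checks each piece lands in some $L^1_{\mathrm{loc}}(H^{-m})$. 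Hence $Q \in L^\infty_{\mathrm{loc}}(H^1)\cap W^{1,1}_{\mathrm{loc}}(H^{-m})$ and likewise for $u$. This is exactly the setting of the Lions--Magenes / Strauss lemma: such a function has a representative that is \emph{weakly continuous} in time with values in the high-regularity space ($H^1$ for $Q$, $L^2$ for $u$), and it is this representative we select. After this redefinition, $(Q(t),u(t))\in H^1\times L^2$ for all $t\ge0$ and $t\mapsto (Q(t):\phi)$, $t\mapsto (u(t),\psi)$ are genuinely continuous for fixed smooth $\phi,\psi$.

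The next step is to derive \eqref{eq-a1}--\eqref{eq-a2} for this representative. First I would fix $t_0>0$ and, given a test function $\phi$ that need not vanish at $t_0$, insert into \eqref{eq1.2} the admissible test functions $\phi\,\chi_\delta(s)$ where $\chi_\delta$ is a smooth cutoff equal to $1$ on $[0,t_0]$ and supported in $[0,t_0+\delta]$. Expanding $\partial_s(\phi\chi_\delta) = \chi_\delta\partial_s\phi + \phi\chi_\delta'$, all terms except the one carrying $\chi_\delta'$ converge as $\delta\to0$ to the corresponding integrals over $[0,t_0]$ by dominated convergence. The term $\int_{t_0}^{t_0+\delta} (Q:\phi)\,\chi_\delta'(s)\,ds$ converges to $-(Q(t_0):\phi(t_0))$ precisely because of the weak continuity of $Q$ established above --- this is where redefinition on the null set is used. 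Collecting terms yields \eqref{eq-a1}; the argument for \eqref{eq-a2} is identical, using $\psi\chi_\delta$ and the weak-in-$L^2$ continuity of $u$, and noting $\mathbb{P}$ is not needed because $\psi$ is already divergence-free so the pressure drops out. One should also check the identity at $t_0=0$, which is immediate since $\chi_\delta(0)=1$ recovers the initial-data term already present in \eqref{eq1.2}.

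Two technical points deserve care. The first --- and the main obstacle --- is justifying the weak-continuity step rigorously: one must verify that each nonlinear term, in particular the quadratic-in-$Q$-times-$\nabla u$ terms in $S(\nabla u,Q)$ and the cubic $\nabla Q\odot\nabla Q$, $Q\Delta Q$ terms in $\tau,\sigma$, together with the highest-order term $\Gamma L\Delta Q$, really produces a time derivative in some fixed $L^1_{\mathrm{loc}}(\R_+;H^{-m})$ with $m$ large but finite; this is a bookkeeping exercise in H\"older's inequality and Sobolev embeddings in $\R^3$ using only the regularity in (i), and it is the place where the argument could go wrong if some term fails to be locally integrable in time. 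The second point is that the selected representative is unique up to sets of measure zero and agrees a.e. with the original $(Q,u)$, so all of (i), (ii), (iii) of Definition~\ref{def1} are preserved; in particular the energy inequality \eqref{eq1.4}, being stated ``for all $t$'' already, is unaffected. I would remark that this is the standard argument for Navier-Stokes (as in \cite{Galdi-book,serrin1963}) and for the active liquid crystal system in \cite{weak-strong-3d}, and the only new input is checking the extra $\xi$-dependent terms fit the same framework, which they do because they are no worse than the $\xi=0$ terms in terms of which Sobolev spaces they occupy.
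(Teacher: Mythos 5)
Your proposal is correct and follows exactly the standard route that the paper itself defers to by citing \cite{Galdi-book,serrin1963,weak-strong-3d}: one checks that $\partial_t Q$ and $\mathbb{P}\partial_t u$ lie in $L^1_{\mathrm{loc}}(\R_+;H^{-m})$ using only the regularity in condition (i), invokes the Lions--Magenes/Strauss lemma to select a weakly continuous representative, and localizes the weak formulation in time with a cutoff $\chi_\delta$. The H\"older/Sobolev bookkeeping you flag as the main technical point does go through for all the $\xi$-dependent terms (e.g.\ $Q^2\nabla u$ and $Q^2\Delta Q$ land in $L^2_{\mathrm{loc}}(L^{6/5})\hookrightarrow L^2_{\mathrm{loc}}(H^{-1})$), so there is no gap.
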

In what follows, we always assume that the Leray-Hopf type weak solution of system \eqref{eq1.1} satisfies \eqref{eq-a1} and \eqref{eq-a2} for all $t\geq 0$ (in the sense of Proposition~\ref{prop2.1}). Moreover, one can also observe that, it follows from the above result that $u$, $Q$, $\nabla Q$ are continuous in the weak topology of $L^2$ (see \cite[Corollary~$3.2$]{shinbrot1974}).

Finally, we close this section by presenting the following three Lemmas that used to deal with some nonlinear terms in \eqref{eq1.1}, which are crucial to our proof of Theorem~\ref{thm1.2}.
\begin{lemma}\label{lem2.2}
	Let $\phi\in L^\infty(0,T;L^2(\R^3))\cap L^2(0,T;H^1(\R^3))$, $\varphi\in L^\infty(0,T;H^1(\R^3))\cap L^2(0,T;H^2(\R^3))$, $\nabla\psi\in L^2(0,T;L^2(\R^3))$. If $\phi$ and $\varphi$ also satisfies
	\[
	(\nabla \phi, \Delta \varphi)\in L^q(0,T;L^p(\R^3))\quad\text{with}\quad \dfrac{2}{q}+\dfrac{3}{p}=\dfrac{3}{2},\,\, 2\leq p\leq 6.
	\]
	Then, we have the estimates
	\begin{equation}
		\left| \int_{0}^{T}(\phi,\phi \cdot \nabla \psi) dt \right|\lesssim\|\phi\|_{L_t^\infty(L^2)}^{\frac{5p-6}{4p}} \|\phi\|_{L_t^2(H^1)}^{\frac{p+6}{4p}} \|\nabla \phi\|_{L_t^q(L^p)}^{\frac{1}{2}} \|\nabla \psi\|_{L_t^2(L^2)}
	\end{equation}
	and
	\begin{equation}
			\left| \int_0^T ( \varphi\Delta \varphi-\Delta \varphi\varphi :\nabla \psi )dt \right| \lesssim  \|\varphi\|_{L_t^\infty(H^1)}^{\frac{3p-6}{2p}} \|\Delta \varphi\|_{L_t^q(L^p)} \|\varphi\|_{L_t^2(H^2)}^{\frac{6-p}{2p}} \|\nabla \psi\|_{L_t^2(L^2)}.
	\end{equation}
\end{lemma}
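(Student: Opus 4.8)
The plan is to prove both estimates by the same recipe: (a) interpolate the function in the triple product between its $L^2$-type norm and a higher-norm in such a way that one factor lands exactly in the $L^q_t(L^p)$ scale appearing in the hypothesis, (b) use Hölder in space to split the spatial integral and then Hölder in time with the exponents dictated by the Ladyzhenskaya--Prodi--Serrin--type relation $\frac2q+\frac3p=\frac32$, and (c) sum up the exponents to verify they match the stated powers. For the first inequality, write the trilinear term as $\int (\phi\cdot\nabla\psi):\phi\,dx$ and estimate $\|\phi\,\nabla\psi\,\phi\|_{L^1_x}\le\|\nabla\psi\|_{L^2_x}\|\phi\|_{L^{a}_x}\|\phi\|_{L^{b}_x}$ with $\frac1a+\frac1b=\frac12$. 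The natural choice is to put one copy of $\phi$ into $L^p$ via the Gagliardo--Nirenberg inequality $\|\phi\|_{L^p}\lesssim\|\nabla\phi\|_{L^p}^{\theta}\|\phi\|_{L^2}^{1-\theta}$-type bounds (using that $\nabla\phi$, not $\Delta\phi$, is the controlled quantity here), and the other copy of $\phi$ into an intermediate $L^r$ space interpolated between $L^2$ and $L^6$ (the endpoint of the $H^1$ embedding). Then Hölder in $t$ with exponents $(\infty,\,q',\,2,\,\dots)$ chosen so that $\nabla\phi$ is paired with $L^q_t$ and $\phi$ with its $L^\infty_t(L^2)$ and $L^2_t(H^1)$ norms; the scaling relation guarantees the time exponents are admissible, and a bookkeeping computation gives the powers $\frac{5p-6}{4p},\ \frac{p+6}{4p},\ \frac12$.

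For the second inequality the structure is cleaner because $\Delta\varphi$ is exactly the object appearing in the hypothesis: estimate $\|\varphi\Delta\varphi\|_{L^1_x}$ (and the transposed term $\|\Delta\varphi\,\varphi\|_{L^1_x}$, identical by symmetry) by $\|\nabla\psi\|_{L^2_x}\|\Delta\varphi\|_{L^p_x}\|\varphi\|_{L^{\frac{2p}{p-2}}_x}$ via Hölder with $\frac1p+\frac{p-2}{2p}+\frac12=1$. Then bound $\|\varphi\|_{L^{2p/(p-2)}_x}$ by Gagliardo--Nirenberg between $H^1$ and $H^2$: since $2\le p\le 6$ the exponent $\frac{2p}{p-2}$ ranges over $[3,\infty)$, which is covered by interpolating $\|\varphi\|_{H^1}$ and $\|\varphi\|_{H^2}$ (equivalently $\|\nabla\varphi\|_{L^2}$ and $\|\Delta\varphi\|_{L^2}$ plus lower-order terms), giving $\|\varphi\|_{L^{2p/(p-2)}_x}\lesssim\|\varphi\|_{H^1}^{1-\theta}\|\varphi\|_{H^2}^{\theta}$ with $\theta=\frac{6-p}{2p}$. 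Finally apply Hölder in time with exponents $(\infty,\,q,\,\frac{2p}{6-p})$ — note $\frac{1}{\infty}+\frac1q+\frac{6-p}{2p}=\frac1q+\frac{6-p}{2p}$, and the constraint $\frac2q+\frac3p=\frac32$ is precisely what makes this equal $\frac12$, so the remaining $\frac12$ is absorbed by $\|\nabla\psi\|_{L^2_t(L^2)}$. Collecting exponents yields the claimed bound.

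The main obstacle is purely in the exponent bookkeeping: one must check at each step that the Gagliardo--Nirenberg interpolation exponent $\theta$ lies in $[0,1]$ (this is exactly where the restriction $2\le p\le 6$ is forced — at $p=6$ one has $\theta=0$ and at $p=2$ one has $\theta=\frac12$ for the $L^\infty$ endpoint), and that the resulting time-integrability exponents are all $\ge1$ so that Hölder in time is legitimate; the relation $\frac2q+\frac3p=\frac32$ has to be used exactly once in each estimate to make the time exponents close, and it is worth isolating that computation. A secondary technical point is that the quantities controlled in the two hypotheses differ ($\nabla\phi\in L^q_t(L^p)$ in the first estimate versus $\Delta\varphi\in L^q_t(L^p)$ in the second), so the Gagliardo--Nirenberg step in the first estimate must be set up to produce $\nabla\phi$ rather than $\Delta\phi$; this is why the power of $\nabla\phi$ there is $\frac12$ rather than $1$, and why the $\phi$-norms carry the larger fractional powers $\frac{5p-6}{4p}$ and $\frac{p+6}{4p}$. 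Once the exponents are pinned down, both inequalities follow from standard Hölder and Gagliardo--Nirenberg with no further input.
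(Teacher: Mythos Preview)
Your plan matches the paper's strategy (spatial H\"older, Gagliardo--Nirenberg/interpolation, then time H\"older), and your route for the second estimate via a direct $H^1$--$H^2$ interpolation is in fact a cleaner shortcut than the paper's more elaborate chain through $L^3$, $L^{4p/(p-2)}$ and $W^{1,12p/(7p-6)}$.  However, two pieces of your arithmetic are off and would cause a written-out proof to break.

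First, for the second inequality: your time-H\"older check is incorrect.  With $\tfrac{2}{q}+\tfrac{3}{p}=\tfrac{3}{2}$ one has $\tfrac{1}{q}=\tfrac{3p-6}{4p}$, so $\tfrac{1}{q}+\tfrac{6-p}{2p}=\tfrac{p+6}{4p}$, which equals $\tfrac12$ only at $p=6$.  The correct time exponent for the factor $\|\varphi\|_{H^2}^{\theta}$ is $2/\theta=\tfrac{4p}{6-p}$ (not $\tfrac{2p}{6-p}$), and then indeed $\tfrac{1}{q}+\tfrac{6-p}{4p}=\tfrac12$.  Your endpoint claim ``$\theta=\tfrac12$ at $p=2$'' is also wrong: $\theta=\tfrac{6-p}{2p}$ gives $\theta=1$ at $p=2$ (corresponding to $\|\varphi\|_{L^\infty}\lesssim\|\varphi\|_{H^2}$).

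Second, for the first inequality: the literal split ``one copy of $\phi$ in $L^p$, the other in $L^{2p/(p-2)}$ interpolated between $L^2$ and $L^6$'' fails for $p<3$, because the $L^2$--$L^6$ interpolation exponent for $L^{2p/(p-2)}$ would be $3/p>1$.  The paper instead uses the split
\[
\|\phi\|_{L^{\frac{12p}{5p-6}}}\,\|\phi\|_{L^{\frac{12p}{6+p}}}\,\|\nabla\psi\|_{L^2},
\]
with the key Gagliardo--Nirenberg step $\|\phi\|_{L^{12p/(6+p)}}\lesssim\|\nabla\phi\|_{L^p}^{1/2}\|\phi\|_{L^2}^{1/2}$ (this is where the power $\tfrac12$ on $\|\nabla\phi\|_{L^q_t(L^p)}$ originates) together with $\|\phi\|_{L^{12p/(5p-6)}}\lesssim\|\phi\|_{L^2}^{(3p-6)/(4p)}\|\phi\|_{L^6}^{(p+6)/(4p)}$, valid for all $p\in[2,6]$.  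With these specific exponents the bookkeeping closes exactly as stated.
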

\begin{proof}
	For any $2\leq p\leq 6$, it is easy to check that
	\begin{align}
		\|\phi\|_{L^\frac{12p}{6+p}}\lesssim& \|\nabla \phi\|_{L^p}^{\frac{1}{2}}\|\phi\|_{L^2}^{\frac{1}{2}},\\
		\|\phi\|_{L^\frac{12p}{5p-6}}\lesssim& \| \phi\|_{L^2}^{\frac{3p-6}{4p}}\|\phi\|_{L^6}^{\frac{p+6}{4p}},
	\end{align}
	where we used the Gagliardo-Nirenberg inequality and $L^p$ interpolation.
	
	These together with H\"{o}lder inequality yield
	%By the use of H\"{o}lder inequality and $L^p$ interpolation, we have
	\[
		\begin{split}
			\left| \int_{0}^{T}(\phi,\phi \cdot \nabla \psi) dt \right|\leq & \int_0^T\|\phi\|_{L^\frac{12p}{5p-6}}\|\phi\|_{L^\frac{12p}{6+p}}\|\nabla \psi\|_{L^2} dt\\
			\lesssim & \int_0^T\|\nabla \phi\|_{L^p}^{\frac{1}{2}}\|\phi\|_{L^2}^{\frac{1}{2}}\| \phi\|_{L^2}^{\frac{3p-6}{4p}}\|\phi\|_{L^6}^{\frac{p+6}{4p}} \|\nabla \psi\|_{L^2} dt\\
			\lesssim & \left(\int_0^T \left[\|\nabla \phi\|_{L^p} \|\phi\|_{L^2}^{\frac{5p-6}{2p}}\right] (\|\nabla \phi\|_{L^2}^2)^{\frac{p+6}{4p}} dt \right)^{\frac{1}{2}}\|\nabla \psi\|_{L_t^2(L^2)}\\
			\lesssim &  \left(\int_0^T \|\nabla \phi\|_{L^p}^{\frac{4p}{3p-6}} \|\phi\|_{L^2}^{\frac{10p-12}{3p-6}} dt\right)^{\frac{3p-6}{8p}} \cdot \left(\int_0^T\|  \phi\|_{H^1}^2 dt \right)^{\frac{p+6}{8p}} \|\nabla \psi\|_{L_t^2(L^2)} \\
			\lesssim & \|\phi\|_{L_t^\infty(L^2)}^{\frac{5p-6}{4p}} \|\phi\|_{L_t^2(H^1)}^{\frac{p+6}{4p}} \left(\int_0^T \|\nabla \phi\|_{L^p}^{\frac{4p}{3p-6}} dt\right)^{\frac{3p-6}{8p}} \|\nabla \psi\|_{L_t^2(L^2)}\\
			\lesssim & \|\phi\|_{L_t^\infty(L^2)}^{\frac{5p-6}{4p}} \|\phi\|_{L_t^2(H^1)}^{\frac{p+6}{4p}} \|\nabla \phi\|_{L_t^q(L^p)}^{\frac{1}{2}} \|\nabla \psi\|_{L_t^2(L^2)}.
		\end{split}
	\]
%	where the last inequality is due to the following Sobolev embededing inequality
%	\begin{equation}\label{sobolev}
%		\|\phi\|_{L^s}\leq \|\nabla \phi\|_{L^p}\quad\text{with} \quad \dfrac{1}{s}= \dfrac{1}{p} -\dfrac{1}{3}.
%	\end{equation}
%	and
%	\begin{equation}
%		\dfrac{4s}{s-6} 
%		%= \dfrac{\dfrac{6p}{3-p}}{\dfrac{3p}{3-p}-3}
%		=\dfrac{4p}{3p-6}.
%	\end{equation}
	Similarly, by the use of $L^p$ interpolation, one can see that%for all $2\leq p\leq 6$,
	\begin{align}
		\|\varphi\|_{L^{\frac{2p}{p-2}}}\lesssim& \|\varphi\|_{L^3}^{\frac{3p-6}{6+p}} \|\varphi\|_{L^{\frac{4p}{p-2}}}^{\frac{12-2p}{6+p}},\\
		\|\varphi\|_{W^{1,\frac{12p}{7p-6}}}\lesssim& \| \varphi\|_{H^1}^{\frac{5p-6}{4p}}\|\varphi\|_{W^{1,6}}^{\frac{6-p}{4p}}
	\end{align}
	hold for all $2\leq p\leq 6$.
	Then, we have
	\begin{align*}
		\left| \int_0^T ( \varphi\Delta \varphi-\Delta \varphi\varphi :\nabla \psi )dt \right| \leq & \int_0^T \|\Delta \varphi\|_{L^p}\| \varphi\|_{L^{\frac{2p}{p-2}}}\|\nabla \psi\|_{L^2}  dt\\
		\lesssim & \int_0^T \|\Delta \varphi\|_{L^p} \|\varphi\|_{L^3}^{\frac{3p-6}{6+p}} \|\varphi\|_{L^{\frac{4p}{p-2}}}^{\frac{12-2p}{6+p}} \|\nabla \psi\|_{L^2}  dt\\
		\lesssim & \int_0^T \|\Delta \varphi\|_{L^p}\|\varphi\|_{H^1}^{\frac{3p-6}{6+p}}\| \varphi\|_{W^{1,\frac{12p}{7p-6}}}^{\frac{12-2p}{6+p}}\|\nabla \psi\|_{L^2}  dt\\
		\lesssim & \int_0^T \|\Delta \varphi\|_{L^p}\|\varphi\|_{H^1}^{\frac{3p-6}{6+p}}\| \| \varphi\|_{H^1}^{\frac{(5p-6)(6-p)}{2p(6+p)}}\|\varphi\|_{W^{1,6}}^{\frac{(6-p)^2}{2p(6+p)}}\|\nabla \psi\|_{L^2}  dt\\
		\lesssim & \int_0^T \|\Delta \varphi\|_{L^p}\| \varphi\|_{H^1}^{\frac{3p-6}{2p}} \| \varphi\|_{H^2}^{\frac{6-p}{2p}}\|\nabla \psi\|_{L^2}  dt\\
		\lesssim & \left(\int_0^T \|\Delta \varphi\|_{L^p}^2(\| \varphi\|_{H^1}^2)^{\frac{3p-6}{2p}} (\| \varphi\|_{H^2}^2)^{\frac{6-p}{2p}} dt \right)^{\frac{1}{2}}\|\nabla \psi\|_{L_t^2(L^2)} \\
		\lesssim & \left(\int_0^T \|\Delta \varphi\|_{L^p}^{\frac{4p}{3p-6}}\|\varphi\|_{H^1}^2 dt\right)^{\frac{3p-6}{4p}} \left( \int_0^T\|\varphi\|_{H^2}^2 dt \right)^{\frac{6-p}{4p}}\|\nabla \psi\|_{L_t^2(L^2)} \\
		\lesssim &  \|\varphi\|_{L_t^\infty(H^1)}^{\frac{3p-6}{2p}} \|\Delta \varphi\|_{L_t^q(L^p)} \|\varphi\|_{L_t^2(H^2)}^{\frac{6-p}{2p}} \|\nabla \psi\|_{L_t^2(L^2)}.
	\end{align*}
	%The proof of the second inequality is based on the fact that
	where we have used the fact that
	\[
	\frac{(6-p)^2}{2p(6+p)} + \frac{6-p}{6+p} =\frac{6-p}{2p},\quad \frac{3p-6}{6+p}+ \frac{(5p-6)(6-p)}{2p(6+p)} - \frac{6-p}{6+p}=\frac{3p-6}{2p},
	\]
	and the third inequality is due to the Sobolev embededing $W^{1,\frac{12p}{7p-6}}\hookrightarrow L^{\frac{4p}{p-2}}$.
	%The proof is complete. %
	This completes the proof.
\end{proof}
\begin{lemma}\label{lem2.3}
	Let $2\leq p\leq 6$ and $\phi\in L^\infty(0,T;L^2(\R^3))\cap L^2(0,T;H^1(\R^3))$, $\varphi\in L^\infty(0,T;H^1(\R^3))\cap L^2(0,T;H^2(\R^3))$. If $\nabla \psi\in L^q(0,T;L^p(\R^3))$ with
	$\frac{2}{q}+\frac{3}{p}=\frac{3}{2}$,
	then
	\begin{equation}
		\left| \int_{0}^{T}(\phi,\phi \cdot \nabla \psi) dt \right|\lesssim\|\phi\|_{L_t^\infty(L^2)}^{\frac{3p-6}{2p}} \|\phi\|_{L_t^2(H^1)}^{\frac{p+6}{2p}} \|\nabla \psi\|_{L_t^q(L^p)}
	\end{equation}
	and
	\begin{equation}
		\left| \int_0^T ( \varphi\Delta \varphi-\Delta \varphi\varphi :\nabla \psi )dt \right| \lesssim  \|\varphi\|_{L_t^\infty(H^1)}^{\frac{3p-6}{2p}}  \|\varphi\|_{L_t^2(H^2)}^{\frac{6+p}{2p}} \|\nabla \psi\|_{L_t^q(L^p)}.
	\end{equation}
\end{lemma}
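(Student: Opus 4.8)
The plan is to obtain both bounds as the ``dual'' companions of Lemma~\ref{lem2.2}: in that lemma the $L_t^q(L^p)$ integrability is carried by $\nabla\phi$ (respectively $\Delta\varphi$) while $\nabla\psi$ only lies in $L_t^2(L^2)$, whereas now it is $\nabla\psi$ that carries the $L_t^q(L^p)$ norm. Accordingly I would recycle, essentially verbatim, the pointwise-in-space interpolation inequalities already isolated in the proof of Lemma~\ref{lem2.2}, and merely rearrange the two H\"older splittings: first in the space variable, to peel off $\|\nabla\psi\|_{L^p}$, and then in time, with the exponents forced by the scaling identity $\frac2q+\frac3p=\frac32$.

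For the first estimate I would start from the trilinear H\"older bound $\bigl|\int_0^T(\phi,\phi\cdot\nabla\psi)\,dt\bigr|\le\int_0^T\|\phi\|_{L^{2p/(p-1)}}^2\,\|\nabla\psi\|_{L^p}\,dt$, interpolate $\|\phi\|_{L^{2p/(p-1)}}\lesssim\|\phi\|_{L^2}^{1-\frac{3}{2p}}\|\phi\|_{L^6}^{\frac{3}{2p}}$, invoke $\|\phi\|_{L^6}\lesssim\|\nabla\phi\|_{L^2}\le\|\phi\|_{H^1}$, and split the surviving power of $\|\phi\|_{L^2}$ (again using $\|\phi\|_{L^2}\le\|\phi\|_{H^1}$) to reach the pointwise bound $\|\phi\|_{L^2}^{(3p-6)/(2p)}\|\phi\|_{H^1}^{(p+6)/(2p)}\|\nabla\psi\|_{L^p}$. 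Pulling the factor $\|\phi\|_{L_t^\infty(L^2)}^{(3p-6)/(2p)}$ out of the time integral and then applying H\"older in $t$ with the conjugate pair $\frac{p+6}{4p}+\frac1q=1$ --- which is precisely $q=\frac{4p}{3p-6}$, i.e.\ $\frac2q+\frac3p=\frac32$ --- leaves $\|\phi\|_{L_t^2(H^1)}^{(p+6)/(2p)}\,\|\nabla\psi\|_{L_t^q(L^p)}$ and finishes the first bound.

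The second estimate is entirely parallel. H\"older in $x$ with $\frac12+\frac{p-2}{2p}+\frac1p=1$ dominates the integrand by $\|\Delta\varphi\|_{L^2}\,\|\varphi\|_{L^{2p/(p-2)}}\,\|\nabla\psi\|_{L^p}$; the same Gagliardo--Nirenberg/Sobolev chain used in Lemma~\ref{lem2.2} (routed through $L^3$ and an auxiliary $W^{1,\cdot}$ space) yields $\|\varphi\|_{L^{2p/(p-2)}}\lesssim\|\varphi\|_{H^1}^{(3p-6)/(2p)}\|\varphi\|_{H^2}^{(6-p)/(2p)}$; absorbing $\|\Delta\varphi\|_{L^2}\le\|\varphi\|_{H^2}$ turns the integrand into $\|\varphi\|_{H^1}^{(3p-6)/(2p)}\|\varphi\|_{H^2}^{(6+p)/(2p)}\|\nabla\psi\|_{L^p}$; and H\"older in $t$ with the conjugate exponent $q'=\frac{4p}{p+6}$ of $q$ --- pulling out $\|\varphi\|_{L_t^\infty(H^1)}^{(3p-6)/(2p)}$ and observing that $\frac{(6+p)q'}{2p}=2$, so that the surviving time integral equals $\|\varphi\|_{L_t^2(H^2)}^{(6+p)/(2p)}$ --- gives exactly the stated right-hand side.

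The step I expect to require the most care is the pointwise interpolation $\|\varphi\|_{L^{2p/(p-2)}}\lesssim\|\varphi\|_{H^1}^{(3p-6)/(2p)}\|\varphi\|_{H^2}^{(6-p)/(2p)}$ holding uniformly for every $2\le p\le6$: the exponent $\frac{2p}{p-2}$ sweeps the whole range $[3,\infty]$, so the endpoint $p=2$ forces the use of $H^2\hookrightarrow L^\infty(\R^3)$, and the intermediate exponents must be handled as in Lemma~\ref{lem2.2}. One clean way to see it in one stroke is to combine the Sobolev embedding $H^{(p+6)/(2p)}\hookrightarrow L^{2p/(p-2)}$ with the Sobolev-interpolation inequality $\|\varphi\|_{H^{(p+6)/(2p)}}\le\|\varphi\|_{H^1}^{(3p-6)/(2p)}\|\varphi\|_{H^2}^{(6-p)/(2p)}$. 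Beyond this, the argument is bookkeeping: once the space-side inequalities are fixed, the only thing left to verify is that the time-H\"older exponents are conjugate, and that is automatic because both were manufactured out of $\frac2q+\frac3p=\frac32$.
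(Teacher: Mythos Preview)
Your approach is correct and follows essentially the same route as the paper: the same pointwise H\"older/interpolation bounds in space (indeed your symmetric splitting $\|\phi\|_{L^{2p/(p-1)}}^2$ lands on exactly the same product $\|\phi\|_{L^2}^{(2p-3)/p}\|\phi\|_{L^6}^{3/p}$ as the paper's asymmetric $L^{12p/(7p-6)}\times L^{12p/(5p-6)}$ splitting), followed by H\"older in time with the conjugate pair determined by $\tfrac{2}{q}+\tfrac{3}{p}=\tfrac{3}{2}$; the only differences are organizational.

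One small slip in your closing aside: the critical Sobolev index for $L^{2p/(p-2)}(\R^3)$ is $s=3/p$, not $(p+6)/(2p)$, so the ``one-stroke'' fractional-Sobolev route only interpolates between $H^1$ and $H^2$ when $p\le 3$. A uniform single-line argument for all $2\le p\le 6$ is to interpolate $L^{2p/(p-2)}$ between $L^3$ and $L^\infty$ (exponents $\tfrac{3p-6}{2p}$ and $\tfrac{6-p}{2p}$) and then invoke $H^1\hookrightarrow L^3$, $H^2\hookrightarrow L^\infty$; your primary reference to the chain from Lemma~\ref{lem2.2} is correct regardless.
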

\begin{proof}
	Similarly to Lemma~\ref{lem2.2}, we use H\"{o}lder inequality, $L^p$ interpolation, Gagliardo-Nirenberg inequality and Sobolev embedding again to estimate
	%we eventually obtain
		\begin{align*}
			\left| \int_{0}^{T}(\phi,\phi \cdot \nabla \psi) dt \right|\leq & \int_0^T\|\phi\|_{L^\frac{12p}{7p-6}}\|\phi\|_{L^\frac{12p}{5p-6}}\|\nabla\psi\|_{L^p} dt\\
			\lesssim & \int_0^T \|\phi\|_{L^2}^{\frac{5p-6}{4p}}\|\phi\|_{L^6}^{\frac{6-p}{4p}} \|\phi\|_{L^2}^{\frac{3p-6}{4p}}\|\phi\|_{L^6}^{\frac{p+6}{4p}} \|\nabla \psi\|_{L^p} dt\\
			\lesssim & \int_0^T \|\phi\|_{L^2}^{\frac{3p-6}{2p}}\|\phi\|_{H^1}^{\frac{p+6}{2p}}\|\nabla \psi\|_{L^p} dt\\
			%\lesssim & \left(\int_0^T \|\nabla \psi\|_{L^p}^{\frac{4p}{3p-6}}\|\phi\|_{L^2}^2 dt \right)^{\frac{3p-6}{4p}}\|\phi\|_{L_t^2(H^1)}^{\frac{p+6}{2p}}\\
			\lesssim & \|\phi\|_{L_t^\infty(L^2)}^{\frac{3p-6}{2p}} \|\phi\|_{L_t^2(H^1)}^{\frac{p+6}{2p}} \|\nabla \psi\|_{L_t^q(L^p)}
		\end{align*}
		and
	\begin{align*}
		\left| \int_0^T ( \varphi\Delta \varphi-\Delta \varphi\varphi :\nabla \psi )dt \right| \leq & \int_0^T \|\Delta \varphi\|_{L^2}\| \varphi\|_{L^{\frac{2p}{p-2}}}\|\nabla \psi\|_{L^p}  dt\\
		\lesssim & \int_0^T \|\Delta \varphi\|_{L^2} \|\varphi\|_{L^3}^{\frac{3p-6}{6+p}} \|\varphi\|_{L^{\frac{4p}{p-2}}}^{\frac{12-2p}{6+p}} \|\nabla \psi\|_{L^p}  dt\\
		\lesssim & \int_0^T \|\Delta \varphi\|_{L^2}\|\varphi\|_{H^1}^{\frac{3p-6}{6+p}}\| \varphi\|_{W^{1,\frac{12p}{7p-6}}}^{\frac{12-2p}{6+p}}\|\nabla \psi\|_{L^p}  dt\\
		%\lesssim & \int_0^T \|\Delta \varphi\|_{L^2}\|\varphi\|_{H^1}^{\frac{3p-6}{6+p}}\| \| \varphi\|_{H^1}^{\frac{(5p-6)(6-p)}{2p(6+p)}}\|\varphi\|_{W^{1,6}}^{\frac{(6-p)^2}{2p(6+p)}}\|\nabla \psi\|_{L^p}  dt\\
		\lesssim & \int_0^T \|\Delta \varphi\|_{L^2}\| \varphi\|_{H^1}^{\frac{3p-6}{2p}} \| \varphi\|_{H^2}^{\frac{6-p}{2p}}\|\nabla \psi\|_{L^p}  dt\\
		\lesssim & \left(\int_0^T \|\nabla \psi\|_{L^p}^2(\| \varphi\|_{H^1}^2)^{\frac{3p-6}{2p}} (\| \varphi\|_{H^2}^2)^{\frac{6-p}{2p}} dt \right)^{\frac{1}{2}}\|\Delta \varphi\|_{L_t^2(L^2)} \\
		\lesssim & \left(\int_0^T \|\nabla \psi\|_{L^p}^{\frac{4p}{3p-6}}\|\varphi \|_{H^1}^2 dt \right)^{\frac{3p-6}{4p}} \left( \int_0^T\|\varphi\|_{H^2}^2 dt \right)^{\frac{6-p}{4p}}\|\Delta \varphi\|_{L_t^2(L^2)} \\
		\lesssim &  \|\varphi\|_{L_t^\infty(H^1)}^{\frac{3p-6}{2p}}  \|\varphi\|_{L_t^2(H^2)}^{\frac{6+p}{2p}} \|\nabla \psi\|_{L_t^q(L^p)}.
	\end{align*}
	The proof is complete.
\end{proof}
\begin{lemma}\label{lem2.4}
	Let $2\leq p\leq 6$ and $\Phi\in  L^\infty(0,T;H^1(\R^3))\cap L^2(0,T;H^2(\R^3))$, $\Theta\in  L^2(0,T;L^2(\R^3))$, $\psi\in  L^\infty(0,T;L^2(\R^3))$, $\phi\in  L^\infty(0,T;L^2(\R^3))\cap L^2(0,T;H^1(\R^3))$. If $(\nabla \psi,\Psi )\in L^q(0,T;L^p(\R^3))$ with
	$\frac{2}{q}+\frac{3}{p}=\frac{3}{2}$, we then have
	\begin{equation}
			\begin{split}
				\left| \int_{0}^{T} \left(\psi \cdot \nabla \Phi+ \Phi \nabla \psi: \Theta\right) dt  \right| \lesssim& \| \Theta \|_{L_t^{2}(L^{2})} \Big(\| \psi \|_{L_t^{\infty}(L^2)}^{\frac{1}{2}} \| \Phi \|_{L_t^{\infty}(H^{1})}^{\frac{3p-6}{4p}} \| \Phi \|_{L_t^{2}(H^{2})}^{\frac{p+6}{4p}} \|\nabla\psi\|_{L_t^q(L^p)}^{\frac{1}{2}}\\
				& + \| \Phi \|_{L_t^{\infty}(H^{1})}^{\frac{3p-6}{2p}} \| \Phi \|_{L_t^{2}(H^{2})}^{\frac{6-p}{2p}}\|\nabla\psi\|_{L_t^q(L^p)}  \Big)
			\end{split}
	\end{equation}
	and
	\begin{equation}
			\begin{split}
			&\left| \int_{0}^{T} \left(\phi \cdot \nabla \Phi+ \Phi \nabla \phi: \Psi\right) dt  \right| \\					
			\lesssim&\|  \Psi \|_{L_t^{q}(L^{p})} \left(\| \phi \|_{L_t^{\infty}(L^{2})}^{\frac{3p-6}{4p}}  \| \Phi \|_{L_t^{\infty}(H^{1})}^{\frac{3p-6}{4p}}\left(\| \phi \|_{L_t^{2}(H^{1})}^{\frac{p+6}{2p}}+ \| \Phi \|_{L_t^{2}(H^{2})}^{\frac{p+6}{2p}}\right) + \| \Phi \|_{L_t^{\infty}(H^{1})}^{\frac{3p-6}{2p}} \| \Phi \|_{L_t^{2}(H^{2})}^{\frac{6-p}{2p}} \| \nabla \phi \|_{L_t^{2}(L^{2})}\right).
			\end{split}
	\end{equation}
\end{lemma}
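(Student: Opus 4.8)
The plan is to prove both estimates of Lemma~\ref{lem2.4} by the same template already used in Lemmas~\ref{lem2.2} and~\ref{lem2.3}: reduce each integrand to a product of three factors, put one factor in the $L^q_tL^p_x$ norm coming from the hypothesis, use H\"older in space to split the remaining two factors into Lebesgue exponents that exactly add up to $1$ (together with the $1/p$ slot), then use Gagliardo--Nirenberg, $L^p$-interpolation and the Sobolev embedding $W^{1,\frac{12p}{7p-6}}\hookrightarrow L^{\frac{4p}{p-2}}$ to rewrite each spatial norm in terms of $L^2$, $H^1$ and $H^2$ (resp.\ $L^2$ and $H^1$) norms, and finally apply H\"older in time against the remaining $L^2_t$ or $L^q_t$ factor, using $\frac2q+\frac3p=\frac32$ to see that the time exponents match. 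The two summands in each displayed bound come precisely from the two structurally different terms in the integrand, $\psi\cdot\nabla\Phi$ (or $\phi\cdot\nabla\Phi$) and $\Phi\nabla\psi$ (or $\Phi\nabla\phi$): the derivative either falls on $\Phi$ or on the velocity-type factor, and these two placements force different interpolation budgets.

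Concretely, for the first inequality I would bound
\[
\left|\int_0^T(\psi\cdot\nabla\Phi:\Theta)\,dt\right|
\le\int_0^T\|\Theta\|_{L^2}\,\|\psi\|_{L^{\frac{12p}{5p-6}}}\,\|\nabla\Phi\|_{L^{\frac{12p}{6+p}}}\,dt
\]
and
\[
\left|\int_0^T(\Phi\,\nabla\psi:\Theta)\,dt\right|
\le\int_0^T\|\Theta\|_{L^2}\,\|\nabla\psi\|_{L^p}\,\|\Phi\|_{L^{\frac{2p}{p-2}}}\,dt,
\]
so that in the first term $\psi$ gets the interpolation $\|\psi\|_{L^{\frac{12p}{5p-6}}}\lesssim\|\psi\|_{L^2}^{\frac{3p-6}{4p}}\|\psi\|_{L^6}^{\frac{p+6}{4p}}$ (here I only keep $\|\psi\|_{L^\infty_t(L^2)}^{1/2}$ and move the rest into the $L^2_t(H^1)$-type control of $\Phi$, since $\nabla\Phi$ itself is interpolated as $\|\nabla\Phi\|_{L^{\frac{12p}{6+p}}}\lesssim\|\Delta\Phi\|_{L^p}^{1/2}\|\nabla\Phi\|_{L^2}^{1/2}$, which then feeds into $\|\Phi\|_{L^2_t(H^2)}$ and $\|\nabla\psi\|_{L^q_t(L^p)}^{1/2}$), while in the second term $\Phi$ is handled exactly as in Lemma~\ref{lem2.2}, namely $\|\Phi\|_{L^{\frac{2p}{p-2}}}\lesssim\|\Phi\|_{H^1}^{\frac{3p-6}{2p}}\|\Phi\|_{H^2}^{\frac{6-p}{2p}}$ via $L^p$-interpolation plus the same Sobolev embedding, and then H\"older in time against $\|\Theta\|_{L^2_t(L^2)}$ and $\|\nabla\psi\|_{L^q_t(L^p)}$ with the exponent identity $\frac{4p}{3p-6}\cdot\frac{3p-6}{4p}=1$ and $\frac2q+\frac3p=\frac32$. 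For the second inequality the roles of the two $L^2$-slots are swapped: now $\Psi$ (not $\nabla\phi$) carries the $L^q_tL^p_x$ norm, $\nabla\Phi$ and $\nabla\phi$ play symmetric roles in the term $\phi\cdot\nabla\Phi$ (producing the factor $\|\phi\|_{L^2_t(H^1)}^{\frac{p+6}{2p}}+\|\Phi\|_{L^2_t(H^2)}^{\frac{p+6}{2p}}$ after a Young-type split of the single product into each of the two possibilities), while the term $\Phi\,\nabla\phi$ gives $\|\Phi\|_{L^\infty_t(H^1)}^{\frac{3p-6}{2p}}\|\Phi\|_{L^2_t(H^2)}^{\frac{6-p}{2p}}\|\nabla\phi\|_{L^2_t(L^2)}$ exactly as in the second estimate of Lemma~\ref{lem2.2}.

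The main obstacle is purely bookkeeping rather than conceptual: one has to choose, for each of the two placements of the derivative, the unique triple of H\"older exponents so that (a) the spatial exponents are admissible for the Gagliardo--Nirenberg/Sobolev inequalities being invoked, (b) the $L^2$, $H^1$, $H^2$ powers that emerge are nonnegative for all $p\in[2,6]$ (this is where the restriction $2\le p\le6$ is used — at $p=6$ several exponents degenerate and at $p=2$ some exponent would blow up, matching the endpoints in the statement), and (c) the time integrability of the resulting product is exactly $\frac{2p}{3p-6}$ in the $H^2$/$H^1$-mixed slot so that it pairs with $\frac{4p}{3p-6}$ in the $L^p$ slot under the scaling relation $\frac2q+\frac3p=\frac32$. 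I would verify the two algebraic identities analogous to
\[
\frac{(6-p)^2}{2p(6+p)}+\frac{6-p}{6+p}=\frac{6-p}{2p},\qquad
\frac{3p-6}{6+p}+\frac{(5p-6)(6-p)}{2p(6+p)}-\frac{6-p}{6+p}=\frac{3p-6}{2p}
\]
that appeared in Lemma~\ref{lem2.2}, plus the one new identity $\frac12+\frac{3p-6}{4p}=\frac{5p-6}{4p}$ needed to combine the $\psi$- and $\Phi$-contributions in the first term, and then the rest is a verbatim repetition of the chain of inequalities displayed in the proofs of Lemmas~\ref{lem2.2} and~\ref{lem2.3}. No new tool beyond H\"older, Gagliardo--Nirenberg, $L^p$-interpolation and $W^{1,\frac{12p}{7p-6}}\hookrightarrow L^{\frac{4p}{p-2}}$ is required.
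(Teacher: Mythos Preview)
Your overall strategy is exactly that of the paper: split each integrand into the two pieces, H\"older in space with $\Theta$ (resp.\ $\Psi$) taking one slot, interpolate the remaining factors via Gagliardo--Nirenberg, $L^p$-interpolation and $W^{1,\frac{12p}{7p-6}}\hookrightarrow L^{\frac{4p}{p-2}}$, then H\"older in time using $\frac2q+\frac3p=\frac32$. Your treatment of the second inequality and of the $\Phi\nabla\psi$ piece in the first inequality matches the paper.

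There is, however, a concrete error in your H\"older splitting for the $\psi\cdot\nabla\Phi$ piece of the first inequality: you have swapped the exponents on $\psi$ and $\nabla\Phi$. You put $\psi$ in $L^{\frac{12p}{5p-6}}$ and interpolate it between $L^2$ and $L^6$, but $\psi$ is only assumed in $L^\infty_t(L^2)$ with $\nabla\psi\in L^q_tL^p$, so $\|\psi\|_{L^6}$ is unavailable. Likewise you put $\nabla\Phi$ in $L^{\frac{12p}{p+6}}$ and interpolate via $\|\Delta\Phi\|_{L^p}^{1/2}\|\nabla\Phi\|_{L^2}^{1/2}$, but $\Delta\Phi\in L^p$ is \emph{not} assumed (only $\Phi\in H^2$, i.e.\ $\Delta\Phi\in L^2$). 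The paper's assignment is the reverse:
\[
\|\psi\|_{L^{\frac{12p}{p+6}}}\lesssim\|\nabla\psi\|_{L^p}^{1/2}\|\psi\|_{L^2}^{1/2},
\qquad
\|\nabla\Phi\|_{L^{\frac{12p}{5p-6}}}\lesssim\|\nabla\Phi\|_{L^2}^{\frac{3p-6}{4p}}\|\nabla\Phi\|_{L^6}^{\frac{p+6}{4p}},
\]
which uses precisely the available regularity ($\nabla\psi\in L^p$ for the first, $\nabla\Phi\in L^2\cap L^6$ from $\Phi\in H^1\cap H^2$ for the second) and produces exactly the factors $\|\psi\|_{L^\infty_t(L^2)}^{1/2}\|\nabla\psi\|_{L^q_t(L^p)}^{1/2}$ and $\|\Phi\|_{L^\infty_t(H^1)}^{\frac{3p-6}{4p}}\|\Phi\|_{L^2_t(H^2)}^{\frac{p+6}{4p}}$ in the stated bound. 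Once you swap back, the rest of your bookkeeping goes through verbatim.
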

\begin{proof}
	Applying the same procedure as in Lemma~\ref{lem2.2} and Lemma~\ref{lem2.3}, we eventually obtain that
	%Here we use H\"{o}lder inequality, $L^p$ interpolation, Gagliardo-Nirenberg inequality and Sobolev embedding again to estimate
		\begin{align*}
			&\left| \int_{0}^{T} \left(\psi \cdot \nabla \Phi+ \Phi \nabla \psi: \Theta\right) dt  \right| \\
			\lesssim &  \int_0^T  \left( \|\psi\|_{L^{\frac{12p}{p+6}}}\|\nabla \Phi\|_{L^{\frac{12p}{5p-6}}} + \|\nabla\psi\|_{L^p}\| \Phi\|_{L^3}^{\frac{3p-6}{p+6}} \|\Phi\|_{L^{\frac{4p}{p-2}}}^{\frac{12-2p}{p+6}} \right) \|\Theta\|_{L^2}  dt \\	
			\lesssim &\int_0^T  \left( \|\nabla \psi\|_{L^p}^{\frac{1}{2}} \| \psi\|_{L^2}^{\frac{1}{2}} \|\nabla \Phi\|_{L^2}^{\frac{3p-6}{4p}}\|\nabla \Phi\|_{L^6}^{\frac{p+6}{4p}} + \|\nabla\psi\|_{L^p}\|\Phi\|_{H^1}^{\frac{3p-6}{p+6}} \|\Phi\|_{W^{1,\frac{12p}{7p-6}}}^{\frac{12-2p}{p+6}}\right) \|\Theta\|_{L^2}  dt \\	
			\lesssim& \int_0^T  \left( \|\nabla \psi\|_{L^p}^{\frac{1}{2}} \| \psi\|_{L^2}^{\frac{1}{2}}\|\nabla \Phi\|_{L^2}^{\frac{3p-6}{4p}}\|\nabla \Phi\|_{H^1}^{\frac{p+6}{4p}} + \|\nabla\psi\|_{L^p}\|\Phi\|_{H^1}^{\frac{3p-6}{p+6}}\|\Phi\|_{H^1}^{\frac{(5p-6)(6-p)}{2p(p+6)}} \|\Phi\|_{W^{1,6}}^{\frac{(6-p)^2}{2p(p+6)}} \right) \|\Theta\|_{L^2}  dt \\
			\lesssim& \| \Theta \|_{L_t^{2}(L^{2})} \left( \int_0^T  \|\nabla \psi\|_{L^p} \| \psi\|_{L^2}\|\nabla \Phi\|_{L^2}^{\frac{3p-6}{2p}}(\|\nabla \Phi\|_{H^1}^2)^{\frac{p+6}{4p}} + \|\nabla\psi\|_{L^p}^2(\| \Phi\|_{H^1}^2)^{\frac{3p-6}{2p}}(\| \Phi\|_{H^2}^2)^\frac{6-p}{2p}  dt \right)^\frac{1}{2} \\
%			\lesssim& \| \Theta \|_{L_t^{2}(L^{2})} \left[\| \psi \|_{L_t^{\infty}(L^2)}^{\frac{1}{2}} \| \Phi \|_{L_t^{\infty}(H^{1})}^{\frac{3p-6}{4p}} \| \Phi \|_{L_t^{2}(H^{2})}^{\frac{p+6}{4p}} \left(\int_0^T \|\nabla\psi\|_{L^p}^{\frac{4p}{3p-6}}dt\right)^{\frac{3p-6}{8p}} + \| \Phi \|_{L_t^{\infty}(H^{1})}^{\frac{3p-6}{2p}} \| \Phi \|_{L_t^{2}(H^{2})}^{\frac{6-p}{2p}}\left(\int_0^T \|\nabla\psi\|_{L^p}^{\frac{4p}{3p-6}}dt\right)^{\frac{3p-6}{4p}}  \right] \\
			\lesssim& \| \Theta \|_{L_t^{2}(L^{2})} \left(\| \psi \|_{L_t^{\infty}(L^2)}^{\frac{1}{2}} \| \Phi \|_{L_t^{\infty}(H^{1})}^{\frac{3p-6}{4p}} \| \Phi \|_{L_t^{2}(H^{2})}^{\frac{p+6}{4p}} \|\nabla\psi\|_{L_t^q(L^p)}^{\frac{1}{2}} + \| \Phi \|_{L_t^{\infty}(H^{1})}^{\frac{3p-6}{2p}} \| \Phi \|_{L_t^{2}(H^{2})}^{\frac{6-p}{2p}}\|\nabla\psi\|_{L_t^q(L^p)}  \right),\\
		%\end{align*}
	   % and
%	\begin{align*}
		&\left| \int_{0}^{T} \left(\phi \cdot \nabla \Phi+ \Phi \nabla \phi: \Psi\right) dt  \right| \\
		\lesssim &  \int_0^T  \left( \|\phi\|_{L^{\frac{12p}{7p-6}}}\|\nabla \Phi\|_{L^{\frac{12p}{5p-6}}} + \|\nabla\phi\|_{L^2}\| \Phi\|_{L^3}^{\frac{3p-6}{p+6}} \|\Phi\|_{L^{\frac{4p}{p-2}}}^{\frac{12-2p}{p+6}} \right) \|\Psi\|_{L^p}  dt \\	
		\lesssim &\int_0^T  \left( \|\phi\|_{L^2}^{\frac{5p-6}{4p}} \| \phi\|_{L^6}^{\frac{6-p}{4p}} \|\nabla \Phi\|_{L^2}^{\frac{3p-6}{4p}}\|\nabla \Phi\|_{L^6}^{\frac{p+6}{4p}} + \|\nabla\phi\|_{L^2}\|\Phi\|_{H^1}^{\frac{3p-6}{p+6}} \|\Phi\|_{W^{1,\frac{12p}{7p-6}}}^{\frac{12-2p}{p+6}}\right)\|\Psi\|_{L^p}  dt \\	
		\lesssim& \int_0^T  \left(  ( \|\phi\|_{L^2}\|\nabla \Phi\|_{L^2})^{\frac{3p-6}{4p}}(\|\phi\|_{H^1}\|\nabla \Phi\|_{H^1})^{\frac{p+6}{4p}} + \|\nabla\phi\|_{L^2}\|\Phi\|_{H^1}^{\frac{3p-6}{p+6}}\|\Phi\|_{H^1}^{\frac{(5p-6)(6-p)}{2p(p+6)}} \|\Phi\|_{W^{1,6}}^{\frac{(6-p)^2}{2p(p+6)}} \right)\|\Psi\|_{L^p}  dt \\
		\lesssim& 
		\left(\int_0^T \| \Psi\|_{L^p}^{\frac{4p}{3p-6}}\|\phi\|_{L^2}\|\nabla \Phi\|_{L^2}  dt\right)^{\frac{3p-6}{4p}}\left(\int_0^T \|\phi\|_{H^1}\|\nabla \Phi\|_{H^1}  dt\right)^{\frac{p+6}{4p}}\\
		& +
		\| \nabla \phi \|_{L_t^{2}(L^{2})} \left( \int_0^T  \|\Psi\|_{L^p}^2(\| \Phi\|_{H^1}^2)^{\frac{3p-6}{2p}}(\| \Phi\|_{H^2}^2)^\frac{6-p}{2p} dt \right)^\frac{1}{2} \\
		\lesssim& \| \phi \|_{L_t^{\infty}(L^{2})}^{\frac{3p-6}{4p}}  \| \Phi \|_{L_t^{\infty}(H^{1})}^{\frac{3p-6}{4p}} \left(\int_0^T \| \Psi\|_{L^p}^{\frac{4p}{3p-6}} dt\right)^{\frac{3p-6}{4p}}\left( \| \phi \|_{L_t^{2}(H^{1})}^{\frac{p+6}{2p}}+ \| \Phi \|_{L_t^{2}(H^{2})}^{\frac{p+6}{2p}}\right)\\
		& +  \| \nabla \phi \|_{L_t^{2}(L^{2})}  \| \Phi \|_{L_t^{\infty}(H^{1})}^{\frac{3p-6}{2p}} \| \Phi \|_{L_t^{2}(H^{2})}^{\frac{6-p}{2p}} \left(\int_0^T \| \Psi\|_{L^p}^{\frac{4p}{3p-6}}dt\right)^{\frac{3p-6}{4p}}  \\					
		\lesssim& \|  \Psi \|_{L_t^{q}(L^{p})} \left(\| \phi \|_{L_t^{\infty}(L^{2})}^{\frac{3p-6}{4p}}  \| \Phi \|_{L_t^{\infty}(H^{1})}^{\frac{3p-6}{4p}}\left(\| \phi \|_{L_t^{2}(H^{1})}^{\frac{p+6}{2p}}+ \| \Phi \|_{L_t^{2}(H^{2})}^{\frac{p+6}{2p}}\right) + \| \Phi \|_{L_t^{\infty}(H^{1})}^{\frac{3p-6}{2p}} \| \Phi \|_{L_t^{2}(H^{2})}^{\frac{6-p}{2p}} \| \nabla \phi \|_{L_t^{2}(L^{2})}\right).
	\end{align*}
	The proof is complete.
\end{proof}
\begin{remark}
	We emphasize that these trilinear terms can be bounded under the condition \eqref{eq1.8}, which yields a much bigger set of indices for the $L_t^qL_x^p$ spaces; see \cite{weak-Q-tensor,weak-strong-3d}. But in the case of $\xi\ne 0$, the condition \eqref{eq:1.5} is a proper choice for the main terms in \eqref{eq1.1}, as one can see later in Section~\ref{sec4}. Since \eqref{eq:1.5} allows $2\leq p\leq 6$, this motivates the above refined analysis on trilinear terms.
	%this observation motivates the above analysis on trilinear terms.
%	Unexpectedly, this new condition also
%	%implies that weak-strong uniqueness of system \eqref{eq1.1} for the corotational case holds in a new regime asin Figure \ref{fig1}.
%	%Eventually, we exhibit 
%	reveals a new regime of weak-strong uniqueness for the corotational case.
\end{remark}

\section{Energy equality of weak solutions}\label{Sec.3}
Before proceeding with the proof of Theorem~\ref{thm1.1}, let us point out that we will prove the energy equality for a class of weak solutions that do not
fit into the Leray-Hopf type weak solution of \eqref{eq1.1}. In other words, the solutions only satisfy the conditions (i), (ii) as stated in Definition~\ref{def1}, which have been studied in \cite{Qtensor11}.

\noindent\textbf{Proof of Theorem~\ref{thm1.1}.}
The proof follows by a similar argument as done in \cite{weak-strong-3d},
which is mainly based on the use of special test functions. In our case, the main point is to deal with some new nonlinear terms due to the appearance of  $\xi$.

\noindent\text{(1).}
First of all, by standard density results, one can extract the approximating sequences $\{ ( Q^i,u^i) \}$ and $\{ ( R^i,v^i) \}$ such that
\begin{align*}
	\{ u^i \}\subset C_c^\infty ([0,T],C_{c,\sigma}^\infty) &\longrightarrow u \quad \text{in}\quad  L^2(0,T;H^1),\\
	\{ \nabla u^i \}\subset C_c^\infty ([0,T],C_c^\infty) &\longrightarrow \nabla u \quad \text{in}\quad  L^q(0,T;L^p),\\
	\{ Q^i \}\subset C_c^\infty ([0,T],C_c^\infty) &\longrightarrow Q \quad \text{in}\quad  L^2(0,T;H^2),\\
	\{ \Delta Q^i \}\subset C_c^\infty ([0,T],C_c^\infty) &\longrightarrow \Delta Q \quad \text{in}\quad  L^q(0,T;L^p),\\
	\{ v^i \}\subset C_c^\infty ([0,T],C_{c,\sigma}^\infty) &\longrightarrow v \quad \text{in}\quad  L^2(0,T;H^1),\\
	\{ R^i \}\subset C_c^\infty ([0,T],C_c^\infty) &\longrightarrow R \quad \text{in}\quad  L^2(0,T;H^2).
\end{align*}
%\[
%\begin{split}
%	\{ u^i \}\subset C_c^\infty ([0,T],C_{c,\sigma}^\infty) &\longrightarrow u \quad \text{in}\quad  L^2(0,T;H^1),\\
%	\{ \nabla u^i \}\subset C_c^\infty ([0,T],C_c^\infty) &\longrightarrow \nabla u \quad \text{in}\quad  L^q(0,T;L^p),\\
%	\{ Q^i \}\subset C_c^\infty ([0,T],C_c^\infty) &\longrightarrow Q \quad \text{in}\quad  L^2(0,T;H^2),\\
%	\{ \Delta Q^i \}\subset C_c^\infty ([0,T],C_c^\infty) &\longrightarrow \Delta Q \quad \text{in}\quad  L^q(0,T;L^p),\\
%	\{ v^i \}\subset C_c^\infty ([0,T],C_{c,\sigma}^\infty) &\longrightarrow v \quad \text{in}\quad  L^2(0,T;H^1),\\
%	\{ R^i \}\subset C_c^\infty ([0,T],C_c^\infty) &\longrightarrow R \quad \text{in}\quad  L^2(0,T;H^2).
%\end{split}
%\]
Then, we take $v_{\varepsilon}^{i}$ to be a test function for $u$ in \eqref{eq-a2}.
In this way we get the following identity for all fixed $0<t\leq T$,
\begin{align}\label{eq-a3}
	&\int_0^t \left( u,\partial_tv_{\varepsilon}^{i} \right)+(u,u\cdot\nabla v_{\varepsilon}^{i})-\mu (\nabla u,\nabla v_{\varepsilon}^{i})ds +L\int_0^t \left( \nabla Q \odot\nabla Q - Q\Delta Q+\Delta QQ :\nabla v_{\varepsilon}^{i} \right)ds \nonumber\\
	=& \underbrace{-\xi\int_0^t\left( H[Q]\left[Q +\dfrac{\mathbb{I}_3}{3}\right]+  \left[Q +\dfrac{\mathbb{I}_3}{3}\right]H[Q]: \nabla v_{\varepsilon}^{i} \right)ds  + 2\xi\int_0^t\left(  \left[Q +\dfrac{\mathbb{I}_3}{3}\right]{\rm Tr}(QH[Q]): \nabla v_{\varepsilon}^{i} \right)ds}_{\Theta_{\xi}} \nonumber\\
	& +(u(t),v_{\varepsilon}^{i}(t)) -(u_0(x),v_{\varepsilon}^{i}(0,x)).
\end{align}
%Intuitively,
%the next procedure is to take first the limit for i, with0 fixed, and
%then justify the limit as 0  As usual, in passing to the limit the term which requires more care is the nonlinear one.
According to Lemma~\ref{lem2.2},
%and the boundedness of Riesz transform,
it is easy to see that
%the convergence of some terms in the following:
	\begin{equation}
		\begin{split}
		\left| \int_{0}^{t}(u,u \cdot \nabla (v_{\varepsilon}^{i} - v_{\varepsilon})) ds \right|\lesssim\|u\|_{L_t^\infty(L^2)}^{\frac{5p-6}{4p}} \|u\|_{L_t^2(H^1)}^{\frac{p+6}{4p}} \|\nabla u\|_{L_t^q(L^p)}^{\frac{1}{2}} \|\nabla (v_{\varepsilon}^{i} - v_{\varepsilon})\|_{L_t^2(L^2)},
		\end{split}
	\end{equation}
	\begin{equation}\label{eq3.3}
		\begin{split}
			\left| \int_{0}^{t} (\nabla Q \odot \nabla Q:\nabla (v_{\varepsilon}^{i} - v_{\varepsilon})) ds \right|
			\lesssim\|Q\|_{L_t^\infty(H^1)}^{\frac{5p-6}{4p}} \|Q\|_{L_t^2(H^2)}^{\frac{p+6}{4p}} \|\Delta Q\|_{L_t^q(L^p)}^{\frac{1}{2}} \|\nabla (v_{\varepsilon}^{i} - v_{\varepsilon})\|_{L_t^2(L^2)},
		\end{split}
	\end{equation}
	\begin{equation}\label{eq3.4}
		\left| \int_0^t ( Q\Delta Q-\Delta QQ :\nabla (v_{\varepsilon}^{i} - v_{\varepsilon}) )ds \right| \lesssim   \|Q\|_{L_t^\infty(H^1)}^{\frac{3p-6}{2p}} \|\Delta Q\|_{L_t^q(L^p)} \|Q\|_{L_t^2(H^2)}^{\frac{6-p}{2p}} \|\nabla (v_{\varepsilon}^{i} - v_{\varepsilon})\|_{L_t^2(L^2)}.
	\end{equation}
	For the nonlinear term $\Theta_{\xi}$, we divide it into two parts,
	\[
	\Theta_{\xi} \sim  \underbrace{\int_0^t \left( QH[Q]+H[Q]Q + H[Q] + {\rm Tr}(QH[Q])\mathbb{I}_3 ,\nabla v_{\varepsilon}^{i}\right) ds}_{\Theta_1} + \underbrace{\int_0^t \left( Q {\rm Tr}(QH[Q]) ,\nabla v_{\varepsilon}^{i}\right) ds}_{\Theta_2}.
	\]
	Indeed, from the proof of \cite[Theorem~1.2]{weak-strong-3d}, one has
	\[
		\begin{aligned}
			&\left| \int_{0}^{t} \left(Q \left(-a Q + b\left[Q^{2} - \frac{{\rm Tr}(Q^{2})}{3} \mathbb{I}_3\right] - c Q {\rm Tr}(Q^{2})\right):\nabla (v_{\varepsilon}^{i} - v_{\varepsilon})\right) ds \right|
			\\
			\lesssim&  \sum_{k=1}^{3} \Vert Q \Vert_{L_t^{\infty}(H^{1})}^{k}
			\Vert Q \Vert_{L_t^{2}(H^{2})}
			\Vert \nabla (v_{\varepsilon}^{i} - v_{\varepsilon}) \Vert_{L_t^{2}(L^{2})}.
		\end{aligned}
	\]
	This together with \eqref{eq3.4} yields
	\begin{equation}\label{eq3.6}
		\begin{aligned}
			|\Theta_1|& \lesssim \left( \sum_{k=0}^{3} \Vert Q \Vert_{L_t^{\infty}(H^{1})}^{k}
			\Vert Q \Vert_{L_t^{2}(H^{2})} + \|Q\|_{L_t^\infty(H^1)}^{\frac{3p-6}{2p}} \|\Delta Q\|_{L_t^q(L^p)} \|Q\|_{L_t^2(H^2)}^{\frac{6-p}{2p}}\right) \|\nabla (v_{\varepsilon}^{i} - v_{\varepsilon})\|_{L_t^2(L^2)}.
		\end{aligned}
	\end{equation}
	Next, we show the convergence of nonlinear term $\Theta_2$, which possesses the specific nonlinear structure in the case of $\xi\ne 0$. That is,
	\begin{equation}\label{eq3.7}
		\begin{split}
			&\left| \int_0^t\left(Q {\rm Tr}\left( -aQ^2+bQ\left[Q^2-\dfrac{{\rm Tr}(Q^2)}{3}\cI_3\right]-cQ^2{\rm Tr}(Q^2)\right): \nabla (v_\epsilon^i- v_\epsilon) \right)ds \right|\\
			\lesssim&\int_0^t\left( \|Q\|_{L^6}^3+\|Q\|_{L^8}^4 + \|Q\|_{L^{10}}^5 \right)\|\nabla (v_\epsilon^i- v_\epsilon)\|_{L^2}ds\\
			\lesssim&\left(\int_0^t \|Q\|_{H^1}^6+ \|D^2Q\|_{L^2} \|Q\|_{L^6}^{7} + + \|D^2Q\|_{L^2}^2 \|Q\|_{L^6}^{8} ds \right)^{\frac{1}{2}}\|\nabla (v_\epsilon^i- v_\epsilon)\|_{L_t^2(L^2)}\\
			\lesssim&\left( \sum_{k=2}^4\|Q\|_{L_t^\infty(H^1)}^k  \right)\|Q\|_{L_t^2(H^2)} \|\nabla (v_\epsilon^i- v_\epsilon)\|_{L_t^2(L^2)}
		\end{split}
	\end{equation}
	and
	\begin{align}\label{eq3.8}
		&\left| \int_{0}^{t} (Q{\rm Tr}(Q\Delta Q):\nabla (v_{\varepsilon}^{i} - v_{\varepsilon})) ds \right|\nonumber\\
		\lesssim& \int_{0}^{t} \| Q \|_{L^{\frac{4p}{p-2}}}^{2} \| \Delta Q \|_{L^{p}} \| \nabla (v_{\varepsilon}^{i} - v_{\varepsilon}) \|_{L^{2}} ds\nonumber\\
		\lesssim &\int_{0}^{t} \| Q \|_{W^{1,\frac{12p}{7p-6}}}^{2} \| \Delta Q \|_{L^{p}} \| \nabla (v_{\varepsilon}^{i} - v_{\varepsilon}) \|_{L^{2}} ds\nonumber\\
		\lesssim& \int_{0}^{t} \| Q \|_{H^1}^{\frac{5p-6}{2p}} \| Q \|_{W^{1,6}}^{\frac{6-p}{2p}} \| \Delta Q \|_{L^{p}} \| \nabla (v_{\varepsilon}^{i} - v_{\varepsilon}) \|_{L^{2}} ds\\
		\lesssim& \left(\int_{0}^{t}\left[  \| \Delta Q \|_{L^{p}}^{2}
		(\| Q \|_{H^{1}}^{2})^{\frac{5p-6}{2p}}\right]
		(\| Q \|_{H^{2}}^{2})^{\frac{6-p}{2p}}
		ds\right)^{\frac{1}{2}}
		\|\nabla (v_{\varepsilon}^{i} - v_{\varepsilon})\|_{L_t^2(L^2)} \nonumber\\
		\lesssim&  \left(\int_{0}^{t}  \| \Delta Q \|_{L^{p}}^{\frac{4p}{3p-6}}
		(\| Q \|_{H^{1}}^{2})^{\frac{5p-6}{3p-6}}  ds\right)^{\frac{3p-6}{4p}}
		\|Q\|_{L_t^2(H^2)}^{\frac{6-p}{2p}}
		\|\nabla (v_{\varepsilon}^{i} - v_{\varepsilon})\|_{L_t^2(L^2)}\nonumber\\
		\lesssim& \|\Delta Q\|_{L_t^q(L^p)} \|Q\|_{L_t^\infty(H^1)}^{\frac{5p-6}{2p}} \|Q\|_{L_t^2(H^2)}^{\frac{6-p}{2p}}
		\|\nabla (v_{\varepsilon}^{i} - v_{\varepsilon})\|_{L_t^2(L^2)},\nonumber
	\end{align}
%	\begin{equation}\label{eq3.8}
%		\begin{split}
%			&\left| \int_{0}^{t} (Q{\rm Tr}(Q\Delta Q):\nabla (v_{\varepsilon}^{i} - v_{\varepsilon})) ds \right|\\
%			\lesssim& \int_{0}^{t} \| Q \|_{L^{\frac{4p}{p-2}}}^{2} \| \Delta Q \|_{L^{p}} \| \nabla (v_{\varepsilon}^{i} - v_{\varepsilon}) \|_{L^{2}} ds\\
%			\lesssim &\int_{0}^{t} \| Q \|_{W^{1,\frac{12p}{7p-6}}}^{2} \| \Delta Q \|_{L^{p}} \| \nabla (v_{\varepsilon}^{i} - v_{\varepsilon}) \|_{L^{2}} ds\\
%			\lesssim& \int_{0}^{t} \| Q \|_{H^1}^{\frac{5p-6}{2p}} \| Q \|_{W^{1,6}}^{\frac{6-p}{2p}} \| \Delta Q \|_{L^{p}} \| \nabla (v_{\varepsilon}^{i} - v_{\varepsilon}) \|_{L^{2}} ds\\
%			\lesssim& \left(\int_{0}^{t}\left[  \| \Delta Q \|_{L^{p}}^{2}
%			(\| Q \|_{H^{1}}^{2})^{\frac{5p-6}{2p}}\right]
%			(\| Q \|_{H^{2}}^{2})^{\frac{6-p}{2p}}
%			ds\right)^{\frac{1}{2}}
%			\|\nabla (v_{\varepsilon}^{i} - v_{\varepsilon})\|_{L_t^2(L^2)} \\
%			\lesssim&  \left(\int_{0}^{t}  \| \Delta Q \|_{L^{p}}^{\frac{4p}{3p-6}}
%			(\| Q \|_{H^{1}}^{2})^{\frac{5p-6}{3p-6}}  ds\right)^{\frac{3p-6}{4p}}
%			\|Q\|_{L_t^2(H^2)}^{\frac{6-p}{2p}}
%			\|\nabla (v_{\varepsilon}^{i} - v_{\varepsilon})\|_{L_t^2(L^2)}\\
%			\lesssim& \|\Delta Q\|_{L_t^q(L^p)} \|Q\|_{L_t^\infty(H^1)}^{\frac{5p-6}{2p}} \|Q\|_{L_t^2(H^2)}^{\frac{6-p}{2p}}
%			\|\nabla (v_{\varepsilon}^{i} - v_{\varepsilon})\|_{L_t^2(L^2)},
%		\end{split}
%	\end{equation}
	where we have used H\"{o}lder inequality, Gagliardo-Nirenberg inequality, $L^p$ interpolation and the Sobolev embedding $W^{1,\frac{12p}{7p-6}}\hookrightarrow L^{\frac{4p}{p-2}}$.
	Moreover, one should note that the constraint $2\leq p\leq 6$ must be imposed so that \eqref{eq3.8} holds.
	Then, we conclude from \eqref{eq3.6}, \eqref{eq3.7} and \eqref{eq3.8} that
	\begin{equation}
		\begin{split}
			|\Theta_{\xi}| \lesssim & \|\nabla (v_{\varepsilon}^{i} - v_{\varepsilon})\|_{L_t^2(L^2)} \Bigg( \sum_{k=0}^{4} \Vert Q \Vert_{L_t^{\infty}(H^{1})}^{k}
			\Vert Q \Vert_{L_t^{2}(H^{2})}\\
			& + \left(\|Q\|_{L_t^\infty(H^1)}^{\frac{3p-6}{2p}} + \|Q\|_{L_t^\infty(H^1)}^{\frac{5p-6}{2p}}\right) \|\Delta Q\|_{L_t^q(L^p)} \|Q\|_{L_t^2(H^2)}^{\frac{6-p}{2p}}\Bigg).
		\end{split}
	\end{equation}
	Now, let $i \rightarrow +\infty$ in \eqref{eq-a3}, we get
	\begin{align}\label{eq-b1}
		&\int_0^t \left( u,\partial_tv_{\varepsilon} \right)+(u,u\cdot\nabla v_{\varepsilon})-\mu (\nabla u,\nabla v_{\varepsilon})ds +L\int_0^t \left( \nabla Q \odot\nabla Q - Q\Delta Q+\Delta QQ :\nabla v_{\varepsilon}\right)ds \nonumber\\
		=& -\xi\int_0^t\left( H[Q]\left[Q +\dfrac{\mathbb{I}_3}{3}\right]+  \left[Q +\dfrac{\mathbb{I}_3}{3}\right]H[Q]: \nabla v_{\varepsilon} \right)ds  + 2\xi\int_0^t\left(  \left[Q +\dfrac{\mathbb{I}_3}{3}\right]{\rm Tr}(QH[Q]): \nabla v_{\varepsilon} \right)ds \nonumber\\
		& +(u(t),v_{\varepsilon}(t)) -(u_0(x),v_{\varepsilon}(0,x)).
	\end{align}
	On the other hand, by replacing $(u,Q)$ in \eqref{eq-a2} with $(v,R)$ and letting $\psi = u_{\varepsilon}^{i}$, one has
	\begin{align}\label{eq-a4}
		&\int_0^t \left( v,\partial_tu_{\varepsilon}^i \right)+(v,v\cdot\nabla u_{\varepsilon}^i)-\mu (\nabla v,\nabla u_{\varepsilon}^i)ds +L\int_0^t \left( \nabla R \odot\nabla R - R\Delta R+\Delta RR :\nabla u_{\varepsilon}^i\right)ds \nonumber\\
		=& -\xi\int_0^t\left( H[R]\left[R +\dfrac{\mathbb{I}_3}{3}\right]+  \left[R +\dfrac{\mathbb{I}_3}{3}\right]H[R]: \nabla u_{\varepsilon}^i \right)ds  + 2\xi\int_0^t\left(  \left[R +\dfrac{\mathbb{I}_3}{3}\right]{\rm Tr}(RH[R]): \nabla u_{\varepsilon}^i \right)ds \nonumber\\
		& +(v(t),u_{\varepsilon}^i(t)) -(v_0(x),u_{\varepsilon}^i(0,x)).
	\end{align}
%	\begin{align}\label{eq-a4}
%		&\int_0^t \left( v,\partial_tu_{\varepsilon}^i \right)+(v,v\cdot\nabla u_{\varepsilon}^i)-\mu (\nabla v,\nabla u_{\varepsilon}^i)ds +L\int_0^t \left( \nabla R \odot\nabla R - R\Delta R+\Delta RR :\nabla u_{\varepsilon}^i\right)ds \nonumber\\
%		=& \underbrace{-\xi\int_0^t\left( H[R]\left[R +\dfrac{\mathbb{I}_3}{3}\right]+  \left[R +\dfrac{\mathbb{I}_3}{3}\right]H[R]: \nabla u_{\varepsilon}^i \right)ds  + 2\xi\int_0^t\left(  \left[R +\dfrac{\mathbb{I}_3}{3}\right]{\rm Tr}(RH[R]): \nabla u_{\varepsilon}^i \right)ds}_{\Xi_\xi} \nonumber\\
%		& +(v(t),u_{\varepsilon}^i(t)) -(v_0(x),u_{\varepsilon}^i(0,x)).
%	\end{align}
	%Since we want to study the limit of previous equality as $i \rightarrow +\infty$.
	%Obviously,
	Then, it follows from Lemma~\ref{lem2.3} that
	\begin{equation}
		\left| \int_{0}^{t}(v,v \cdot  \nabla (u_\varepsilon^i- u_\varepsilon)) ds \right|\lesssim\|v\|_{L_t^\infty(L^2)}^{\frac{3p-6}{2p}} \|v\|_{L_t^2(H^1)}^{\frac{p+6}{2p}} \|\nabla (u_\varepsilon^i- u_\varepsilon)\|_{L_t^q(L^p)},
	\end{equation}
	\begin{equation}\label{eq3.12}
		\left| \int_{0}^{t}(\nabla R \odot \nabla R - R \Delta R+ \Delta R R:  \nabla (u_\varepsilon^i- u_\varepsilon)) ds \right|\lesssim\|R\|_{L_t^\infty(H^1)}^{\frac{3p-6}{2p}} \|R\|_{L_t^2(H^2)}^{\frac{p+6}{2p}} \|\nabla (u_\varepsilon^i- u_\varepsilon)\|_{L_t^q(L^p)}.
	\end{equation}
%	\begin{equation}
%		\left| \int_0^t (\Delta R R - R \Delta R:\nabla (u_{\varepsilon}^{i} - u_{\varepsilon}))ds \right| \lesssim \|R\|_{L_t^\infty(H^1)}^{\frac{3p-6}{2p}}  \|R\|_{L_t^2(H^2)}^{\frac{6+p}{2p}} \|\nabla (u_\epsilon^i- u_\epsilon)\|_{L_t^q(L^p)}.
%	\end{equation}
	Notice that most of the remaining terms in \eqref{eq-a4} are similar to that of \eqref{eq-a3}. So we focus on some terms that are not easy to deal with. That is,
	%only need to estimate
	\begin{align}\label{eq3.14}
		\left| \int_{0}^{t} (R{\rm Tr}(R\Delta R):\nabla (u_{\varepsilon}^{i} - u_{\varepsilon})) ds \right|
		& \leq \int_{0}^{t} \| R \|_{L^{\frac{4p}{p-2}}}^{2} \| \Delta R \|_{L^{2}} \| \nabla (u_{\varepsilon}^{i} - u_{\varepsilon}) \|_{L^{p}} ds\nonumber\\
		& \lesssim \int_{0}^{t} \| R \|_{W^{1,\frac{12p}{7p-6}}}^{2} \| \Delta R \|_{L^{2}} \| \nabla (u_{\varepsilon}^{i} - u_{\varepsilon}) \|_{L^{p}} ds\nonumber\\
		& \lesssim\int_{0}^{t} \| R \|_{H^1}^{\frac{5p-6}{2p}} \| R \|_{W^{1,6}}^{\frac{6-p}{2p}} \| \Delta R\|_{L^{2}} \| \nabla (u_{\varepsilon}^{i} - u_{\varepsilon}) \|_{L^{p}} ds\\
		& \lesssim\left(\int_{0}^{t}\left[  \| \nabla (u_{\varepsilon}^{i} - u_{\varepsilon}) \|_{L^{p}}^{2}
		(\| R \|_{H^{1}}^{2})^{\frac{5p-6}{2p}}\right]
		(\|R \|_{H^{2}}^{2})^{\frac{6-p}{2p}}
		ds\right)^{\frac{1}{2}}
		\|\Delta R \|_{L_t^2(L^2)} \nonumber\\
		&\lesssim \left(\int_{0}^{t}  \| \nabla (u_{\varepsilon}^{i} - u_{\varepsilon}) \|_{L^{p}}^{\frac{4p}{3p-6}}
		(\| R \|_{H^{1}}^{2})^{\frac{5p-6}{3p-6}}  ds\right)^{\frac{3p-6}{4p}}
		\|R\|_{L_t^2(H^2)}^{\frac{6-p}{2p}}
		\|\Delta R\|_{L_t^2(L^2)}\nonumber\\
		& \lesssim \|R\|_{L_t^\infty(H^1)}^{\frac{5p-6}{2p}} \|R\|_{L_t^2(H^2)}^{\frac{p+6}{2p}}
		\|\nabla (u_{\varepsilon}^{i} - u_{\varepsilon})\|_{L_t^q(L^p)}.\nonumber
	\end{align}
%	\begin{equation}
%		\begin{split}
%			\left| \int_{0}^{t} (Rtr(R\Delta R):\nabla (u_{\varepsilon}^{i} - u_{\varepsilon})) ds \right|& \lesssim \int_{0}^{t} \| R \|_{L^{\frac{4p}{p-2}}}^{2} \| \Delta R \|_{L^{2}} \| \nabla (u_{\varepsilon}^{i} - u_{\varepsilon}) \|_{L^{p}} ds\\
%			& \lesssim \int_{0}^{t} \| R \|_{W^{1,\frac{12p}{7p-6}}}^{2} \| \Delta R \|_{L^{2}} \| \nabla (u_{\varepsilon}^{i} - u_{\varepsilon}) \|_{L^{p}} ds\\
%			& \lesssim\int_{0}^{t} \| R \|_{H^1}^{\frac{5p-6}{2p}} \| R \|_{W^{1,6}}^{\frac{6-p}{2p}} \| \Delta R\|_{L^{2}} \| \nabla (u_{\varepsilon}^{i} - u_{\varepsilon}) \|_{L^{p}} ds\\
%			& \lesssim\left(\int_{0}^{t}\left[  \| \nabla (u_{\varepsilon}^{i} - u_{\varepsilon}) \|_{L^{p}}^{2}
%			(\| R \|_{H^{1}}^{2})^{\frac{5p-6}{2p}}\right]
%			(\|R \|_{H^{2}}^{2})^{\frac{6-p}{2p}}
%			ds\right)^{\frac{1}{2}}
%			\|\Delta R \|_{L_t^2(L^2)} \\
%			&\lesssim \left(\int_{0}^{t}  \| \nabla (u_{\varepsilon}^{i} - u_{\varepsilon}) \|_{L^{p}}^{\frac{4p}{3p-6}}
%			(\| R \|_{H^{1}}^{2})^{\frac{5p-6}{3p-6}}  ds\right)^{\frac{3p-6}{4p}}
%			\|R\|_{L_t^2(H^2)}^{\frac{6-p}{2p}}
%			\|\Delta R\|_{L_t^2(L^2)}\\
%			& \lesssim \|R\|_{L_t^\infty(H^1)}^{\frac{5p-6}{2p}} \|R\|_{L_t^2(H^2)}^{\frac{p+6}{2p}}
%			\|\nabla (u_{\varepsilon}^{i} - u_{\varepsilon})\|_{L_t^q(L^p)}
%		\end{split}
%	\end{equation}
	Similarly to \eqref{eq3.8} the above estimate implies $2\leq p\leq 6$.
	Then, one can send $i$ to infinity in \eqref{eq-a4} and get
	\begin{align}\label{eq-b2}
		&\int_0^t \left( v,\partial_tu_{\varepsilon} \right)+(v,v\cdot\nabla u_{\varepsilon})-\mu (\nabla v,\nabla u_{\varepsilon})ds +L\int_0^t \left( \nabla R \odot\nabla R - R\Delta R+\Delta RR :\nabla u_{\varepsilon}\right)ds \nonumber\\
		=& -\xi\int_0^t\left( H[R]\left[R +\dfrac{\mathbb{I}_3}{3}\right]+  \left[R +\dfrac{\mathbb{I}_3}{3}\right]H[R]: \nabla u_{\varepsilon} \right)ds  + 2\xi\int_0^t\left(  \left[R +\dfrac{\mathbb{I}_3}{3}\right]{\rm Tr}(RH[R]): \nabla u_{\varepsilon} \right)ds \nonumber\\
		& +(v(t),u_{\varepsilon}(t)) -(v_0(x),u_{\varepsilon}(0,x)).
	\end{align}
\noindent\text{(2).}
In this step, we first sum up \eqref{eq-b1} and \eqref{eq-b2} to have
\begin{align}\label{eq3.16}
	&\int_0^t \left( v,\partial_tu_{\varepsilon} \right) + \left( u,\partial_tv_{\varepsilon} \right) +(v,v\cdot\nabla u_{\varepsilon}) +(u,u\cdot\nabla v_{\varepsilon})-\mu (\nabla u,\nabla v_{\varepsilon})-\mu (\nabla v,\nabla u_{\varepsilon})ds \nonumber\\
	&+L\int_0^t \left( \nabla R \odot\nabla R - R\Delta R+\Delta RR :\nabla u_{\varepsilon}\right)ds 
	+L\int_0^t \left( \nabla Q \odot\nabla Q - Q\Delta Q+\Delta QQ :\nabla v_{\varepsilon}\right)ds \nonumber\\
	=& -\xi\int_0^t\left( H[R]\left[R +\dfrac{\mathbb{I}_3}{3}\right]+  \left[R +\dfrac{\mathbb{I}_3}{3}\right]H[R]: \nabla u_{\varepsilon} \right)ds  + 2\xi\int_0^t\left(  \left[R +\dfrac{\mathbb{I}_3}{3}\right]{\rm Tr}(RH[R]): \nabla u_{\varepsilon} \right)ds \nonumber\\
	& -\xi\int_0^t\left( H[Q]\left[Q +\dfrac{\mathbb{I}_3}{3}\right]+  \left[Q +\dfrac{\mathbb{I}_3}{3}\right]H[Q]: \nabla v_{\varepsilon} \right)ds  + 2\xi\int_0^t\left(  \left[Q +\dfrac{\mathbb{I}_3}{3}\right]{\rm Tr}(QH[Q]): \nabla v_{\varepsilon} \right)ds \nonumber\\
	&  +(v(t),u_{\varepsilon}(t))  +(u(t),v_{\varepsilon}(t))-(v_0(x),u_{\varepsilon}(0,x)) -(u_0(x),v_{\varepsilon}(0,x)).
\end{align}
Actually, we wish now to prove \eqref{eq:1.6} by taking the limit in \eqref{eq3.16} as $\varepsilon$ goes to zero.
Before proceeding any further, we recall that
\begin{align}
	&\int_{0}^{t} (u,\partial_{t} v_{\varepsilon})+ (v,\partial_{t} u_{\varepsilon})ds = 0,\\
	&\underset{\varepsilon \rightarrow 0}{\lim}\int_{0}^{t} (\nabla u, \nabla v_{\varepsilon})+ (\nabla v,\nabla u_{\varepsilon})ds = 2\int_{0}^{t} (\nabla u, \nabla v)ds,\\
	&\underset{\varepsilon \rightarrow 0}{\lim}
	(u_{0},v_{\varepsilon} (0,x))= \underset{\varepsilon \rightarrow 0}{\lim} (v_{0},u_{\varepsilon} (0,x))=\frac{1}{2}\|u_{0}\|_{L^2}^2,\\
	&\underset{\varepsilon \rightarrow 0}{\lim}(u(t),v_{\varepsilon}(t))= \underset{\varepsilon \rightarrow 0}{\lim} (v(t),u_{\varepsilon}(t))
	= \frac{1}{2}(u(t),v(t)).
\end{align}
These facts are well-known in \cite{Galdi-book,serrin1963,shinbrot1974}, which basically come from the choice of mollifier and the weak $L^2$ continuity of $u$. Moreover, by using Lemma~\ref{lem2.2} and Lemma~\ref{lem2.3} again, we have
\[
\begin{split}
	&\left| \int_{0}^{t}(u,u \cdot \nabla (v_{\varepsilon} - v)) ds \right|\lesssim\|u\|_{L_t^\infty(L^2)}^{\frac{5p-6}{4p}} \|u\|_{L_t^2(H^1)}^{\frac{p+6}{4p}} \|\nabla u\|_{L_t^q(L^p)}^{\frac{1}{2}} \|\nabla (v_{\varepsilon} - v)\|_{L_t^2(L^2)}\\
	&\left| \int_{0}^{t}(v,v \cdot  \nabla (u_\varepsilon- u)) ds \right|\lesssim\|v\|_{L_t^\infty(L^2)}^{\frac{3p-6}{2p}} \|v\|_{L_t^2(H^1)}^{\frac{p+6}{2p}} \|\nabla (u_\varepsilon- u)\|_{L_t^q(L^p)},
\end{split}
\]
and thus
\[
\lim_{\varepsilon\rightarrow 0}\int_0^t (v,v\cdot\nabla u_{\varepsilon}) +(u,u\cdot\nabla v_{\varepsilon})ds= \int_0^t (v,v\cdot\nabla u) +(u,u\cdot\nabla v)ds.
\]
Again, a similar argument as \eqref{eq3.3}-\eqref{eq3.8}, \eqref{eq3.12} and \eqref{eq3.14}, we also have
\begin{align*}
	&\lim_{\varepsilon\rightarrow 0} \int_0^t \left( \nabla R \odot\nabla R - R\Delta R+\Delta RR :\nabla u_{\varepsilon}\right)ds =\int_0^t \left( \nabla R \odot\nabla R - R\Delta R+\Delta RR :\nabla u\right)ds,\\
	& \lim_{\varepsilon\rightarrow 0}\int_0^t\left( H[R]\left[R +\dfrac{\mathbb{I}_3}{3}\right]+  \left[R +\dfrac{\mathbb{I}_3}{3}\right]H[R] -2\left[R +\dfrac{\mathbb{I}_3}{3}\right]{\rm Tr}(RH[R]): \nabla u_{\varepsilon} \right)ds  \\
	&\qquad\qquad\qquad=\int_0^t\left( H[R]\left[R +\dfrac{\mathbb{I}_3}{3}\right]+  \left[R +\dfrac{\mathbb{I}_3}{3}\right]H[R] -2\left[R +\dfrac{\mathbb{I}_3}{3}\right]{\rm Tr}(RH[R]): \nabla u \right)ds,  \\
	&\lim_{\varepsilon\rightarrow 0} \int_0^t \left( \nabla Q \odot\nabla Q - Q\Delta Q+\Delta QQ :\nabla v_{\varepsilon}\right)ds = \int_0^t \left( \nabla Q \odot\nabla Q - Q\Delta Q+\Delta QQ :\nabla v\right)ds,\\
	& \lim_{\varepsilon\rightarrow 0}\int_0^t\left( H[Q]\left[Q +\dfrac{\mathbb{I}_3}{3}\right]+  \left[Q +\dfrac{\mathbb{I}_3}{3}\right]H[Q]- 2\left[Q +\dfrac{\mathbb{I}_3}{3}\right]{\rm Tr}(QH[Q]): \nabla v_{\varepsilon} \right)ds  \\
	&\qquad\qquad\qquad=\int_0^t\left( H[Q]\left[Q +\dfrac{\mathbb{I}_3}{3}\right]+  \left[Q +\dfrac{\mathbb{I}_3}{3}\right]H[Q]- 2\left[Q +\dfrac{\mathbb{I}_3}{3}\right]{\rm Tr}(QH[Q]): \nabla v \right)ds.
\end{align*}
%From all the above estimates,
With the above results,
together with $\nabla\cdot u=\nabla\cdot v=0$,
%Lemma~\ref{lem.cancel},
one can obtain the desired \eqref{eq:1.6} by taking the limit in \eqref{eq3.16} as $\varepsilon\rightarrow 0$.
%	\begin{equation}\label{eq-e1}
%		\begin{split}
%			\left(u(t),v(t)\right)=& \|u_0\|_{L^2}^2 -2\mu \int_{0}^{t} (\nabla u:\nabla v) ds + \int_{0}^{t} (u,u \cdot \nabla v) + (v,v \cdot \nabla u) ds\\
%			& +L\int_0^t \left( \nabla R \odot\nabla R - R\Delta R+\Delta RR :\nabla u\right)ds \\
%			& +L\int_0^t \left( \nabla Q \odot\nabla Q - Q\Delta Q+\Delta QQ :\nabla v\right)ds \\
%			&+\xi\int_0^t\left( H[R]\left[R +\dfrac{\mathbb{I}_3}{3}\right]+  \left[R +\dfrac{\mathbb{I}_3}{3}\right]H[R]: \nabla u \right)ds  - 2\xi\int_0^t\left(  R{\rm Tr}(RH[R]): \nabla u \right)ds \\
%			& +\xi\int_0^t\left( H[Q]\left[Q +\dfrac{\mathbb{I}_3}{3}\right]+  \left[Q +\dfrac{\mathbb{I}_3}{3}\right]H[Q]: \nabla v \right)ds  - 2\xi\int_0^t\left(  Q{\rm Tr}(QH[Q]): \nabla v \right)ds \\
%		\end{split}
%	\end{equation}

\noindent\text{(3).}
Now, we turn to prove \eqref{eq:1.7} by the same procedure as for \eqref{eq:1.6}.
More precisely, one can set $\phi = R_{\varepsilon}^{i} - L \Delta R_{\varepsilon}^{i}$ in \eqref{eq-a1} and integrate by parts to get
\begin{align}\label{eq-d1}
	\int_0^t& (Q:\partial_{t} R_{\varepsilon}^{i})
	+L (\nabla Q:\partial_{t} \nabla R_{\varepsilon}^{i}) ds -(a+1)\Gamma L\int_{0}^{t}  (\nabla Q:\nabla R_{\varepsilon}^{i})ds
	- \Gamma L^{2} \int_0^t(\Delta Q:\Delta R_{\varepsilon}^{i})
	ds \nonumber\\
	&+ \int_{0}^{t} (Q:u \cdot \nabla R_{\varepsilon}^{i}) +L (u \cdot \nabla Q:\Delta R_{\varepsilon}^{i})ds +\dfrac{2\xi}{3}\int_0^t ( D:R_{\varepsilon}^{i} - L \Delta R_{\varepsilon}^{i})ds \nonumber\\
	&+ \underbrace{\int_0^t \left(\xi ( DQ+QD ) + ( \Omega Q-Q\Omega)  -2\xi   \left[Q +\dfrac{\mathbb{I}_3}{3}\right] {\rm Tr}(Q\nabla u):R_{\varepsilon}^{i} - L \Delta R_{\varepsilon}^{i} \right) ds}_{\Pi_1}\\
	%			&+ \int_0^t\left( (\xi  D+\Omega)\left[Q +\dfrac{\cI_3}{3}\right] :R_{\varepsilon}^{i} - L \Delta R_{\varepsilon}^{i} \right) + \left( \left[Q +\dfrac{\cI_3}{3}\right] (\xi D-\Omega):R_{\varepsilon}^{i} - L \Delta R_{\varepsilon}^{i} \right)ds\\
	=&  a\Gamma  \int_{0}^{t} (Q:R_{\varepsilon}^{i} )ds -\Gamma\int_0^t \left( b\left[Q^2-\dfrac{{\rm Tr}(Q^2)}{3}\mathbb{I}_3\right]-cQ{\rm Tr}(Q^2) :R_{\varepsilon}^{i} - L \Delta R_{\varepsilon}^{i} \right)ds\nonumber\\
	& +(Q(t):R_{\varepsilon}^{i}(t)) + L (\nabla Q(t):\nabla R_{\varepsilon}^{i}(t)) - (Q_{0}:R_{\varepsilon}^{i}(0,x)) - L (\nabla Q_{0}:\nabla R_{\varepsilon}^{i}(0,x)).\nonumber
\end{align}
Obviously, the new nonlinear term $\Pi_1$ comes from the nonzero parameter $\xi$, and it is equivalent to the sum of the following four terms
\[
\int_0^t  (Q\nabla u : R_{\varepsilon}^{i})ds, \int_0^t  (Q\nabla u :\Delta R_{\varepsilon}^{i}) ds, 
\int_0^t  (Q{\rm Tr}(Q\nabla u) : R_{\varepsilon}^{i})  ds,
\int_0^t  (Q{\rm Tr}(Q\nabla u) : \Delta R_{\varepsilon}^{i}) ds.
\]
Similarly, we study the previous equality by taking the limit for $i\rightarrow +\infty$, with $\varepsilon$ fixed. For this purpose we need to have some estimate of the nonlinear terms in \eqref{eq-d1} and these are done as follows:
\begin{equation}\label{eq3.21}
	\begin{split}
		&\left| \int_{0}^{t} (Q:u \cdot \nabla (R_{\varepsilon}^{i} - R_{\varepsilon})) + (Q\nabla u : R_{\varepsilon}^{i}- R_{\varepsilon}) + (Q {\rm Tr}(Q \nabla u):R_{\varepsilon}^{i} - R_{\varepsilon}) ds \right|\\
		\lesssim&  \int_{0}^{t} \| Q \|_{L^{3}} \left(\| u \|_{L^{6}} \| \nabla (R_{\varepsilon}^{i} - R_{\varepsilon}) \|_{L^{2}} + \| \nabla u \|_{L^{2}} \| (R_{\varepsilon}^{i} - R_{\varepsilon} \|_{L^{6}}\right) + \|Q\|_{L^6}^2\|\nabla u\|_{L^2}\|R_{\varepsilon}^{i} - R_{\varepsilon}\|_{L^6} ds\\
		\lesssim &  \int_{0}^{t} \left( \| Q \|_{H^1} + \|Q\|_{H^1}^2\right) \| \nabla u \|_{L^2} \| \nabla (R_{\varepsilon}^{i} - R_{\varepsilon}) \|_{L^{2}} ds\\
		\lesssim &  \sum_{n=1}^2\| Q \|_{L_t^{\infty}(H^{1})}^n\|\nabla u \|_{L_t^{2}(L^{2})}  \| \nabla (R_{\varepsilon}^{i} - R_{\varepsilon}) \|_{L_t^{2}(L^{2})},
	\end{split}
\end{equation}
\begin{equation}\label{eq3.22}
	\begin{split}
		\left| \int_{0}^{t} \left(u \cdot \nabla Q+ Q \nabla u: \Delta (R_{\varepsilon}^{i} - R_{\varepsilon})\right) ds  \right| \lesssim& \|\Delta (R_{\varepsilon}^{i} - R_{\varepsilon}) \|_{L_t^{2}(L^{2})} \Big(  \| Q \|_{L_t^{\infty}(H^{1})}^{\frac{3p-6}{2p}} \| Q \|_{L_t^{2}(H^{2})}^{\frac{6-p}{2p}}\|\nabla u\|_{L_t^q(L^p)} \\
		& + \| u \|_{L_t^{\infty}(L^2)}^{\frac{1}{2}} \| Q \|_{L_t^{\infty}(H^{1})}^{\frac{3p-6}{4p}} \| Q \|_{L_t^{2}(H^{2})}^{\frac{p+6}{4p}} \|\nabla u\|_{L_t^q(L^p)}^{\frac{1}{2}}
		 \Big),
	\end{split}
\end{equation}
\begin{equation}\label{eq3.23}
\left| \int_{0}^{t} \left(  b\left[Q^{2} - \frac{{\rm Tr}(Q^{2})}{3} \mathbb{I}_3\right] - c Q {\rm Tr}(Q^{2}) : R_{\varepsilon}^{i} - L \Delta R_{\varepsilon}^{i}\right) ds \right| \lesssim  \sum_{k=1}^{2} \Vert Q \Vert_{L_t^{\infty}(H^{1})}^{k}
	\Vert Q \Vert_{L_t^{2}(H^{2})}
	\Vert R_{\varepsilon}^{i} - \Delta R_{\varepsilon}^{i} \Vert_{L_t^{2}(L^{2})},
\end{equation}
\begin{align}\label{eq3.24}
	&\left| \int_{0}^{t} (Q {\rm Tr}(Q \nabla u):\Delta (R_{\varepsilon}^{i} - R_{\varepsilon})) ds \right|\nonumber\\
	\leq&  \int_{0}^{t} \| Q \|_{L^{\frac{4p}{p-2}}}^{2}\| \nabla u\|_{L^{p}} \| \Delta (R_{\varepsilon}^{i} - R_{\varepsilon}) \|_{L^{2}}  ds\nonumber\\
	\lesssim& \int_{0}^{t} \| Q \|_{W^{1,\frac{12p}{7p-6}}}^{2} \| \nabla u\|_{L^{p}} \| \Delta (R_{\varepsilon}^{i} - R_{\varepsilon}) \|_{L^{2}} ds\nonumber\\
	\lesssim& \int_{0}^{t} \| Q \|_{H^1}^{\frac{5p-6}{2p}} \| Q \|_{W^{1,6}}^{\frac{6-p}{2p}} \| \nabla u\|_{L^{p}} \| \Delta (R_{\varepsilon}^{i} - R_{\varepsilon}) \|_{L^{2}} ds\\
	\lesssim&  \left(\int_{0}^{t}\left[  \| \nabla u\|_{L^{p}}^2
	(\| Q \|_{H^{1}}^{2})^{\frac{5p-6}{2p}}\right]
	(\|Q \|_{H^{2}}^{2})^{\frac{6-p}{2p}}
	ds\right)^{\frac{1}{2}} \| \Delta (R_{\varepsilon}^{i} - R_{\varepsilon}) \|_{L_t^2(L^2)} \nonumber\\
	\lesssim& \left(\int_{0}^{t}  \| \nabla u \|_{L^{p}}^{\frac{4p}{3p-6}} ds\right)^{\frac{3p-6}{4p}}
	\| Q \|_{L_t^{\infty}(H^{1})}^{\frac{5p-6}{2p}}
	\| Q \|_{L_t^{2}(H^{2})}^{\frac{6-p}{2p}}
	\|\Delta (R_{\varepsilon}^{i} - R_{\varepsilon})\|_{L_t^{2}(L^{2})}\nonumber\\
	\lesssim&\| \nabla u \|_{L_t^{q}(L^{p})}
	\| Q \|_{L_t^{\infty}(H^{1})}^{\frac{5p-6}{2p}}
	\| Q \|_{L_t^{2}(H^{2})}^{\frac{6-p}{2p}}
	\|\Delta (R_{\varepsilon}^{i} - R_{\varepsilon})\|_{L_t^{2}(L^{2})}.\nonumber
\end{align}
%\begin{equation}\label{eq3.24}
%	\begin{split}
%		&\left| \int_{0}^{t} (Q {\rm Tr}(Q \nabla u):\Delta (R_{\varepsilon}^{i} - R_{\varepsilon})) ds \right|\\
%		\leq&  \int_{0}^{t} \| Q \|_{L^{\frac{4p}{p-2}}}^{2}\| \nabla u\|_{L^{p}} \| \Delta (R_{\varepsilon}^{i} - R_{\varepsilon}) \|_{L^{2}}  ds\\
%		 \lesssim& \int_{0}^{t} \| Q \|_{W^{1,\frac{12p}{7p-6}}}^{2} \| \nabla u\|_{L^{p}} \| \Delta (R_{\varepsilon}^{i} - R_{\varepsilon}) \|_{L^{2}} ds\\
%		 \lesssim& \int_{0}^{t} \| Q \|_{H^1}^{\frac{5p-6}{2p}} \| Q \|_{W^{1,6}}^{\frac{6-p}{2p}} \| \nabla u\|_{L^{p}} \| \Delta (R_{\varepsilon}^{i} - R_{\varepsilon}) \|_{L^{2}} ds\\
%		\lesssim&  \left(\int_{0}^{t}\left[  \| \nabla u\|_{L^{p}}^2
%		(\| Q \|_{H^{1}}^{2})^{\frac{5p-6}{2p}}\right]
%		(\|Q \|_{H^{2}}^{2})^{\frac{6-p}{2p}}
%		ds\right)^{\frac{1}{2}} \| \Delta (R_{\varepsilon}^{i} - R_{\varepsilon}) \|_{L_t^2(L^2)} \\
%		 \lesssim& \left(\int_{0}^{t}  \| \nabla u \|_{L^{p}}^{\frac{4p}{3p-6}} ds\right)^{\frac{3p-6}{4p}}
%		\| Q \|_{L_t^{\infty}(H^{1})}^{\frac{5p-6}{2p}}
%		\| Q \|_{L_t^{2}(H^{2})}^{\frac{6-p}{2p}}
%		\|\Delta (R_{\varepsilon}^{i} - R_{\varepsilon})\|_{L_t^{2}(L^{2})}\\
%		\lesssim&\| \nabla u \|_{L_t^{q}(L^{p})}
%		\| Q \|_{L_t^{\infty}(H^{1})}^{\frac{5p-6}{2p}}
%		\| Q \|_{L_t^{2}(H^{2})}^{\frac{6-p}{2p}}
%		\|\Delta (R_{\varepsilon}^{i} - R_{\varepsilon})\|_{L_t^{2}(L^{2})}.
%	\end{split}
%\end{equation}
Here we remark that \eqref{eq3.22} follows directly from Lemma~\ref{lem2.4}, and \eqref{eq3.24} is again only valid for $2\leq p\leq 6$.

From the above estimates, letting $i \rightarrow +\infty$ in \eqref{eq-d1}, we then get
\begin{equation}\label{eq-d2}
	\begin{split}
		\int_0^t& (Q:\partial_{t} R_{\varepsilon})
		+L (\nabla Q:\partial_{t} \nabla R_{\varepsilon}) ds -(a+1)\Gamma L\int_{0}^{t}  (\nabla Q:\nabla R_{\varepsilon})ds
		- \Gamma L^{2} \int_0^t(\Delta Q:\Delta R_{\varepsilon})
		ds \\
		&+ \int_{0}^{t} (Q:u \cdot \nabla R_{\varepsilon}) +L (u \cdot \nabla Q:\Delta R_{\varepsilon}) +\dfrac{2\xi}{3}\int_0^t ( D:R_{\varepsilon} - L \Delta R_{\varepsilon})ds \\
		&+ \int_0^t \left(\xi ( DQ+QD ) + ( \Omega Q-Q\Omega)  -2\xi   \left[Q +\dfrac{\mathbb{I}_3}{3}\right] {\rm Tr}(Q\nabla u):R_{\varepsilon} - L \Delta R_{\varepsilon} \right) ds\\
		=&  a\Gamma  \int_{0}^{t} (Q:R_{\varepsilon} )ds -\Gamma\int_0^t \left( b\left[Q^2-\dfrac{{\rm Tr}(Q^2)}{3}\mathbb{I}_3\right]-cQ{\rm Tr}(Q^2) :R_{\varepsilon}  - L \Delta R_{\varepsilon}  \right)ds\\
		& +(Q(t):R_{\varepsilon} (t)) + L (\nabla Q(t):\nabla R_{\varepsilon} (t)) - (Q_{0}:R_{\varepsilon} (0,x)) - L (\nabla Q_{0}:\nabla R_{\varepsilon} (0,x)).
	\end{split}
\end{equation}
Analogously, by substituting $(u,Q)$ with $(v,R)$ in \eqref{eq-a1} and letting $\phi = Q_{\varepsilon}^{i} - L  \Delta Q_{\varepsilon}^{i}$, one can similarly find
\begin{align}\label{eq-d3}
	\int_0^t& (R:\partial_{t} Q_{\varepsilon}^{i})
	+L (\nabla R:\partial_{t} \nabla Q_{\varepsilon}^{i}) ds -(a+1)\Gamma L\int_{0}^{t}  (\nabla R:\nabla Q_{\varepsilon}^{i})ds
	- \Gamma L^{2} \int_0^t(\Delta R:\Delta Q_{\varepsilon}^{i})
	ds \nonumber\\
	&+ \int_{0}^{t} (R:v \cdot \nabla Q_{\varepsilon}^{i}) +L (v \cdot \nabla R:\Delta Q_{\varepsilon}^{i}) +\dfrac{2\xi}{3}\int_0^t ( \widebar{D}:Q_{\varepsilon}^{i} - L \Delta Q_{\varepsilon}^{i})ds \nonumber\\
	&+ \int_0^t \left(\xi ( \widebar{D}R+R\widebar{D} ) + ( \widebar{\Omega} R-R\widebar{\Omega})  -2\xi   \left[R +\dfrac{\mathbb{I}_3}{3}\right] {\rm Tr}(R\nabla v):Q_{\varepsilon}^{i} - L \Delta Q_{\varepsilon}^{i} \right) ds\\
	=&  a\Gamma  \int_{0}^{t} (R:Q_{\varepsilon}^{i} )ds -\Gamma\int_0^t \left( b\left[R^2-\dfrac{{\rm Tr}(R^2)}{3}\mathbb{I}_3\right]-cR{\rm Tr}(R^2) :Q_{\varepsilon}^{i} - L \Delta Q_{\varepsilon}^{i} \right)ds\nonumber\\
	& +(R(t):Q_{\varepsilon}^{i}(t)) + L (\nabla R(t):\nabla Q_{\varepsilon}^{i}(t)) - (R_{0}:Q_{\varepsilon}^{i}(0,x)) - L (\nabla R_{0}:\nabla Q_{\varepsilon}^{i}(0,x)),\nonumber
\end{align}
%
%\begin{equation}\label{eq-d3}
%	\begin{split}
%		\int_0^t& (R:\partial_{t} Q_{\varepsilon}^{i})
%		+L (\nabla R:\partial_{t} \nabla Q_{\varepsilon}^{i}) ds -(a+1)\Gamma L\int_{0}^{t}  (\nabla R:\nabla Q_{\varepsilon}^{i})ds
%		- \Gamma L^{2} \int_0^t(\Delta R:\Delta Q_{\varepsilon}^{i})
%		ds \\
%		&+ \int_{0}^{t} (R:v \cdot \nabla Q_{\varepsilon}^{i}) +L (v \cdot \nabla R:\Delta Q_{\varepsilon}^{i}) +\dfrac{2\xi}{3}\int_0^t ( \widebar{D}:Q_{\varepsilon}^{i} - L \Delta Q_{\varepsilon}^{i})ds \\
%		&+ \int_0^t \left(\xi ( \widebar{D}R+R\widebar{D} ) + ( \widebar{\Omega} R-R\widebar{\Omega})  -2\xi   \left[R +\dfrac{\cI_3}{3}\right] tr(R\nabla v):Q_{\varepsilon}^{i} - L \Delta Q_{\varepsilon}^{i} \right) ds\\
%		=&  a\Gamma  \int_{0}^{t} (R:Q_{\varepsilon}^{i} )ds -\Gamma\int_0^t \left( b\left[R^2-\dfrac{tr(R^2)}{3}\cI_3\right]-cRtr(R^2) :Q_{\varepsilon}^{i} - L \Delta Q_{\varepsilon}^{i} \right)ds\\
%		& +(R(t):Q_{\varepsilon}^{i}(t)) + L (\nabla R(t):\nabla Q_{\varepsilon}^{i}(t)) - (R_{0}:Q_{\varepsilon}^{i}(0,x)) - L (\nabla R_{0}:\nabla Q_{\varepsilon}^{i}(0,x))
%	\end{split}
%\end{equation}
where
\[
\widebar{D}:=\frac{1}{2}\left( \nabla v + \nabla v^{T} \right),\quad \widebar{\Omega}:=\frac{1}{2}\left( \nabla v - \nabla v^{T} \right).
\]
Since the estimates of certain terms in the above equation are similar to \eqref{eq3.21}, \eqref{eq3.22}, we just show some tricky ones in the following:
%$\Pi_2$ is equivalent with
%\begin{equation}
%	\int_0^t  (R\nabla v : Q_{\varepsilon}^{i})ds, \int_0^t  (R\nabla v :\Delta Q_{\varepsilon}^{i}) ds, 
%	\int_0^t  (Rtr(R\nabla v) : Q_{\varepsilon}^{i})  ds,
%	\int_0^t  (Rtr(R\nabla v) : \Delta Q_{\varepsilon}^{i})  ds
%\end{equation}
%Then
%\begin{equation}
%	\begin{split}
%		&\left| \int_{0}^{t} (R:v \cdot \nabla (Q_{\varepsilon}^{i} - Q_{\varepsilon})) + (R\nabla v : Q_{\varepsilon}^{i}- Q_{\varepsilon}) + (R tr(R \nabla v):Q_{\varepsilon}^{i} - Q_{\varepsilon}) ds \right|\\
%		\lesssim &  \sum_{n=1}^2\| R \|_{L^{\infty}(0,T;H^{1})}^n\|\nabla v \|_{L^{2}(0,T;L^{2})}  \| \nabla (Q_{\varepsilon}^{i} - Q_{\varepsilon}) \|_{L^{2}(0,T;L^{2})}
%	\end{split}
%\end{equation}
\begin{align}\label{eq3.27}
	\left| \int_{0}^{t} (R {\rm Tr}(R \nabla v):\Delta (Q_{\varepsilon}^{i} - Q_{\varepsilon})) ds \right| \leq &  \int_{0}^{t} \| R \|_{L^{\frac{4p}{p-2}}}^{2}\| \nabla v\|_{L^{2}} \| \Delta (Q_{\varepsilon}^{i} - Q_{\varepsilon}) \|_{L^{p}}  ds\nonumber\\
	\lesssim&  \int_{0}^{t} \| R \|_{W^{1,\frac{12p}{7p-6}}}^{2} \| \nabla v\|_{L^{2}} \| \Delta (Q_{\varepsilon}^{i} - Q_{\varepsilon}) \|_{L^{p}} ds\nonumber\\
	\lesssim& \int_{0}^{t} \| R \|_{H^1}^{\frac{5p-6}{2p}} \| R \|_{W^{1,6}}^{\frac{6-p}{2p}} \| \nabla v\|_{L^{2}} \| \Delta (Q_{\varepsilon}^{i} - Q_{\varepsilon}) \|_{L^{p}} ds\\
	\lesssim& \left(\int_{0}^{t}\left[  \| \Delta (Q_{\varepsilon}^{i} - Q_{\varepsilon})\|_{L^{p}}^2
	(\| R \|_{H^{1}}^{2})^{\frac{5p-6}{2p}}\right]
	(\|R \|_{H^{2}}^{2})^{\frac{6-p}{2p}}
	ds\right)^{\frac{1}{2}} \| \nabla v \|_{L_t^2(L^2)} \nonumber\\
	\lesssim& \left(\int_{0}^{t}  \| \Delta (Q_{\varepsilon}^{i} - Q_{\varepsilon}) \|_{L^{p}}^{\frac{4p}{3p-6}} ds\right)^{\frac{3p-6}{4p}}
	\| R \|_{L_t^{\infty}(H^{1})}^{\frac{5p-6}{2p}}
	\| R \|_{L_t^{2}(H^{2})}^{\frac{6-p}{2p}}
	\| \nabla v \|_{L_t^{2}(L^{2})}\nonumber\\
	\lesssim& \| \nabla v \|_{L_t^{2}(L^{2})}
	\| R \|_{L_t^{\infty}(H^{1})}^{\frac{5p-6}{2p}}
	\| R \|_{L_t^{2}(H^{2})}^{\frac{6-p}{2p}}
	\|\Delta (Q_{\varepsilon}^{i} - Q_{\varepsilon})\|_{L_t^{q}(L^{p})},\nonumber
\end{align}
\begin{equation}\label{eq3.28}
	\begin{split}
		\left| \int_{0}^{T} \left(v\cdot \nabla R+ R \nabla v: \Delta (Q_{\varepsilon}^{i} - Q_{\varepsilon})\right) dt  \right|\lesssim&\|\Delta (Q_{\varepsilon}^{i} - Q_{\varepsilon})\|_{L_t^{q}(L^{p})} \Bigg[\| v \|_{L_t^{\infty}(L^{2})}^{\frac{3p-6}{4p}}  \| R \|_{L_t^{\infty}(H^{1})}^{\frac{3p-6}{4p}}\left(\| v \|_{L_t^{2}(H^{1})}^{\frac{p+6}{2p}}+ \| R \|_{L_t^{2}(H^{2})}^{\frac{p+6}{2p}}\right) \\
		&+ \| R \|_{L_t^{\infty}(H^{1})}^{\frac{3p-6}{2p}} \| R \|_{L_t^{2}(H^{2})}^{\frac{6-p}{2p}} \| \nabla v \|_{L_t^{2}(L^{2})}\Bigg].
	\end{split}
\end{equation}
One should note that \eqref{eq3.28} is owing to Lemma~\ref{lem2.4}, and the estimate \eqref{eq3.27} only holds for $2\leq p\leq 6$.

Passing to the limit as $i \rightarrow +\infty$ in \eqref{eq-d3} we then get
\begin{equation}\label{eq-d4}
	\begin{split}
		\int_0^t& (R:\partial_{t} Q_{\varepsilon})
		+L (\nabla R:\partial_{t} \nabla Q_{\varepsilon}) ds -(a+1)\Gamma L\int_{0}^{t}  (\nabla R:\nabla Q_{\varepsilon})ds
		- \Gamma L^{2} \int_0^t(\Delta R:\Delta Q_{\varepsilon})
		ds \\
		&+ \int_{0}^{t} (R:v \cdot \nabla Q_{\varepsilon}) +L (v \cdot \nabla R:\Delta Q_{\varepsilon}) +\dfrac{2\xi}{3}\int_0^t ( \widebar{D}:Q_{\varepsilon} - L \Delta Q_{\varepsilon})ds \\
		&+ \int_0^t \left(\xi ( \widebar{D}R+R\widebar{D} ) + ( \widebar{\Omega} R-R\widebar{\Omega})  -2\xi   \left[R +\dfrac{\mathbb{I}_3}{3}\right] {\rm Tr}(R\nabla v):Q_{\varepsilon} - L \Delta Q_{\varepsilon} \right) ds\\
		=&  a\Gamma  \int_{0}^{t} (R:Q_{\varepsilon} )ds -\Gamma\int_0^t \left( b\left[R^2-\dfrac{{\rm Tr}(R^2)}{3}\mathbb{I}_3\right]-cR{\rm Tr}(R^2) :Q_{\varepsilon} - L \Delta Q_{\varepsilon} \right)ds\\
		& +(R(t):Q_{\varepsilon}(t)) + L (\nabla R(t):\nabla Q_{\varepsilon}(t)) - (R_{0}:Q_{\varepsilon}(0,x)) - L (\nabla R_{0}:\nabla Q_{\varepsilon}(0,x)).
	\end{split}
\end{equation}

\noindent\text{(4).}
Since the summation of \eqref{eq-d2} and \eqref{eq-d4} gives
\begin{equation}\label{eq3.30}
	\begin{split}
		\int_0^t& (Q:\partial_{t} R_{\varepsilon})
		 + (R:\partial_{t} Q_{\varepsilon})+L (\nabla Q:\partial_{t} \nabla R_{\varepsilon})
		+L (\nabla R:\partial_{t} \nabla Q_{\varepsilon}) ds \\
		& -(a+1)\Gamma L\int_{0}^{t}  (\nabla Q:\nabla R_{\varepsilon})+  (\nabla R:\nabla Q_{\varepsilon})ds
		- \Gamma L^{2} \int_0^t(\Delta Q:\Delta R_{\varepsilon})+(\Delta R:\Delta Q_{\varepsilon})
		ds \\
		&+ \int_{0}^{t} (R:v \cdot \nabla Q_{\varepsilon}) +(Q:u \cdot \nabla R_{\varepsilon}) +L (u \cdot \nabla Q:\Delta R_{\varepsilon})+L (v \cdot \nabla R:\Delta Q_{\varepsilon})ds\\
		& +\dfrac{2\xi}{3}\int_0^t ( \widebar{D}:Q_{\varepsilon} - L \Delta Q_{\varepsilon})+ ( D:R_{\varepsilon} - L \Delta R_{\varepsilon})ds - a\Gamma  \int_{0}^{t} (R:Q_{\varepsilon} )+ (Q:R_{\varepsilon} )ds\\
		&+ \int_0^t \left(\xi ( \widebar{D}R+R\widebar{D} ) + ( \widebar{\Omega} R-R\widebar{\Omega})  -2\xi   \left[R +\dfrac{\mathbb{I}_3}{3}\right] {\rm Tr}(R\nabla v):Q_{\varepsilon} - L \Delta Q_{\varepsilon} \right) ds\\
		&+ \int_0^t \left(\xi ( DQ+QD ) + ( \Omega Q-Q\Omega)  -2\xi   \left[Q +\dfrac{\mathbb{I}_3}{3}\right] {\rm Tr}(Q\nabla u):R_{\varepsilon} - L \Delta R_{\varepsilon} \right) ds\\
		=&   -\Gamma\int_0^t \left( b\left[R^2-\dfrac{{\rm Tr}(R^2)}{3}\mathbb{I}_3\right]-cR{\rm Tr}(R^2) :Q_{\varepsilon} - L \Delta Q_{\varepsilon} \right)ds\\
		& -\Gamma\int_0^t \left( b\left[Q^2-\dfrac{{\rm Tr}(Q^2)}{3}\mathbb{I}_3\right]-cQ{\rm Tr}(Q^2) :R_{\varepsilon}  - L \Delta R_{\varepsilon}  \right)ds\\
		& +(R(t):Q_{\varepsilon}(t)) + L (\nabla R(t):\nabla Q_{\varepsilon}(t)) - (R_{0}:Q_{\varepsilon}(0,x)) - L (\nabla R_{0}:\nabla Q_{\varepsilon}(0,x))\\
		& +(Q(t):R_{\varepsilon} (t)) + L (\nabla Q(t):\nabla R_{\varepsilon} (t)) - (Q_{0}:R_{\varepsilon} (0,x)) - L (\nabla Q_{0}:\nabla R_{\varepsilon} (0,x)).
	\end{split}
\end{equation}
Similarly to the proof of \eqref{eq:1.6}, by passing to the $\varepsilon\rightarrow 0$ limit in the previous equality, one can derive \eqref{eq:1.7}.
For simplicity, we omit the details here and leave it to the interested readers.
We also remark that, in order to obtain the convergence of every term in \eqref{eq3.30} as $\varepsilon$ goes to zero, we need to achieve some estimates for the nonlinear terms, which could be treated in the same way as \eqref{eq3.21}-\eqref{eq3.24} \eqref{eq3.27} and \eqref{eq3.28}.

\noindent\text{(5).}
Finally, substituting $(R,v)$ by $(Q,u)$ in \eqref{eq:1.6} and \eqref{eq:1.7},
and adding them together to get
%a straightforward computation gives
%	\begin{equation}
%		\begin{split}
%			
%		\end{split}
%	\end{equation}
\begin{align}
	&\|u\|_{L^2}^2 + \|Q\|_{L^2}^2+ L \|\nabla Q\|_{L^2}^2\nonumber\\
	=& \|u_0\|_{L^2}^2 +\|Q_0\|_{L^2}^2 + L \|\nabla Q_0\|_{L^2}^2 -2\mu \int_{0}^{t} \|\nabla u\|_{L^2}^2 ds- 2a\Gamma  \int_{0}^{t} \|Q\|_{L^2}^2ds\nonumber\\
	&  - 2(a+1)\Gamma L\int_{0}^{t} \|\nabla Q\|_{L^2}^2ds - 2\Gamma L^{2} \int_0^t \|\Delta Q\|_{L^2}^2
	ds \nonumber\\
	& + \underbrace{2\int_{0}^{t} (u,u \cdot \nabla u) +(Q:u \cdot \nabla Q)ds }_{\mathcal{K}_1} + \underbrace{2L\int_0^t \left( \nabla Q \odot\nabla Q  :\nabla u\right) + (u \cdot \nabla Q:\Delta Q)ds }_{\mathcal{K}_2} \nonumber\\
	&  +\underbrace{2L\int_0^t \left( \Delta QQ- Q\Delta Q:\nabla u\right) + \left( Q\Omega-\Omega Q:\Delta Q\right) ds}_{\mathcal{K}_3} + \underbrace{ 2\int_0^t \left( \Omega Q-Q\Omega  :Q  \right) ds}_{\mathcal{K}_4}\nonumber\\
	&  +\underbrace{ \dfrac{4\xi}{3}\int_0^t ( D:Q - L \Delta Q)+ (L\Delta Q-aQ:\nabla u)ds}_{\mathcal{K}_5}\\
	& +\underbrace{2\xi\int_0^t\left( L\Delta QQ+  LQ\Delta Q -2aQ^2: \nabla u \right) + \left( DQ+QD :Q - L \Delta Q \right) ds }_{\mathcal{K}_6}   \nonumber\\
	& \underbrace{- 4\xi\int_0^t\left(  Q{\rm Tr}(QH): \nabla u \right) + \left(  Q {\rm Tr}(Q\nabla u):Q - L \Delta Q \right)ds }_{\mathcal{K}_7}\nonumber\\
	&  +2\Gamma\int_0^t \left( b\left[Q^2-\dfrac{{\rm Tr}(Q^2)}{3}\mathbb{I}_3\right]-cQ{\rm Tr}(Q^2) :Q - L \Delta Q \right)ds\nonumber\\
	&  +\dfrac{4\xi}{3}\int_0^t \left( b\left[Q^2-\dfrac{{\rm Tr}(Q^2)}{3}\mathbb{I}_3\right]-cQ{\rm Tr}(Q^2) :\nabla u \right)ds\nonumber\\
	&  +4\xi\int_0^t \left( Q\left\lbrace b\left[Q^2-\dfrac{{\rm Tr}(Q^2)}{3}\mathbb{I}_3\right]-cQ{\rm Tr}(Q^2)\right\rbrace :\nabla u \right)ds.\nonumber
\end{align}
	Obviously, it follows from Lemma~\ref{lem.cancel} that $\mathcal{K}_i=0$ ($1\leq i\leq 4$). In addition, it is easy to check that
	\begin{align}
		&\mathcal{K}_5= \dfrac{4(1-a)\xi}{3}\int_0^t ( D:Q)ds,\nonumber\\
		&\mathcal{K}_6= 4(1-a)\xi \int_0^t ( Q^2:\nabla u)ds ,\\
		&\mathcal{K}_7= 4(a-1)\xi\int_0^t \left(  Q {\rm Tr}(Q\nabla u):Q\right)ds- 4\xi\int_0^t \left( b\left[Q^2-\dfrac{{\rm Tr}(Q^2)}{3}\mathbb{I}_3\right]-cQ{\rm Tr}(Q^2) :Q {\rm Tr}(Q\nabla u) \right)ds.\nonumber
	\end{align}
    From all these facts, we can get the desired energy equality \eqref{energy_eq} by a straightforward computation.
    This completes the proof of Theorem~\ref{thm1.1}.

\section{Weak-strong uniqueness}\label{sec4}
In this section, we will show the weak-strong uniqueness for the Leray-Hopf type weak solution of system \eqref{eq1.1} for $\xi\ne 0$. In order to do so, we study a system in terms of the difference of $(Q,u)$ and $(R,v)$, which will come from the energy inequality \eqref{eq1.4} and equalities in Theorem~\ref{thm1.1}. Similarly to the proof of \cite[Theorem~1.5]{weak-strong-3d}, the key step is to achieve some estimates for the nonlinear terms, especially the new one in the case of $\xi\ne 0$.
Then, we can obtian the desired result by using Gr\"{o}nwall's inequality.

\noindent\textbf{Proof of Theorem~\ref{thm1.2}.}
To begin with, we will show the weak-strong uniqueness holds for the following condition  instead:
\begin{equation}\label{eq-4.1}
	\left(\Delta Q,\nabla u\right)\in L^q\left(0,T;L^p\right)\quad{\rm and}\quad \dfrac{2}{q}+\dfrac{3}{p}=\frac{3}{2},\,\,2\leq p \leq 6.
\end{equation}
Let us denote that
\[
G := R - Q,\quad \omega := v - u\quad{\rm and}\quad \widetilde{\Omega}:=\widebar{\Omega}-\Omega.
\]
Since a straightforward computation gives
	\begin{equation}\label{eq4.1}
		\begin{split}
			&\|\omega(t)\|_{L^2}^2+ \|G(t)\|_{L^2}^2+ L\|\nabla G(t)\|_{L^2}^2\\
			=& \|v(t)\|_{L^2}^2+ \|R(t)\|_{L^2}^2  +L\|\nabla R(t)\|_{L^2}^2 + \|u(t)\|_{L^2}^2+ \| Q(t)\|_{L^2}^2 +L\|\nabla Q(t)\|_{L^2}^2 \\
			&-2(v(t),u(t)) - 2(R(t),Q(t)) -2L(\nabla R(t),\nabla Q(t)).
		\end{split}
	\end{equation}
%According to Theorem~\ref{thm1.1} and the definition of Leray-Hopf type weak solution, 
We then substitute \eqref{eq1.4}, \eqref{eq:1.6}, \eqref{eq:1.7}, \eqref{energy_eq} into \eqref{eq4.1} to have
	\begin{align}\label{eq4.2}
		%\begin{split}
			&\|\omega(t)\|_{L^2}^2+ L\|G(t)\|_{L^2}^2+ \|\nabla G(t)\|_{L^2}^2\notag\\
			\leq & -2\mu \int_{0}^{t} \|\nabla \omega\|_{L^2}^2 ds -2a\Gamma  \int_{0}^{t} \|G\|_{L^2}^2ds -2(a+1)\Gamma L\int_{0}^{t} \|\nabla G\|_{L^2}^2ds -2\Gamma L^{2} \int_0^t \|\Delta G\|_{L^2}^2
			ds \notag\\
			& \underbrace{+2 \int_{0}^{t} (u,\omega \cdot \nabla v) ds}_{\mathcal{T}_1} \underbrace{-2L\int_0^t \left( \nabla R \odot\nabla R :\nabla u\right) + \left( \nabla Q \odot\nabla Q :\nabla v\right)ds}_{\mathcal{T}_2}\notag\\
			&\underbrace{-2 \int_{0}^{t} (Q:u \cdot \nabla R)+ (R:v \cdot \nabla Q) +L (v \cdot \nabla R:\Delta Q) +L (u \cdot \nabla Q:\Delta R) ds}_{\mathcal{T}_3} \notag\\
			& \underbrace{+2 \Gamma\int_0^t \left( \mathcal{M}[R]- \mathcal{M}[Q] :G - L \Delta G \right)ds + 4(1-a)\xi \int_0^t  \left( RG+GQ:\nabla \omega\right) ds}_{\mathcal{T}_4}\notag\\
			& \underbrace{+2L\int_0^t \left(  R\Delta R-\Delta RR :\nabla u\right) + \left( Q\Delta Q-\Delta QQ :\nabla v\right)ds}_{\mathcal{T}_5} \underbrace{+\dfrac{4\xi}{3}(1-a)\int_0^t ( G:\nabla \omega) ds}_{\mathcal{T}_6} \\
			&\underbrace{-2 \int_0^t \left(  \Omega Q-Q\Omega :R - L \Delta R\right) + \left( \widebar{\Omega} R-R\widebar{\Omega}:Q - L \Delta Q \right) ds}_{\mathcal{T}_7}\notag\\
			&\underbrace{-2 \xi\int_0^t \left(  DQ+QD -2RD  :R - L \Delta R\right)+\left( \widebar{D}R+R\widebar{D}- 2Q\widebar{D}  :Q - L \Delta Q \right) ds}_{\mathcal{T}_8}\notag\\
			& \underbrace{ +4\xi\int_0^t (G{\rm Tr}(R\nabla \omega) + Q{\rm Tr}(G\nabla \omega) :Q-L\Delta Q) - (Q{\rm Tr}(G\nabla u) + G{\rm Tr}(R\nabla u) :G-L\Delta G)  ds}_{\mathcal{T}_9}\notag\\
			&  \underbrace{+4\xi\int_0^t \left( \left[R + \dfrac{\mathbb{I}_3}{3}\right] \mathcal{M}[R]- \left[Q + \dfrac{\mathbb{I}_3}{3}\right] \mathcal{M}[Q] :\nabla \omega \right) ds}_{\mathcal{T}_{10}}\notag\\
			& \underbrace{+ 4(1-a)\xi \int_0^t \left(  Q {\rm Tr}(Q\nabla \omega):Q\right) -\left(  R {\rm Tr}(R\nabla \omega):R\right)ds}_{\mathcal{T}_{11}}\notag\\
			& \underbrace{+4\xi\int_0^t  \left(\mathcal{M}[Q]:Q {\rm Tr}(Q\nabla \omega) \right)- \left(\mathcal{M}[R]:R {\rm Tr}(R\nabla \omega) \right)ds}_{\mathcal{T}_{12}},\notag
		%\end{split}
	\end{align}
	 where
	\[
	\mathcal{M}[Q]:= b\left[Q^2-\dfrac{{\rm Tr}(Q^2)}{3}\mathbb{I}_3\right]-cQ{\rm Tr}(Q^2).
	\]
	It is easy to check that
	\[
	\int_0^t \left( u,\omega\cdot\nabla v \right)ds= \int_0^t \left( u,\omega\cdot\nabla \omega \right)ds.
	\]
%	where the following fact is used
%	\[
%	\int_0^t \left( u,\omega\cdot\nabla u \right)ds =0.
%	\]
	Then, similarly to Lemma~\ref{lem2.2}, $\mathcal{T}_1$ and $\mathcal{T}_6$ are estimated as
	\begin{equation}
		\begin{split}
			\left|\mathcal{T}_1+ \mathcal{T}_6\right|\leq &C\int_0^t\int_{\R^3}|u||\omega||\nabla\omega|+ |G||\nabla \omega| dxds\\
			\leq & C\int_0^t\|u\|_{L^{\frac{12p}{6+p}}}\|\omega\|_{L^{\frac{12p}{5p-6}}}\|\nabla \omega\|_{L^2} + \|G\|_{L^2} \|\nabla\omega\|_{L^2}ds\\
			\leq & \varepsilon\int_0^t\|\nabla\omega\|_{L^2}^2ds + C(\varepsilon)\int_0^t\|G\|_{L^2}^2ds + \int_0^t\|\nabla u\|_{L^p}^{\frac{1}{2}}\| u\|_{L^2}^{\frac{1}{2}} \|\omega\|_{L^2}^{\frac{3p-6}{4p}}\|\omega\|_{L^6}^{\frac{p+6}{4p}}\|\nabla\omega\|_{L^2}ds \\
			\leq& \varepsilon\int_0^t\|\nabla\omega\|_{L^2}^2ds + C(\varepsilon)\int_0^t\|G\|_{L^2}^2ds+ \int_0^t\|\nabla u\|_{L^p}^{\frac{1}{2}}\| u\|_{L^2}^{\frac{1}{2}} \|\omega\|_{L^2}^{\frac{3p-6}{4p}}\|\nabla\omega\|_{L^2}^{\frac{5p+6}{4p}}ds \\
			\leq& \varepsilon\int_0^t\|\nabla\omega\|_{L^2}^2ds + C(\varepsilon)\int_0^t\|G\|_{L^2}^2  +\left(\|\nabla u\|_{L^p}^{\frac{1}{2}}\| u\|_{L^2}^{\frac{1}{2}} \|\omega\|_{L^2}^{\frac{3p-6}{4p}} \right)^{\frac{8p}{3p-6}}ds \\
			\leq& \varepsilon\int_0^t\|\nabla\omega\|_{L^2}^2ds + C(\varepsilon) \int_0^t \underbrace{(1 +\| u\|_{L^2}^{\frac{4p}{3p-6}}\|\nabla u\|_{L^p}^{\frac{4p}{3p-6}} )}_{\mathcal{N}_1}\left(\|G\|_{L^2}^2+\|\omega\|_{L^2}^2\right)ds,
		\end{split}
	\end{equation}
	where we have used H\"{o}lder inequality, $L^p$ interpolation, Gagliardo-Nirenberg inequality and Young's inequality.
	
Again, by using the identities shown in Lemma~\ref{lemB4}, we also have
	\begin{align}
			\left|\mathcal{T}_2+ \mathcal{T}_3 \right|=& \left|2\int_0^t (\Delta G,u\cdot\nabla G) -(\Delta Q,\omega\cdot\nabla G)-(G:\omega\cdot\nabla Q)ds\right|\nonumber\\
			\leq &C\int_0^t\|u\|_{L^{\frac{12p}{p+6}}}\|\nabla G\|_{L^{\frac{12p}{5p-6}}}\|\Delta G\|_{L^2}+ \|\Delta Q\|_{L^p}\|\nabla G\|_{L^{\frac{12p}{7p-6}}}\|\omega\|_{L^{\frac{12p}{5p-6}}}+ \|G\|_{L^2}\|\omega\|_{L^6}\|\nabla Q\|_{L^3} ds\nonumber\\
			\leq &C\int_0^t  \|\nabla\omega\|_{L^2}\|G\|_{L^2}\|\nabla Q\|_{H^1} +\|\nabla u\|_{L^p}^{\frac{1}{2}} \| u\|_{L^2}^{\frac{1}{2}} \|\nabla G\|_{L^2}^{\frac{3p-6}{4p}}\|\nabla G\|_{L^6}^{\frac{p+6}{4p}}\|\Delta G\|_{L^2}ds\nonumber\\
			&+ C\int_0^t \|\Delta Q\|_{L^p}\|\nabla G\|_{L^2}^{\frac{5p-6}{4p}}\|\nabla G\|_{L^6}^{\frac{6-p}{4p}}\| \omega\|_{L^2}^{\frac{3p-6}{4p}} \| \omega\|_{L^6}^{\frac{p+6}{4p}} ds\nonumber\\
			\leq & \varepsilon\int_0^t\|\nabla\omega\|_{L^2}^2 ds + C(\varepsilon)\int_0^t\| Q\|_{H^2}^2\|G\|_{L^2}^2 ds + C\int_0^t \|\nabla u\|_{L^p}^{\frac{1}{2}} \| u\|_{L^2}^{\frac{1}{2}} \|\nabla G\|_{L^2}^{\frac{3p-6}{4p}} \|\Delta G\|_{L^2}^{\frac{5p+6}{4p}}ds\nonumber\\
			&+ C\int_0^t \|\Delta Q\|_{L^p}\|\nabla G\|_{L^2}^{\frac{3p-6}{4p}}\| \omega\|_{L^2}^{\frac{3p-6}{4p}}\|\nabla G\|_{H^1}^{\frac{6+p}{4p}} \|\nabla \omega\|_{L^2}^{\frac{p+6}{4p}} ds\nonumber\\
			\leq & \varepsilon\int_0^t\|\nabla\omega\|_{L^2}^2+\|\Delta G\|_{L^2}^2 ds + C(\varepsilon)\int_0^t\| Q\|_{H^2}^2\|G\|_{L^2}^2 +\|\nabla u\|_{L^p}^{\frac{4p}{3p-6}} \| u\|_{L^2}^{\frac{4p}{3p-6}} \|\nabla G\|_{L^2}^2 ds \\
			& +\varepsilon\int_0^t\|\nabla\omega\|_{L^2}\|\nabla G\|_{H^1} ds + C(\varepsilon)\int_0^t \|\Delta Q\|_{L^p}^{\frac{4p}{3p-6}}\|\nabla G\|_{L^2} \| \omega\|_{L^2} ds\nonumber\\
			\leq & \varepsilon\int_0^t\|\nabla\omega\|_{L^2}^2+\|\nabla G\|_{H^1}^2 ds + C(\varepsilon)\int_0^t \left(\| Q\|_{H^2}^2 +\|\nabla u\|_{L^p}^{\frac{4p}{3p-6}} \| u\|_{L^2}^{\frac{4p}{3p-6}}\right) \|G\|_{H^1}^2 ds\nonumber\\
			& + C(\varepsilon)\int_0^t \|\Delta Q\|_{L^p}^{\frac{4p}{3p-6}}\left(\|\nabla G\|_{L^2}^2+ \| \omega\|_{L^2}^2 \right) ds\nonumber\\
			\leq & \varepsilon\int_0^t\|\nabla\omega\|_{L^2}^2 + \|\Delta G\|_{L^2}^2 ds \nonumber\\
			&+ C(\epsilon)\int_0^t \underbrace{\left(1+\| Q\|_{H^2}^2  +\| u\|_{L^2}^{\frac{4p}{3p-6}}\|\nabla u\|_{L^p}^{\frac{4p}{3p-6}}   +\|\Delta Q\|_{L^p}^{\frac{4p}{3p-6}}\right)}_{\mathcal{N}_2} \left(\| G\|_{H^1}^2+ \| \omega\|_{L^2}^2 \right) ds.\nonumber
	\end{align}
	Similarly, the remainder terms in the right hand side of \eqref{eq4.2} are estimated as follows:
	\begin{align}\label{eq4.5}
		\left|\mathcal{T}_5+ \mathcal{T}_7\right| =& \left|2\int_0^t\left(\widetilde{\Omega} Q-Q\widetilde{\Omega}: G\right) +  (G\Delta G-\Delta GG,\nabla u) - (G\Delta Q-\Delta QG,\nabla \omega)ds\right|\nonumber\\
		\leq &C\int_0^t \| Q\|_{L^3}\|\nabla \omega\|_{L^2}\|G\|_{L^6}+ \|\nabla u\|_{L^p}\| G\|_{L^{\frac{2p}{p-2}}}\|\Delta G\|_{L^2}+ \|\Delta Q\|_{L^p}\| G\|_{L^{\frac{2p}{p-2}}}\|\nabla \omega\|_{L^2}ds \nonumber\\
		\leq & \varepsilon\int_0^t\|\nabla\omega\|_{L^2}^2 ds + C(\varepsilon)\int_0^t\| Q\|_{H^1}^2\|\nabla G\|_{L^2}^2 ds\nonumber\\
		& +C\int_0^t \left(\|\nabla u\|_{L^p}+\|\Delta Q\|_{L^p}\right)\| G\|_{L^{\frac{4p}{p-2}}}^{\frac{12-2p}{6+p}} \| G\|_{L^3}^{\frac{3p-6}{6+p}} \left(\|\Delta G\|_{L^2}+\|\nabla \omega\|_{L^2}\right)ds\nonumber\\
		\leq & \varepsilon\int_0^t\|\nabla\omega\|_{L^2}^2+ \|\Delta G\|_{L^2}^2 ds + C(\varepsilon)\int_0^t\| Q\|_{H^1}^2\|\nabla G\|_{L^2}^2 ds \nonumber\\
		& + C(\varepsilon)\int_0^t \left(\|\nabla u\|_{L^p}^2+\|\Delta Q\|_{L^p}^2\right)\| G\|_{W^{1,\frac{12p}{7p-6}}}^{\frac{24-4p}{6+p}} \| G\|_{H^1}^{\frac{6p-12}{6+p}} ds\nonumber\\
		\leq & \varepsilon\int_0^t\|\nabla\omega\|_{L^2}^2+ \|\Delta G\|_{L^2}^2 ds + C(\varepsilon)\int_0^t\| Q\|_{H^1}^2\|\nabla G\|_{L^2}^2 ds \nonumber\\
		& + C(\varepsilon)\int_0^t \left(\|\nabla u\|_{L^p}^2+\|\Delta Q\|_{L^p}^2\right)\| G\|_{H^1}^{\frac{(5p-6)(6-p)}{p(6+p)}}\| G\|_{W^{1,6}}^{\frac{(6-p)^2}{p(6+p)}} \| G\|_{H^1}^{\frac{6p-12}{6+p}}ds \\
		\leq & \varepsilon\int_0^t\|\nabla\omega\|_{L^2}^2+ \|\Delta G\|_{L^2}^2 ds + C(\varepsilon)\int_0^t\| Q\|_{H^1}^2\|\nabla G\|_{L^2}^2 ds \nonumber\\
		& + C(\varepsilon)\int_0^t \left(\|\nabla u\|_{L^p}^2+\|\Delta Q\|_{L^p}^2\right)\| G\|_{H^1}^{\frac{3p-6}{p}}\| G\|_{H^2}^{\frac{6-p}{p}} ds\nonumber\\
		\leq & \varepsilon\int_0^t\|\nabla\omega\|_{L^2}^2+ \|\Delta G\|_{L^2}^2 +\|G\|_{H^2}^2ds + C(\varepsilon)\int_0^t \left(\| Q\|_{H^1}^2 +\|\nabla u\|_{L^p}^{\frac{4p}{3p-6}}+\|\Delta Q\|_{L^p}^{\frac{4p}{3p-6}}\right)\| G\|_{H^1}^2 ds\nonumber\\
		\leq & \varepsilon\int_0^t\|\nabla\omega\|_{L^2}^2 + \|\Delta G\|_{L^2}^2 ds + C(\varepsilon)\int_0^t \underbrace{\left(1+ \| Q\|_{H^2}^2  + \|\nabla u\|_{L^p}^{\frac{4p}{3p-6}}  +\|\Delta Q\|_{L^p}^{\frac{4p}{3p-6}}\right)}_{\mathcal{N}_3} \| G\|_{H^1}^2 ds;\nonumber
	\end{align}
	\begin{equation}
		\begin{split}
			\left|\mathcal{T}_8\right| 
			\leq &C\int_0^t\int_{\R^3}|G||\nabla u||\Delta G| + |G|^2|\nabla u| + |G||\Delta Q||\nabla \omega| +|Q||G||\nabla\omega| dxds\\
			\lesssim & \int_0^t\underbrace{\left(\|\nabla u\|_{L^p}\|\Delta G\|_{L^2}+ \|\Delta Q\|_{L^p}\|\nabla \omega\|_{L^2}\right)\| G\|_{L^{\frac{2p}{p-2}}}+ \|\nabla \omega\|_{L^2}\|Q\|_{L^3}\|G\|_{L^6}}_{\text{The same as the ones in \eqref{eq4.5} }}+ \|\nabla u\|_{L^2}\|G\|_{L^3}\|G\|_{L^6}   ds\\
			\leq & \varepsilon\int_0^t\|\nabla\omega\|_{L^2}^2 + \|\Delta G\|_{L^2}^2 ds + C(\varepsilon)\int_0^t \underbrace{\left(1+ \|u\|_{H^1}^2 + \|Q\|_{H^2}^2 + \|\nabla u\|_{L^p}^{\frac{4p}{3p-6}}  +\|\Delta Q\|_{L^p}^{\frac{4p}{3p-6}}\right)}_{\mathcal{N}_4} \| G\|_{H^1}^2 ds;
		\end{split}
	\end{equation}
	\begin{align}
		|\mathcal{T}_9|\lesssim &  \int_0^t  \|G\|_{L^\frac{4p}{p-2}}\left( \|R\|_{L^\frac{4p}{p-2}}+  \|Q\|_{L^\frac{4p}{p-2}}\right)\|\nabla \omega\|_{L^2} \|\Delta Q\|_{L^p} + \|G\|_{L^6}\|\nabla\omega\|_{L^2}\left(\|R\|_{L^6}+ \|Q\|_{L^6}\right) \|Q\|_{L^6}ds \nonumber\\
		&+\int_0^t  \|G\|_{L^\frac{4p}{p-2}}\left( \|R\|_{L^\frac{4p}{p-2}}+  \|Q\|_{L^\frac{4p}{p-2}}\right)\|\nabla u\|_{L^p} \|G-L\Delta G\|_{L^2}  ds \nonumber\\
		\leq & \varepsilon\int_0^t\|\nabla\omega\|_{L^2}^2 +  \|\Delta G\|_{L^2}^2 ds + C(\varepsilon)\int_0^t \left(1+ \|Q\|_{H^1}^4 + \|R\|_{H^1}^4 \right) \| G\|_{H^1}^2 ds \nonumber\\
		&+C(\varepsilon) \int_0^t  \|G\|_{W^{1,\frac{12p}{7p-6}}}^2\left( \|R\|_{W^{1,\frac{12p}{7p-6}}}^2+  \|Q\|_{W^{1,\frac{12p}{7p-6}}}^2\right) \left(\|\nabla u\|_{L^p}^2 +\|\Delta Q\|_{L^p}^2\right) ds\nonumber\\
		\leq & \varepsilon\int_0^t\|\nabla\omega\|_{L^2}^2 +  \|\Delta G\|_{L^2}^2 ds + C(\varepsilon)\int_0^t \left(1+ \|Q\|_{H^1}^4 + \|R\|_{H^1}^4 \right) \| G\|_{H^1}^2 ds \nonumber\\
		&+ C(\varepsilon)\int_0^t  \|G\|_{H^1}^{\frac{5p-6}{2p}}\|G\|_{W^{1,6}}^{\frac{6-p}{2p}}\left( \|R\|_{H^1}^{\frac{5p-6}{2p}}\|R\|_{W^{1,6}}^{\frac{6-p}{2p}}+  \|Q\|_{H^1}^{\frac{5p-6}{2p}}\|Q\|_{W^{1,6}}^{\frac{6-p}{2p}}\right) \left(\|\nabla u\|_{L^p}^2 +\|\Delta Q\|_{L^p}^2\right)ds\nonumber\\
		\leq & \varepsilon\int_0^t\|\nabla\omega\|_{L^2}^2 +  \|\Delta G\|_{L^2}^2 ds + C(\varepsilon)\int_0^t \left(1+ \|Q\|_{H^1}^4 + \|R\|_{H^1}^4 \right) \| G\|_{H^1}^2 ds \\
		&+ C(\varepsilon)\int_0^t \left[\left( \|R\|_{H^1}^2(\|R\|_{H^2}^2)^{\frac{6-p}{5p-6}}+  \|Q\|_{H^1}^2(\|Q\|_{H^2}^2)^{\frac{6-p}{5p-6}}\right) \left(\|\nabla u\|_{L^p}^{\frac{8p}{5p-6}} +\|\Delta Q\|_{L^p}^{\frac{8p}{5p-6}}\right)  \right]  \|G\|_{H^1}^2ds\nonumber\\
		\leq & \varepsilon\int_0^t\|\nabla\omega\|_{L^2}^2 +  \|\Delta G\|_{L^2}^2 ds + C(\varepsilon)\int_0^t \underbrace{\left(1+ \|Q\|_{H^1}^4 + \|R\|_{H^1}^4 \right)}_{\mathcal{N}_5} \| G\|_{H^1}^2 ds \nonumber\\
		&+ C(\varepsilon)\int_0^t \underbrace{\left( \|R\|_{H^1}^{\frac{10p-12}{6-p}}\|R\|_{H^2}^2+   \|Q\|_{H^1}^{\frac{10p-12}{6-p}}\|Q\|_{H^2}^2 + \|\nabla u\|_{L^p}^{\frac{4p}{3p-6}} +\|\Delta Q\|_{L^p}^{\frac{4p}{3p-6}}\right) }_{\mathcal{N}_6} \|G\|_{H^1}^2ds;\nonumber
	\end{align}
	\begin{align}
			|\mathcal{T}_4+ \mathcal{T}_{10}|\lesssim & \int_0^t\int_{\R^3} (|R||G| + |Q||G|)( |\nabla\omega|+ |G| +|\Delta G|) dxds\nonumber\\
			&+  \int_0^t\int_{\R^3} (|R|^2|G| + |R||Q||G|+|G||Q|^2)(|\nabla\omega|+ |G| +|\Delta G|) dxds\nonumber\\
			& + \int_0^t\int_{\R^3} (|Q|^3|G| + |R|^2|Q||G|+|R||G||Q|^2+ |R|^3|G|)|\nabla\omega| dxds\nonumber\\
			\leq & \varepsilon\int_0^t\|\nabla \omega\|_{L^2}^2 + \|G\|_{L^2}^2 +\|\Delta G \|_{L^2}^2 ds +C(\varepsilon)\int_0^t (\|R\|_{L^4}^2 + \|Q\|_{L^4}^2) \|G\|_{L^4}^2 ds \nonumber\\
			& +C(\varepsilon)\int_0^t (\|R\|_{L^6}^4 + \|R\|_{L^6}^2\|Q\|_{L^6}^2 + \|Q\|_{L^6}^4) \|G\|_{L^6}^2 ds\nonumber\\
			& +C(\varepsilon)\int_0^t (\|R\|_{L^9}^6 + \|R\|_{L^9}^4\|Q\|_{L^9}^2 + \|R\|_{L^9}^2\|Q\|_{L^9}^4 + \|Q\|_{L^9}^6) \|G\|_{L^6}^2 ds\nonumber\\
			\leq & \varepsilon\int_0^t\|\nabla \omega\|_{L^2}^2 +\|\Delta G \|_{L^2}^2 ds + C(\varepsilon)\int_0^t \left(1+ \|Q\|_{H^1}^2 + \|R\|_{H^1}^2\right) \|G\|_{H^1}^2 ds \\
			& +C(\varepsilon)\int_0^t (\|\nabla R\|_{L^2}^4 +  \|\nabla Q\|_{L^2}^4 + \|Q\|_{L^9}^6 + \|R\|_{L^9}^6) \|\nabla G\|_{L^2}^2 ds\nonumber\\
			\leq & \varepsilon\int_0^t\|\nabla \omega\|_{L^2}^2 +\|\Delta G \|_{L^2}^2 ds + C(\varepsilon)\int_0^t \left(1+ \|Q\|_{H^1}^2 + \|R\|_{H^1}^2\right) \|G\|_{H^1}^2 ds \nonumber\\
			& +C(\varepsilon)\int_0^t \left( \|Q\|_{H^1}^2 + \|R\|_{H^1}^2\right)(\|Q\|_{H^2}^2 + \|R\|_{H^2}^2) \| G\|_{H^1}^2 ds\nonumber\\
			& +C(\varepsilon)\int_0^t (\|D^2 Q\|_{L^2}\| \nabla Q\|_{L^2}^5+ \|D^2 R\|_{L^2}\| \nabla R\|_{L^2}^5) \|\nabla G\|_{L^2}^2 ds\nonumber\\
			\leq & \varepsilon\int_0^t\|\nabla \omega\|_{L^2}^2 + \|\Delta G \|_{L^2}^2 ds +C(\varepsilon)\int_0^t \underbrace{\left(\sum_{l=0}^2\|Q\|_{H^1}^{2l} + \|R\|_{H^1}^{2l}\right)\left(1+ \|Q\|_{H^2}^2 + \|R\|_{H^2}^2\right)}_{\mathcal{N}_7} \| G\|_{H^1}^2 ds;\nonumber
	\end{align}
	\begin{align}
		|\mathcal{T}_{11}+ \mathcal{T}_{12}|\leq & \int_0^t\int_{\R^3}(|R|^3+ |R|^4+ |R|^2 |Q|+ |R|^3|Q|) |\nabla\omega||G| dxds\nonumber\\
		& + \int_0^t\int_{\R^3} (|Q|^3|G| + |Q|^2|R||G| + |R||Q|^3|G|+|R|^2|G||Q|^2+ |Q|^4|G|)|\nabla\omega| dxds\nonumber\\
		\leq & \varepsilon\int_0^t\|\nabla\omega\|_{L^2}^2  ds + C(\varepsilon)\int_0^t (\| R\|_{L^9}^6 +  \|R\|_{L^{12}}^8 + \|R\|_{L^9}^4\|Q\|_{L^9}^2 + \|R\|_{L^{12}}^6\|Q\|_{L^{12}}^2) \| G\|_{L^6}^2 ds\nonumber\\
		&+ C(\varepsilon)\int_0^t (\| Q\|_{L^9}^6  + \|R\|_{L^9}^2\|Q\|_{L^9}^4 +  \|R\|_{L^{12}}^2\|Q\|_{L^{12}}^6+ \|R\|_{L^{12}}^4\|Q\|_{L^{12}}^4 +  \|Q\|_{L^{12}}^8 ) \| G\|_{L^6}^2 ds\nonumber\\
		\leq & \varepsilon\int_0^t\|\nabla\omega\|_{L^2}^2  ds + C(\varepsilon)\int_0^t (\| R\|_{L^9}^6 + \| Q\|_{L^9}^6 +  \|R\|_{L^{12}}^8 + \|Q\|_{L^{12}}^8) \|\nabla G\|_{L^2}^2 ds\\
		\leq & \varepsilon\int_0^t\|\nabla\omega\|_{L^2}^2  ds + C(\varepsilon)\int_0^t (\| D^2 R\|_{L^2} \| \nabla R\|_{L^2}^5 + \| D^2 Q\|_{L^2} \| \nabla Q\|_{L^2}^5  ) \|\nabla G\|_{L^2}^2 ds\nonumber\\
		& + C(\varepsilon)\int_0^t ( \| D^2 R\|_{L^2}^2 \| \nabla R\|_{L^2}^6 + \| D^2 Q\|_{L^2}^2 \| \nabla Q\|_{L^2}^6 ) \|\nabla G\|_{L^2}^2 ds\nonumber\\
		\leq & \varepsilon\int_0^t\|\nabla \omega\|_{L^2}^2  ds +C(\varepsilon)\int_0^t \underbrace{( \|Q\|_{H^1}^4 + \|R\|_{H^1}^4 + \|Q\|_{H^1}^6 + \|R\|_{H^1}^6)(\|Q\|_{H^2}^2 + \|R\|_{H^2}^2)}_{\mathcal{N}_8} \| G\|_{H^1}^2 ds.\nonumber
	\end{align}
	Now, we introduce the following functional for $(G,\omega)$:
	\[
	\mathcal{Q}(t)=\|\omega(t)\|_{L^{2}}^{2}+\| G(t) \|_{L^{2}}^{2}+L \| \nabla G(t) \|_{L^{2}}^{2}.
	\]
	From all the above estimates,
	%together with a sufficiently number $\varepsilon$,
	we can choose a sufficiently small number $\varepsilon$ and conclude that
	\begin{equation}
		\mathcal{Q}(t) \lesssim \int_{0}^{t} \mathcal{A}(s) \mathcal{Q}(s) ds,
	\end{equation}
	where $\mathcal{A}:= \sum_{i=1}^8 \mathcal{N}_i$.
	According to the regularity assumptions of solutions, one can see that $\mathcal{A}$ is integrable on $[0,T]$. Then, it follows from the Gr\"{o}nwall's inequality that $\mathcal{Q}(t) \equiv 0$ for all $t\in [0,T]$, and hence $(Q,u) \equiv (R,v)$. This implies that weak-strong uniqueness holds for the condition \eqref{eq-4.1}.
	
	Finally, we demonstrate the desired result in Theorem~\ref{thm1.2} by interpolation. Indeed, one should note that $(\Delta Q,\nabla u)\in L_t^2(L^2)$. Hence, if $(\Delta Q,\nabla u)\in L_t^q(L^p)$ with
	\begin{equation}\label{eq4.12}
		\frac{2}{q}+\frac{3}{p}<\frac{3}{2}\quad {\rm and}\quad 2\leq p,q \leq +\infty,
	\end{equation}
	we can find some $\theta\in [0,\frac{3}{5}]$ and apply the following inequality
	\[
	\|f\|_{L_t^r(L^s)}\leq \|f\|_{L_t^2(L^2)}^\theta\|f\|_{L_t^q(L^p)}^{1-\theta},\,\,\dfrac{1}{r}=\dfrac{\theta}{2} + \dfrac{1-\theta}{q}, \,\,\dfrac{1}{s}=\dfrac{\theta}{2} + \dfrac{1-\theta}{p},
	\]
	%we can find some $\theta\in [0,\frac{3}{5}]$ such that
	to conclude that $(\Delta Q,\nabla u)\in L_t^r(L^s)$ with $\dfrac{2}{r}+\dfrac{3}{s}=\dfrac{3}{2}$ and $s\in [2,6]$. Then, by applying the previous result for \eqref{eq-4.1}, one can see that weak-strong uniqueness also holds for the condition \eqref{eq4.12}, and this completes the proof of Theorem~\ref{thm1.2}.

\section{Global regularity of strong solution}\label{sec5}
The aim of this section is to investigate the global well-posedness of system \eqref{eq1.1} for small initial data with Sobolev regularity. To do this, we first need to show the existence and uniqueness of local-in-time solution to \eqref{eq1.1} for the case $\xi\in\R$. Then, together with the a priori estimate of the solution, global regularity will follows in a standard continuity argument.

Motivated by the work in \cite{ALC-decay},
where local well-posedness result for the model of active liquid crystals was established for large initial data belonging to $H^{s+1}\times H^s$ with $s\geq 2$,
we similarly construct the following approximation system to show the local existence and uniqueness result (because the structure of system \eqref{eq1.1} is similar with that of incompressible active liquid crystals, even though for the case of arbitrary $\xi\in\R$):
\begin{equation}\label{eq-ap}
	\begin{cases}
		\partial_t Q^{n+1}+ u^n\cdot\nabla Q^{n+1} + Q^n\Omega^{n+1} -\Omega^{n+1}Q^n=\Gamma H^{n+1} + \xi D^{n+1}\left( Q^n+\dfrac{1}{3}\mathbb{I}_3 \right) \\
		\hspace{1.2cm}+ \xi \left( Q^n+\dfrac{1}{3}\mathbb{I}_3 \right)D^{n+1}- 2\xi \left( Q^n+\dfrac{1}{3}\mathbb{I}_3 \right){\rm Tr}(Q^n\nabla u^{n+1}),\\
		\partial_t u^{n+1} + u^n \cdot\nabla u^{n+1}-\mu \Delta u^{n+1} =L\mathbb{P}\nabla\cdot \left(Q^n\Delta Q^{n+1}-\Delta Q^{n+1}Q^n - \nabla Q^n \odot\nabla Q^n\right)\\
		\hspace{1.2cm}-\xi \mathbb{P}\nabla\cdot \left[ \left( Q^n+\dfrac{1}{3}\mathbb{I}_3 \right)H^{n+1}+  H^{n+1}\left( Q^n+\dfrac{1}{3}\mathbb{I}_3 \right) - 2  \left( Q^n+\dfrac{1}{3}\mathbb{I}_3 \right){\rm Tr}(Q^nH^{n+1}) \right],\\
		\nabla\cdot u^{n+1} ={\rm Tr}\,Q^{n+1}= 0,
	\end{cases}
\end{equation}
with the initial data
\begin{equation}\label{eq:2.2}
	(Q^{n+1},u^{n+1})(t,x)\mid_{t=0}=(Q_0,u_0)(x).
\end{equation}
Here
\[
\begin{split}
D^{n+1}:=&\dfrac{\nabla u^{n+1} + (\nabla u^{n+1})^{T}}{2},\quad \Omega^{n+1}:=\dfrac{\nabla u^{n+1} - (\nabla u^{n+1})^{T}}{2},\\
H^{n+1}:= &L\Delta Q^{n+1}-aQ^{n+1}+M^{n+1},\quad
M^{n+1}:= b\left[(Q^n)^2-\dfrac{{\rm Tr}((Q^n)^2)}{3}\mathbb{I}_3\right]-cQ^{n+1}{\rm Tr}((Q^n)^2).
\end{split}
\]
As a result, we have the following local well-posedness result for system \eqref{eq1.1} with any large initial data in the Sobolev framework (see Appendix~\ref{ap1} for details).
\begin{theorem}[Local existence]\label{thm5.1}
	Let $\xi,a\in\R$, $s\geq 2$ be an integer and $E_0:=\|Q_0\|_{H^{s+1}}^2 + \|u_0\|_{H^s}^2$ be the initial energy. 
	Then there exists a time $T$ with
	\begin{equation}
		0<T\leq T^\ast:=\dfrac{\min\{E_0, \ln 2\}}{\sum_{l=0}^4 4^{l}\widetilde{C}E_0^{l}}
	\end{equation}
	such that the Cauchy problem of system \eqref{eq1.1} with initial data $(Q_0,u_0)\in H^{s+1}\times H^s$ admits a unique local solution
	\begin{equation}
		Q\in L^\infty\left( 0,T;H^{s+1} \right)\cap L^2\left( 0,T;H^{s+2} \right)\quad\text{and}\quad u\in L^\infty\left( 0,T;H^s \right)\cap L^2\left( 0,T;H^{s+1} \right),
	\end{equation}
	where $\widetilde{C}$ is a positive constant defined as in Appendix~\ref{ap1}.
	Moreover, $(Q,u)$ satisfies the following energy inequality
	\[
		\|u(t)\|_{H^s}^2 + \|Q(t)\|_{H^{s+1}}^2 
		+\int_0^t  \|\nabla u(\tau)\|_{H^{s}}^2 + \|Q(\tau)\|_{H^{s+1}}^2 +  \|\Delta Q(\tau)\|_{H^{s}}^2 d\tau \leq C_0,
	\]
	for all $t\in [0,T]$, where $C_0$ is a positive constant depending only on $E_0$, $s$, $|a|$, $b$, $c$, $\mu$, $|\xi|$, $L$ and $\Gamma$.
\end{theorem}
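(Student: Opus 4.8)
The plan is to run the Picard iteration built on the linear scheme \eqref{eq-ap}--\eqref{eq:2.2}, started from $(Q^{0},u^{0}):=(Q_0,u_0)$: first solve each linear step, then prove a uniform-in-$n$ bound on a short interval $[0,T]$ with $T$ depending only on $E_0$, then show the sequence is Cauchy in a lower-order norm and pass to the limit, and finally read off uniqueness from a difference estimate. For fixed $(Q^n,u^n)$ in the asserted class, the system \eqref{eq-ap} is \emph{linear} in $(Q^{n+1},u^{n+1})$: its principal part is the decoupled pair of parabolic operators $\partial_t-\mu\Delta$ (on $u^{n+1}$) and $\partial_t-\Gamma L\Delta$ (on $Q^{n+1}$), and all the remaining terms carry the $n$-th iterate as a coefficient. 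The terms coupling $\nabla u^{n+1}$ into the $Q^{n+1}$-equation and $L\Delta Q^{n+1}$ (under $\mathbb{P}\nabla\cdot$) into the $u^{n+1}$-equation are arranged precisely so that in the natural energy identity they cancel, which is the linearized analogue of the identities behind Lemma~\ref{lem.cancel}; hence a Galerkin/semigroup argument produces, for each $n$, a unique $(Q^{n+1},u^{n+1})\in L^\infty_tH^{s+1}\cap L^2_tH^{s+2}\times L^\infty_tH^s\cap L^2_tH^{s+1}$ on any interval where $(Q^n,u^n)$ lives in this class.

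\textbf{Step 2 (uniform bound).} This is the heart of the proof, and it is exactly Proposition~\ref{prop-a1}. For all $|\alpha|\le s$, apply $\partial^\alpha$ and test the $u^{n+1}$-equation against $\partial^\alpha u^{n+1}$ and the $Q^{n+1}$-equation against $\partial^\alpha(Q^{n+1}-L\Delta Q^{n+1})$, then sum. With $\mathcal{E}^{n+1}:=\|u^{n+1}\|_{H^s}^2+\|Q^{n+1}\|_{H^{s+1}}^2$ and $\mathcal{D}^{n+1}:=\mu\|\nabla u^{n+1}\|_{H^s}^2+\Gamma L^2\|\Delta Q^{n+1}\|_{H^s}^2+\cdots$, the transport terms are handled through the commutators $[\partial^\alpha,u^n\cdot\nabla]$; the top-order cross terms $\nabla\cdot(Q^n\Delta Q^{n+1}-\Delta Q^{n+1}Q^n)$, $\Omega^{n+1}Q^n-Q^n\Omega^{n+1}$ and their $\xi$-counterparts $\nabla\cdot\bigl[(Q^n+\tfrac{\mathbb{I}_3}{3})H^{n+1}+\cdots\bigr]$, $\xi D^{n+1}(Q^n+\tfrac{\mathbb{I}_3}{3})+\cdots$ cancel at leading order by the energy structure, leaving only commutator remainders; and the polynomial terms $b[(Q^n)^2-\cdots]$, $cQ^{n+1}\mathrm{Tr}((Q^n)^2)$, together with the quartic pieces $Q^n\mathrm{Tr}(Q^n\nabla u^{n+1})$ and $Q^n\mathrm{Tr}(Q^n\Delta Q^{n+1})$ generated by $\xi\ne0$, are controlled by Sobolev embeddings ($H^s\hookrightarrow L^\infty$ for $s\ge2$) and the refined commutator/product estimates of \cite{ALC-decay} — which is exactly what keeps the admissible integer index at $s\ge2$ (where earlier work required $s>\tfrac{d}{2}+1$). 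One is thus led to a differential inequality of the form
\[
\frac{d}{dt}\mathcal{E}^{n+1}(t)+\mathcal{D}^{n+1}(t)\le C\Bigl(1+\sum_{l=1}^{4}\bigl(\mathcal{E}^{n}(t)\bigr)^{l}\Bigr)\bigl(\mathcal{E}^{n+1}(t)+\mathcal{D}^{n+1}(t)\bigr).
\]
Integrating in time and using that $\int_0^T C\bigl(1+\sum_{l=1}^{4}(4E_0)^{l}\bigr)\,ds$ is small when $0<T\le T^\ast=\min\{E_0,\ln 2\}/\bigl(\sum_{l=0}^{4}4^{l}\widetilde{C}E_0^{l}\bigr)$, an induction shows: if $\sup_{[0,T]}\mathcal{E}^{n}\le 4E_0$ then $\sup_{[0,T]}\mathcal{E}^{n+1}\le 4E_0$ and $\int_0^T\mathcal{D}^{n+1}\le C_0$, so the whole sequence obeys this bound with $C_0=C_0(E_0,s,|a|,b,c,\mu,|\xi|,L,\Gamma)$.

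\textbf{Steps 3--4 (convergence and uniqueness).} The differences $\delta Q^{n+1}:=Q^{n+1}-Q^{n}$, $\delta u^{n+1}:=u^{n+1}-u^{n}$ solve a linear system whose sources are products of $(\delta Q^{n},\delta u^{n})$ against the (now uniformly bounded) iterates. Performing the energy estimate at a lower level — in $L^\infty_tL^2\cap L^2_tH^1$ for $\delta u$ and in $L^\infty_tH^1\cap L^2_tH^2$ for $\delta Q$ — and invoking the Step~2 bounds gives, after possibly shrinking $T$ by a fixed factor, a contraction
\[
\sup_{[0,T]}\Bigl(\|\delta u^{n+1}\|_{L^2}^2+\|\delta Q^{n+1}\|_{H^1}^2\Bigr)+\int_0^T\!\!\bigl(\cdots\bigr)ds\le\tfrac12\Bigl[\sup_{[0,T]}\Bigl(\|\delta u^{n}\|_{L^2}^2+\|\delta Q^{n}\|_{H^1}^2\Bigr)+\int_0^T\!\!\bigl(\cdots\bigr)ds\Bigr].
\]
Hence $(Q^n,u^n)$ converges strongly in this low norm; interpolating with the uniform high-norm bound of Step~2 upgrades the convergence enough to pass to the limit in \eqref{eq-ap} and obtain a solution $(Q,u)$ of \eqref{eq1.1} in the stated class, and the asserted energy inequality then follows from weak-$*$ lower semicontinuity of the norms applied to the Step~2 bound. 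Uniqueness is the same computation carried out directly on the difference of two solutions with the same data, closed by Gr\"onwall's inequality.

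\textbf{Main obstacle.} The only nontrivial point is Step~2: closing the $H^s$-estimate uniformly in $n$ in the presence of the cubic and quartic terms generated by $\xi\ne0$ and of the third-order cross terms $\nabla\cdot(Q^n\Delta Q^{n+1})$ and their $\xi$-analogues, \emph{without losing derivatives}. This forces one to exploit the antisymmetric/energetic cancellations of the system together with the refined commutator estimates of \cite{ALC-decay}, and it is also what pins the admissible Sobolev threshold at $s\ge2$; the full argument is carried out in the proof of Proposition~\ref{prop-a1} in Appendix~\ref{ap1}.
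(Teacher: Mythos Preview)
Your plan is essentially the paper's: iterate via \eqref{eq-ap}, establish Proposition~\ref{prop-a1}, run the induction $\sup_{[0,T]}\mathbb{E}_n\le 4E_0$ on the explicit interval, and pass to the limit (the paper invokes ``standard compactness'' where you do a Cauchy/contraction argument in a low norm---either works).

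Two small points deserve correction. First, the differential inequality you display does not close: with $\mathcal{D}^{n+1}$ on the right multiplied by $C\bigl(1+\sum(\mathcal{E}^n)^l\bigr)$, integrating gives a term $K\!\int_0^T\mathcal{D}^{n+1}$ with $K$ of order one, which cannot be absorbed. The paper instead applies Young's inequality to the $\mathbb{D}_{n+1}^{1/2}$-factors in \eqref{en-loc}, so the dissipation is absorbed into the left and one is left with
\[
\frac{d}{dt}\mathbb{E}_{n+1}+\mathbb{D}_{n+1}\le \widetilde{C}\sum_{l=0}^4\mathbb{E}_n^{\,l}\,\mathbb{E}_{n+1}+\widetilde{C}\sum_{l=2}^4\mathbb{E}_n^{\,l},
\]
after which Gr\"onwall and the choice of $T^\ast$ close the induction. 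Second, you omit the paper's Step~3: one must check that $Q$ remains trace-free, by taking the trace of the $Q$-equation and applying Gr\"onwall to $\|\mathrm{Tr}\,Q\|_{L^2}$, so that the solution actually lands in $S_0^3$.
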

\begin{remark}
	%In order to construct the local solutions for any large initial data in $H^{s+1}\times H^s$ with $s\geq 2$, we utilize a refined commutator estimates
	%this finding, which is based on a proper approximation of the velocity (and does not
	%require any estimate or additional assumption on the pressure), to explore
	%In Theorem~\ref{thm5.1},
	In contrast to the work \cite{active-limit},
	we utilize a refined commutator estimates to explore the a priori estimate for the approximation solutions in $H^s$-framework, which is essential to reduce the assumption on Sobolev index $s$.
	Moreover, our result holds for all $a\in\R$.
\end{remark}

Based on the above result, the main goal of this section is to find some a priori estimate for the solutions to system \eqref{eq1.1}.
Then, the global well-posedness for small initial data can be justified by the standard continuation argument. In order to do that, one may suggest using the following energy functionals in a similar way to that of \cite{active-limit,ALC-decay}:
\[
\begin{split}
&\widetilde{\mathcal{E}}(t):= \|u(t)\|_{H^s}^2 + M\|Q(t)\|_{H^{s}}^2 +L\|\nabla Q(t)\|_{H^{s}}^2,\\
&\widetilde{\mathcal{D}}(t):= \mu\|\nabla u(t)\|_{H^{s}}^2 + aM\Gamma\|  Q(t)\|_{H^{s}}^2 + (a+M)\Gamma L\|\nabla Q(t)\|_{H^{s}}^2 + \Gamma L^2\|\Delta
Q(t)\|_{H^{s}}^2.
\end{split}
\]
%For an intuitive insight, 
Indeed, in \cite{ALC-decay} we have to deal with a quadratic term of the following type:
\begin{equation}\label{eq5.5}
	\left| \kappa\left( \partial^k Q:\partial^k\nabla u\right) \right| \leq \dfrac{\mu}{2}\|\nabla u\|_{H^s}^2 + \dfrac{2\kappa^2}{\mu} \|Q\|_{H^s}^2.
\end{equation}
Clearly, one can choose a sufficient large number $M$ such that these two terms on the right hand side of the above inequality can be absorbed by the dissipative energy functional $\widetilde{\mathcal{D}}(t)$. However, when $\xi\ne 0$, both of the equations of $(Q,u)$ contain linear terms.
This makes the analysis quite delicate.
Similarly, we will have the following quadratic term alike \eqref{eq5.5}:
\[
\dfrac{2\xi(M-a)}{3}\left( \partial^k Q:\partial^k\nabla u\right).
\]
Since the coefficient of this term also depends on $M$, the above strategy does not apply. Instead, we will choose $M=a$ such this term completely cancel out, which  differs from our previous study. Then, one can refine the argument from \cite{ALC-decay} and carry out a delicate analysis for the global regularity.
Nevertheless, we use the following energy functionals for the case of $\xi\ne0$:
%we will present the a priori estimate for system \eqref{eq1.1} via the following energy functionals:
\[
\begin{split}
	&\mathcal{E}(t)= \|u(t)\|_{H^s}^2 + a\|Q(t)\|_{H^{s}}^2 +L\|\nabla Q(t)\|_{H^{s}}^2,\\
	&\mathcal{D}(t)= \mu\|\nabla u(t)\|_{H^{s}}^2 + a^2\Gamma\|  Q(t)\|_{H^{s}}^2 + 2a\Gamma L\|\nabla Q(t)\|_{H^{s}}^2 + \Gamma L^2\|\Delta
	Q(t)\|_{H^{s}}^2.
\end{split}
\]

Now, we turn to present the a priori estimate for system \eqref{eq1.1} below.
\begin{prop}\label{prop3.1}
	Let $\xi\in\R$, $a>0$ and $s\geq 2$ be an integer. Then the local solution $(Q,u)$ established in Theorem~\ref{thm5.1} satisfies
	\begin{equation}\label{es-global}
		\dfrac{d}{dt}\mathcal{E}(t)+ \mathcal{D}(t)
		\leq C \sum_{m=1}^4\mathcal{E}^{\frac{m}{2}}(t) \mathcal{D}(t),\quad \forall t\in [0,T],
	\end{equation}
	where $C$ is some positive constant depending only on $s$, $|a|$, $b$, $c$, $\mu$, $|\xi|$, $L$ and $\Gamma$.
\end{prop}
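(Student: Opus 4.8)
The strategy is to apply $\partial^k$ for $|k|\le s$ to both evolution equations in \eqref{eq1.1}, pair the $Q$-equation against $\partial^k(a Q - L\Delta Q)$ in $L^2$ and the $u$-equation against $\partial^k u$, and sum over $k$. The linear terms are arranged to cancel: with the choice $M=a$ in the energy functional $\mathcal E$, the cross term $\tfrac{2\xi(M-a)}{3}(\partial^k Q:\partial^k\nabla u)$ vanishes identically, which is the whole point of picking $\mathcal E$, $\mathcal D$ as stated rather than the generic $\widetilde{\mathcal E}$, $\widetilde{\mathcal D}$. After this cancellation, the left-hand side of the identity produces exactly $\tfrac{d}{dt}\mathcal E(t)+\mathcal D(t)$ up to the dissipation coming from $\mu\Delta u$, $L\Delta Q$ and the $-aQ$ damping; the remaining terms are all genuinely nonlinear and must be bounded by $C\sum_{m=1}^4\mathcal E^{m/2}(t)\mathcal D(t)$.

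The core of the argument is then a term-by-term estimate of the nonlinearities. I would organize them by homogeneity: the transport terms $u\cdot\nabla Q$, $u\cdot\nabla u$ and the rotation/strain terms $\Omega Q-Q\Omega$, $\xi(DQ+QD)$ give quadratic contributions; the terms $Q\Delta Q-\Delta Q Q$, $\nabla Q\odot\nabla Q$, $\xi(QH+HQ)$ with $H$'s leading part $L\Delta Q$, and the $b$-term in the free energy give cubic contributions; $\xi Q\,\mathrm{Tr}(Q\nabla u)$, $\xi Q\,\mathrm{Tr}(Q\Delta Q)$ and the $c$-term give quartic contributions; and the top-order $\xi Q\,\mathrm{Tr}(Q\Delta Q)$ combined with the $Q^2$ prefactor gives the quintic contribution, hence $m$ running from $1$ to $4$ after one factor of $\mathcal D$ is extracted. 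The standard tools are: commutator estimates $\|[\partial^k,f]g\|_{L^2}\lesssim \|\nabla f\|_{L^\infty}\|\partial^{k-1}g\|_{L^2}+\|\partial^k f\|_{L^2}\|g\|_{L^\infty}$ (using the refined version from \cite{ALC-decay} to keep $s\ge 2$ sharp), the Moser/Kato–Ponce product estimate in $H^s$, the Sobolev embeddings $H^s\hookrightarrow L^\infty$ (valid since $s\ge 2>3/2$) and $H^1\hookrightarrow L^6$, and Gagliardo–Nirenberg interpolation to trade one power of $H^{s+1}$ (i.e. a factor of $\sqrt{\mathcal D}$) against the lower norms. Crucially, because $a>0$, both $\|Q\|_{H^s}$ and $\|\nabla Q\|_{H^s}$ — not merely $\|\Delta Q\|_{H^s}$ — appear in $\mathcal D(t)$; this is what lets the damping absorb the zeroth- and first-order pieces of $H$ and of the $b$-, $c$-terms, and is the reason the hypothesis $a>0$ cannot be dropped.

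For each nonlinear term the scheme is identical: move derivatives via Leibniz and commutators so that the highest-order factor is always hit by $\sqrt{\mathcal D}$, estimate the remaining factors in $L^\infty$ or $L^6$ by $\mathcal E$, and use Young's inequality to absorb the two $\sqrt{\mathcal D}$ factors into $\varepsilon\,\mathcal D$ with $\varepsilon$ small, leaving a clean $\mathcal E^{m/2}\mathcal D$. For instance $(\partial^k(u\cdot\nabla u),\partial^k u)$ reduces after integration by parts and the divergence-free condition to commutator terms bounded by $\|\nabla u\|_{L^\infty}\|\partial^k u\|_{L^2}^2\lesssim \sqrt{\mathcal E}\,\mathcal D$; the top-order piece of $\xi(\partial^k(Q\,\mathrm{Tr}(Q\Delta Q)),\partial^k\Delta Q)$ is controlled by $|\xi|\,\|Q\|_{L^\infty}^2\|\Delta Q\|_{H^s}^2$ plus lower commutators, i.e. by $\mathcal E\,\mathcal D$ and $\mathcal E^{3/2}\mathcal D$; the quintic piece arises only from the interplay of this term's $Q^2$ weight with a further $\mathrm{Tr}(Q\Delta Q)$, giving $\|Q\|_{L^\infty}^4\|\Delta Q\|_{H^s}^2\lesssim \mathcal E^2\mathcal D$. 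Summing over the finitely many $k$ with $|k|\le s$ and all the nonlinear terms, then choosing $\varepsilon$ small enough to absorb all the $\varepsilon\,\mathcal D$ contributions into the $\mathcal D(t)$ on the left, yields \eqref{es-global}.

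The main obstacle is the top-order bookkeeping for the $\xi$-dependent quartic and quintic terms $\xi Q\,\mathrm{Tr}(Q\nabla u)$ and $\xi Q\,\mathrm{Tr}(Q\Delta Q)$: naively distributing $\partial^k$ over the three (or four) factors produces a term with $s$ derivatives on $\nabla u$ or $\Delta Q$ and no spare derivative to spend, so one must verify that the \emph{worst} distribution — all derivatives on the single highest-order factor, the rest kept in $L^\infty$ via $H^s\hookrightarrow L^\infty$ — is exactly the one that closes, and that every other distribution is strictly subcritical and handled by the commutator estimate. Keeping careful track of which of the four powers $\mathcal E^{m/2}$, $m=1,\dots,4$, each term contributes — and confirming that no term needs $\mathcal E^{5/2}$ or worse once a full $\mathcal D$ has been extracted — is the delicate part; everything else is routine Moser-type estimation. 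This is carried out in detail below.
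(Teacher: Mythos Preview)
Your proposal is correct and follows essentially the same route as the paper: apply $\partial^k$ for $|k|\le s$, pair the $Q$-equation with $a\partial^k Q - L\partial^k\Delta Q$ and the $u$-equation with $\partial^k u$, use the choice $M=a$ to kill the linear cross term $\tfrac{2\xi}{3}(\partial^k Q:\partial^k\nabla u)$, and then bound each nonlinear term via commutator/Moser estimates and the embedding $H^s\hookrightarrow L^\infty$; you also correctly identify that $a>0$ is what puts $\|Q\|_{H^s}$ into $\mathcal D$ and thereby lets the quintic terms close at $\mathcal E^2\mathcal D$.

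Two small corrections relative to the paper. First, the Young's-inequality/$\varepsilon$-absorption step you describe is not needed: once you place two of the three (or more) factors in $\mathcal D^{1/2}$ and the rest in powers of $\mathcal E^{1/2}$, every nonlinear term already lands directly as $C\,\mathcal E^{m/2}\mathcal D$; there are no residual $\varepsilon\mathcal D$ terms to absorb, and the paper's proof does not invoke Young at this stage. Second, your illustrative example $\xi(\partial^k(Q\,\mathrm{Tr}(Q\Delta Q)),\partial^k\Delta Q)$ is mis-paired: $Q\,\mathrm{Tr}(Q\Delta Q)$ sits inside $\tau$ in the $u$-equation, so it pairs with $\partial^k\nabla u$; conversely $Q\,\mathrm{Tr}(Q\nabla u)$ sits in the $Q$-equation and pairs with $a\partial^k Q - L\partial^k\Delta Q$. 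The paper exploits that the two resulting top-order pieces cancel exactly (via Lemma~\ref{lem.cancel} and the cyclic trace identity), then estimates only the commutator remainders $N_{16}$--$N_{19}$. Your plan of bounding each top-order piece directly by $\|Q\|_{L^\infty}^2\|\partial^k\Delta Q\|_{L^2}\|\partial^k\nabla u\|_{L^2}\lesssim \mathcal E\,\mathcal D$ also works, so this is a bookkeeping difference rather than a gap.
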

\begin{proof}
Similarly to Proposition~\ref{prop-a1}, the proof consists of two steps.
\begin{enumerate}[(1).]
	\item The first step is to derive the basic $L^2$-energy estimate.
	%for system \eqref{eq1.1}.
	Thus we take the summation of \eqref{eq1.1}$_1$ multiplied by $aQ-L\Delta Q$ and \eqref{eq1.1}$_2$ multiplied by $u$, take the trace, and then integrate by parts over $\R^3$ to get
	\begin{align}\label{eq5.7}
		\dfrac{1}{2}\dfrac{d}{dt}\Big( \|u\|_{L^2}^2 + &a\|Q\|_{L^2}^2 + L\|\nabla Q\|_{L^2}^2 \Big)
		+\mu\|\nabla u\|_{L^2}^2\nonumber\\
		&  + a^2\Gamma\| Q(t)\|_{L^2}^2 + 2a\Gamma L\|\nabla Q(t)\|_{L^2}^2 + \Gamma L^2\|\Delta
		Q(t)\|_{L^2}^2\nonumber\\
		=& \Gamma  \left(
		b\left[Q^2-\dfrac{{\rm Tr}(Q^2)}{3}\mathbb{I}_3\right]-cQ{\rm Tr}(Q^2) :aQ -L\Delta Q  \right)\nonumber\\
		&+ \dfrac{2\xi}{3}  \left(  b\left[Q^2-\dfrac{{\rm Tr}(Q^2)}{3}\mathbb{I}_3\right]-cQ{\rm Tr}(Q^2) :\nabla u  \right)\nonumber\\
		&- 2\xi  \left(  Q{\rm Tr}(Q\nabla u):  b\left[Q^2-\dfrac{{\rm Tr}(Q^2)}{3}\mathbb{I}_3\right]-cQ{\rm Tr}(Q^2)  \right)\\
		&+ 2\xi\left( Q \left\lbrace b\left[Q^2-\dfrac{{\rm Tr}(Q^2)}{3}\mathbb{I}_3\right]-cQ{\rm Tr}(Q^2) \right\rbrace:\nabla u  \right)\nonumber\\
		\lesssim& \left(\|Q\|_{L^2}+\|\Delta Q\|_{L^2} + \|\nabla u\|_{L^2}\right)\sum_{m=1}^4 \|Q\|_{L^\infty}^m\|Q\|_{L^2}\nonumber\\
		\lesssim& \sum_{m=1}^4 \mathcal{E}^{\frac{m}{2}} (t) \mathcal{D}(t).\nonumber
	\end{align}
	Note that we have used Lemma~\ref{lem.cancel} and the cyclic property of the trace to cancel some nonlinear terms.
	\item  For the higher-order energy estimates, let us apply $\partial^k$ ($1\leq |k|\leq s$) to system \eqref{eq1.1}, and sum up the second equation of resulting system multiplied by $\partial^ku $ and the first equation of resulting system multiplied by $a\partial^kQ -L\partial^k\Delta Q $, take the trace, and integrate by parts over $\R^3$ to have
	\begin{align*}
		&\dfrac{1}{2}\dfrac{d}{dt}\left( \|\partial^ku\|_{L^2}^2 +  a\|\partial^kQ\|_{L^2}^2 +  L\|\partial^k\nabla Q\|_{L^2}^2 \right)
		+\mu\|\partial^{k}\nabla u\|_{L^2}^2 \\
		&+ a^2\Gamma \|\partial^{k} Q\|_{L^2}^2+ 2aL\Gamma\|\partial^{k}\nabla Q\|_{L^2}^2 + \Gamma L^2\|\partial^k\Delta
		Q\|_{L^2}^2\\
		=& \left( [\partial^k, u]_{\rm div}Q :L\partial^k\Delta
		Q-a\partial^kQ  \right)  +L\left(u\cdot\nabla \partial^kQ:\partial^k\Delta
		Q  \right) -\left([\partial^k, u]_{\rm div}u:\partial^k u \right)\\
		& +a\left( \partial^k\Omega Q -  Q\partial^k\Omega:\partial^k Q\right)  +\left( [\partial^k,( \Omega, Q)]_{-}:a\partial^k Q-L\partial^k\Delta
		Q \right)   + L \left(  \partial^k\left( \nabla Q \odot\nabla Q\right) :\nabla\partial^{k} u  \right)\\
		& + a\xi\left( \partial^k DQ+Q\partial^k D:\partial^k Q  \right)  + L\left( [\partial^k,( \Delta Q, Q)]_{-}:\partial^{k}\nabla u   \right)  +\Gamma  \left(  \partial^k\mathcal{M}[Q]:a\partial^kQ -L\partial^k\Delta Q  \right)\\
		& + \dfrac{2\xi}{3}\left( \partial^k  \mathcal{M}[Q]:\partial^k\nabla u \right) -2 a\xi\left(  Q  {\rm Tr}(Q\partial^k\nabla u):\partial^k Q \right)-2 \xi\left( \partial^k  \left\lbrace Q{\rm Tr}\left(Q\mathcal{M}[Q]- aQ^2\right)\right\rbrace: \partial^k\nabla u \right)\\
		&+ \xi\left( \partial^k  \left\lbrace Q\mathcal{M}[Q] + \mathcal{M}[Q]Q-2aQ^2 \right\rbrace:\partial^k\nabla u \right) + \xi\left( [\partial^k,( D, Q)]_{+}:a\partial^k Q -L\Delta \partial^k Q \right)\\
		&+ \xi L\left( [\partial^k,( \Delta Q, Q)]_{+}:\partial^k\nabla u \right)-2 \xi L\left(  Q\partial^k\left\lbrace {\rm Tr}(Q\Delta Q)\right\rbrace -Q  {\rm Tr}(Q\partial^k\Delta Q):\partial^k\nabla u \right)\\
		&-2 \xi\left( \partial^k  \left\lbrace Q{\rm Tr}(Q\nabla u)\right\rbrace-    Q\partial^k\left\lbrace {\rm Tr}(Q\nabla u)\right\rbrace:a\partial^k Q -L\Delta \partial^k Q \right)\\
		&-2 \xi\left(  Q\partial^k\left\lbrace {\rm Tr}(Q\nabla u)\right\rbrace -Q  {\rm Tr}(Q\partial^k\nabla u):a\partial^k Q -L\Delta \partial^k Q \right)\\
		&-2 \xi L\left( \partial^k  \left\lbrace Q{\rm Tr}(Q\Delta Q)\right\rbrace-    Q\partial^k\left\lbrace {\rm Tr}(Q\Delta Q)\right\rbrace:\partial^k\nabla u \right)\\
		\overset{\bigtriangleup}{=}& \sum_{j=1}^{19} N_j.
	\end{align*}
	The same as \eqref{ap_a6}, some higher-order nonlinear terms have been canceled by Lemma~\ref{lem.cancel}. Then, by using H\"{o}lder inequality, Sobolev embedding inequality and the Moser-type calculus inequalities, one can estimate $N_j$ $(1\leq j\leq 19)$ as follows:
	\begin{align*}
		|N_1|\lesssim & \left( \|\nabla u\|_{L^\infty} \|\partial^{k-1} \nabla  Q\|_{L^2}+ \|\nabla  Q\|_{L^\infty}\|\partial^{k} u\|_{L^2}\right)\| \partial^kQ \|_{H^2}\\
		\lesssim & \left( \|\nabla u\|_{H^s} \| \nabla  Q\|_{H^{s-1}}+ \|\nabla  Q\|_{H^s}\|  u\|_{H^s}\right)\| \partial^kQ \|_{H^2}\\
		\leq & C\mathcal{E}^{\frac{1}{2}}(t)\mathcal{D}(t)\\
		|N_2|+ |N_4| + |N_7|\lesssim &  \| u\|_{L^\infty}\|\partial^k\nabla Q\|_{L^2}\|\partial^k\Delta
		Q\|_{L^2} + \| Q\|_{L^\infty}\|\partial^{k}\nabla u\|_{L^2}\|\partial^k Q  \|_{L^2}\\
		\leq & C\mathcal{E}^{\frac{1}{2}}(t)\mathcal{D}(t)\\
		|N_3|\lesssim & \left( \|\nabla u \|_{L^\infty}\|\partial^{k-1}\nabla u \|_{L^2}+\|\nabla u \|_{L^\infty} \|\partial^{k} u \|_{L^2}\right)  \|\partial^{k}  u \|_{L^2}\\
		\lesssim & \mathcal{D}^{\frac{1}{2}}(t) \left( \|\nabla u \|_{H^s}\| \nabla u \|_{H^{s-1}}+\|\nabla u\|_{H^s} \|  u\|_{H^s}\right)  \quad (|k|\geq 1)\\
		\leq & C\mathcal{E}^{\frac{1}{2}}(t)\mathcal{D}(t)\\
		|N_5|+ |N_{14}|\lesssim &  \left( \|\nabla Q\|_{L^\infty}\|\partial^{k} u \|_{L^2}+ \|\nabla u \|_{L^\infty}\|\partial^{k}Q \|_{L^2} \right)\| \partial^kQ \|_{H^2}\\
		\lesssim & \mathcal{D}^{\frac{1}{2}}(t)\left( \|\nabla Q \|_{H^s}\| u \|_{H^s}+ \|\nabla u \|_{H^s}\| Q \|_{H^s} \right)\\
		\leq & C\mathcal{E}^{\frac{1}{2}}(t)\mathcal{D}(t)\\
		|N_6|\lesssim & \| \nabla Q  \|_{L^\infty}  \|\partial^{k}\nabla Q  \|_{L^2} \|\partial^{k}\nabla u   \|_{L^2} \\
		\leq &  C\mathcal{E}^{\frac{1}{2}}(t)\mathcal{D}(t)\\
		|N_{8}| + |N_{15}|\lesssim &   \left(  \|\nabla  Q \|_{L^\infty}  \|\partial^{k-1}\Delta Q \|_{L^2} +  \| \Delta Q \|_{L^\infty}  \|\partial^{k}  Q \|_{L^2}  \right) \|\partial^{k}\nabla u  \|_{L^2}\\
		\lesssim &   \left(  \|\nabla  Q  \|_{H^s}  \|\Delta Q  \|_{H^{s-1}} +  \| \Delta Q \|_{H^s}  \| Q \|_{H^s}  \right) \|\nabla u \|_{H^s}\\
		\leq &  C\mathcal{E}^{\frac{1}{2}}(t)\mathcal{D}(t)\\
		\sum_{i=9}^{13}|N_i| \lesssim &  \|\nabla u\|_{H^s} \sum_{m=1}^4 \|Q\|_{L^\infty}^m\|\partial^kQ\|_{L^2}  +  \| \partial^k Q\|_{H^2}\sum_{m=1}^2 \|Q\|_{L^\infty}^m\|\partial^kQ\|_{L^2}\\
		\leq & C\sum_{m=1}^{4}\mathcal{E}^{\frac{m}{2}}(t)\mathcal{D}(t)\\
		\sum_{i=16}^{19}|N_i| \lesssim  & \|\nabla u\|_{H^s} \Big( \|\nabla Q\|_{L^\infty}\|\Delta Q\|_{L^\infty}\|\partial^{k-1}Q\|_{L^2} + \|\nabla Q\|_{L^\infty}\|Q\|_{L^\infty}\|\partial^{k-1}\Delta Q\|_{L^2} \\
		&+ \|Q\|_{L^\infty}\|\Delta Q\|_{L^\infty}\|\partial^kQ\|_{L^2}\Big) + \| \partial^k Q\|_{H^2}\Big( \|\nabla Q\|_{L^\infty}\|\nabla u\|_{L^\infty}\|\partial^{k-1}Q \|_{L^2} \\
		& + \|\nabla Q\|_{L^\infty}\|Q\|_{L^\infty}\|\partial^{k-1}\nabla u\|_{L^2} + \|Q\|_{L^\infty}\|\nabla u\|_{L^\infty}\|\partial^kQ\|_{L^2}  \Big) \\
		\leq  & C\mathcal{E}(t)\mathcal{D}(t).
	\end{align*}
	Therefore, we get
	\begin{equation}\label{eq5.8}
		\begin{split}
			\dfrac{1}{2}\dfrac{d}{dt}\Big( \|\partial^ku\|_{L^2}^2 +  a\|\partial^kQ\|_{L^2}^2 + & L\|\partial^k\nabla Q\|_{L^2}^2 \Big)
			+\mu\|\partial^{k}\nabla u\|_{L^2}^2 + a^2\Gamma \|\partial^{k} Q\|_{L^2}^2\\
			&+ 2aL\Gamma\|\partial^{k}\nabla Q\|_{L^2}^2 + \Gamma L^2\|\partial^k\Delta
			Q\|_{L^2}^2\\
			\leq& C \sum_{m=1}^{4}\mathcal{E}^{\frac{m}{2}}(t)  \mathcal{D}(t) .
		\end{split}
	\end{equation}
\end{enumerate}
Finally, summing up \eqref{eq5.7} and \eqref{eq5.8} for all $1\leq |k|\leq s$, we obtain \eqref{es-global} and this completes the proof.
\end{proof}

%\subsection{The proof of Theorem~\ref{thm3.1}}
%Now we are in a position to prove the existence and uniqueness of global strong solutions to system \eqref{eq1.1}.
%%for small initial data with
%%\begin{equation}\label{s-data}
%%	E_0\leq \delta_0<1.
%%\end{equation}
%%Let us assume that 
%%\begin{equation}\label{eq-3.7}
%%	E_0\leq \delta_0<1.
%%\end{equation}
%%First, form Theorem~\ref{thm2.1}, there exists some constant $\delta$ such that
%First, let us assume that 
%\[
%\mathcal{E}(t) \leq \delta, \quad\forall t\in [0,T].
%\]
%Then from Proposition~\ref{prop3.1}, for sufficiently small $\delta$, there holds
%\begin{equation}
%	\dfrac{d}{dt}\mathcal{E}(t) \leq 0.
%\end{equation}
%Applying the Gr\"{o}nwall's inequality, we conclude that
%\[
%\sup_{t\in [0,T]}\mathcal{E} (t) \leq  \mathcal{E}(0) <ME_0\leq \delta.
%\]
%%This implies that \eqref{s-data} holds for some large enough number $M$. 
%for the initial energy $E_0\leq \delta_0$ with some small enough number $\delta_0$.
%Based on this a priori estimate \eqref{es-global}, the global existence result can be justified by the standard continuation
%argument [40].
By the standard continuation argument \cite{nishida1978}, together with the a priori estimate \eqref{es-global}, we claim that the Cauchy problem of system \eqref{eq1.1} admits a unique global solution. This completes the proof of global well-posedness result in Theorem~\ref{thm3.1}.

%
%
%
%\textbf{Key point:} quadratic term $\dfrac{2\xi}{3}(D,aQ-L\Delta Q)$ is absorbed by term $\dfrac{2\xi}{3}(H:\nabla u)$.
%

%\section*{Acknowledgements}
%The authors are partially supported by NSFC-W2531006, NSFC-12250710674, NSFC-11831003, NSFC-12031012 and the Institute of Modern Analysis-A Frontier Research Center of Shanghai.
%The first author also gratefully acknowledges the support of the Scientific Research Foundation for Advanced Talent of Nantong University under Grant 135424639079 and the Jiangsu Innovation and Entrepreneurship Talent Program under Grant JSSCBS20250352.

\begin{appendices}
	\section{Local well-posedness}\label{ap1}
	In this appendix, we show the local existence and uniqueness of solutions to system \eqref{eq1.1} in $\R^3$ with arbitrary $\xi\in\R$, see Theorem~\ref{thm5.1}. Although the proof herein is obtained by a similar argument as in \cite{active-limit,ALC-decay}, we will present full details for what concerns some new nonlinearities arising from the non-zero physical coefficient $\xi$. Indeed, by deriving some a priori estimate for approximation system \eqref{eq-ap}, the desired result mainly follows by the standard compactness arguments. Thus, we  start by defining the following energy functionals for the solution $(Q^{n+1},u^{n+1})$ to system \eqref{eq-ap} with initial data $(Q_0,u_0)\in H^{s+1} \times H^{s}$:
	\begin{equation}\label{eq-f1}
	\begin{split}
	\mathbb{E}_n(t):=& \|u^n(t)\|_{H^s}^2 + \|Q^n(t)\|_{H^{s}}^2 + L\|\nabla Q^n(t)\|_{H^{s}}^2,\\
	\mathbb{D}_n(t):=& \mu\|\nabla u^n(t)\|_{H^{s}}^2 +(|a|+L+ |a|L)\Gamma\|Q^n(t)\|_{H^{s+1}}^2 + \Gamma L^2\|\Delta
	Q^n(t)\|_{H^{s}}^2.
	\end{split}
	\end{equation}
	
	Moreover, we state the following commutator estimates (For a proof, see \cite[Lemma~2.1]{ALC-decay}).
	\begin{lemma}\label{new-com}
		Let $\psi\in H^s(\R^3)$ and $\phi,\Phi\in H^{s+1}(\R^3)$ with $s\geq 2$. For all $|k|\leq s$, we have
		\begin{enumerate}[(i).]
			\item $\left\|\partial^k(\psi \nabla \phi) -\psi \partial^k\nabla \phi \right\|_{L^2}\leq C \|\psi\|_{H^s}\|\phi\|_{H^{s+1}}$,
			\item $\left\|\partial^k(\phi \nabla\psi ) -\phi \partial^k\nabla \psi \right\|_{L^2}\leq C \|\psi\|_{H^s}\|\phi\|_{H^{s+1}}$,
			\item $\left\|\partial^k(\phi \Delta \Phi) -\phi \partial^k\Delta \Phi \right\|_{L^2} \leq C \|\phi\|_{H^{s+1}}\|\Phi\|_{H^{s+1}}$.
		\end{enumerate}
	\end{lemma}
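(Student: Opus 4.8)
The three bounds are Moser/Kato--Ponce-type commutator estimates, and the plan is to reduce each of them to the Leibniz rule followed by H\"older's inequality and Gagliardo--Nirenberg interpolation in $\R^3$. For (i), I would write
\[
\partial^k(\psi\nabla\phi)-\psi\,\partial^k\nabla\phi=\sum_{0<\ell\le k}\binom{k}{\ell}\,\partial^\ell\psi\,\partial^{k-\ell}\nabla\phi
\]
and estimate each summand in $L^2$. Since $\partial^\ell\psi$ carries $|\ell|\ge1$ derivatives of $\psi$ and $\partial^{k-\ell}\nabla\phi$ is a derivative of $\phi$ of order $|k-\ell|+1\le s+1$, H\"older with a pair of exponents adding to $\tfrac12$ together with the Gagliardo--Nirenberg multiplicative inequality gives each summand $\lesssim\|\psi\|_{H^s}\|\phi\|_{H^{s+1}}$. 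The only places a bare embedding is invoked are the two endpoints of the sum: at $|\ell|=|k|$ one pairs $\|\partial^k\psi\|_{L^2}\le\|\psi\|_{H^s}$ with $\|\nabla\phi\|_{L^\infty}\lesssim\|\phi\|_{H^{s+1}}$, and at $|\ell|=1$ one pairs $\|\partial\psi\|_{L^6}\lesssim\|\psi\|_{H^s}$ with $\|\partial^{k-1}\nabla\phi\|_{L^3}\lesssim\|\phi\|_{H^s}^{1/2}\|\phi\|_{H^{s+1}}^{1/2}$, the last factor coming from interpolating $L^3$ between $L^2$ and $L^6$. Both endpoints use $s\ge2$ through $H^2(\R^3)\hookrightarrow L^\infty$ and $H^1(\R^3)\hookrightarrow L^6$. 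Estimate (ii) has the same structure with the roles of the two factors exchanged.

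For (iii) the only new feature is that the second factor carries two derivatives: in
\[
\partial^k(\phi\Delta\Phi)-\phi\,\partial^k\Delta\Phi=\sum_{0<\ell\le k}\binom{k}{\ell}\,\partial^\ell\phi\,\partial^{k-\ell}\Delta\Phi
\]
the highest-order term on $\Phi$ is of order $|k-\ell|+2\le s+1$, which is exactly absorbed by $\Phi\in H^{s+1}$, while $\partial^\ell\phi$ costs at most $|k|\le s$ derivatives of $\phi$, absorbed by $\phi\in H^{s+1}$. The one delicate endpoint is $|\ell|=|k|$, where $\Delta\Phi$ sits at order $2$: rather than demanding $\Delta\Phi\in L^\infty$ (which $H^{s+1}$ does not provide when $s=2$), one splits $\|\partial^k\phi\cdot\Delta\Phi\|_{L^2}\le\|\partial^k\phi\|_{L^3}\|\Delta\Phi\|_{L^6}$, with $\|\partial^k\phi\|_{L^3}\lesssim\|\phi\|_{H^s}^{1/2}\|\phi\|_{H^{s+1}}^{1/2}$ and $\|\Delta\Phi\|_{L^6}\lesssim\|\Phi\|_{H^{s+1}}$; at $|\ell|=1$ one uses $\|\nabla\phi\|_{L^\infty}\lesssim\|\phi\|_{H^{s+1}}$ against $\|\partial^{k-1}\Delta\Phi\|_{L^2}\le\|\Phi\|_{H^{s+1}}$. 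Summing over $\ell$ then yields the claim.

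The real difficulty is not any single estimate but the bookkeeping: for each of the three identities and at every term of the corresponding Leibniz sum one must verify that the chosen H\"older exponents and Gagliardo--Nirenberg interpolations assign to every factor a Lebesgue norm genuinely dominated by $\|\psi\|_{H^s}$, $\|\phi\|_{H^{s+1}}$ or $\|\Phi\|_{H^{s+1}}$, with $s\ge2$ entering precisely to keep the endpoint embeddings into $L^\infty$ and $L^6$ available. Since this is exactly the computation performed in \cite[Lemma~2.1]{ALC-decay}, the lemma is quoted here rather than re-proved.
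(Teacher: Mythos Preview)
Your proposal is correct and matches the paper's treatment: the paper does not prove this lemma at all but simply cites \cite[Lemma~2.1]{ALC-decay}, exactly as you conclude. The sketch you supply (Leibniz expansion, H\"older pairing, Gagliardo--Nirenberg at the endpoints, with the $L^3$--$L^6$ split at $|\ell|=|k|$ in (iii) to avoid the unavailable $L^\infty$ bound on $\Delta\Phi$) is a correct outline of the argument behind that reference and goes beyond what the paper itself records.
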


	Then, by exploiting an energy argument, one can derive the a priori estimate for system \eqref{eq-ap}, which is stated as follows:
	\begin{prop}\label{prop-a1}
		Let $s\geq 2$ be an integer and $(Q^{n+1},u^{n+1})(t,x)$ be the solution to Cauchy problem of system \eqref{eq-ap} with initial data $(Q_0,u_0)\in  H^{s+1}\times H^{s} $ in time interval $[0,T]$. Then, for all $t\in [0,T]$, one has
		\begin{equation}\label{en-loc}
			\begin{split}
				\dfrac{1}{2}\dfrac{d}{dt}\mathbb{E}_{n+1}(t)+ \mathbb{D}_{n+1}(t) \leq C \left[\mathbb{E}_{n+1}(t) + \sum_{l=0}^4 \mathbb{E}_n^{\frac{l}{2}}(t)\mathbb{E}_{n+1}^{\frac{1}{2}}(t)\mathbb{D}_{n+1}^{\frac{1}{2}}(t) +\sum_{l=2}^4\mathbb{E}_n^{\frac{l}{2}}(t)\mathbb{D}_{n+1}^{\frac{1}{2}}(t)\right],
			\end{split}
		\end{equation}
		where $C$ is a positive constant depending only on $E_0$, $s$, $|a|$, $b$, $c$, $\mu$, $|\xi|$, $L$ and $\Gamma$.
	\end{prop}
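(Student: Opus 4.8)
The plan is a two-level energy estimate for the iterate $(Q^{n+1},u^{n+1})$, run in parallel with the proof of Proposition~\ref{prop3.1} above (and with \cite{ALC-decay}), the only genuinely new feature being the bookkeeping of the extra $\xi$-nonlinearities. At the $L^2$-level one multiplies \eqref{eq-ap}$_1$ by $Q^{n+1}-L\Delta Q^{n+1}$ and \eqref{eq-ap}$_2$ by $u^{n+1}$, takes the trace of the matrix identity, adds the two, and integrates by parts over $\R^3$. The constraint $\nabla\cdot u^n=0$ kills $(u^n\cdot\nabla Q^{n+1}:Q^{n+1})$ and $(u^n\cdot\nabla u^{n+1},u^{n+1})$; the viscosity and the linear part $L\Delta Q^{n+1}-aQ^{n+1}$ of $H^{n+1}$ produce, after integration by parts, the dissipative norms $\mu\|\nabla u^{n+1}\|_{L^2}^2$, $\Gamma L\|\nabla Q^{n+1}\|_{L^2}^2$, $\Gamma L^2\|\Delta Q^{n+1}\|_{L^2}^2$ building up $\mathbb{D}_{n+1}$, up to linear remainders absorbed into $C\mathbb{E}_{n+1}$.

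The structural heart of the argument is the same cancellation that underlies the energy inequality \eqref{eq1.4}: the antisymmetric coupling $Q^n\Omega^{n+1}-\Omega^{n+1}Q^n$ against the skew stress $L\mathbb{P}\nabla\cdot(Q^n\Delta Q^{n+1}-\Delta Q^{n+1}Q^n)$, and the $\xi$-stretching block $\xi D^{n+1}(Q^n+\frac13\mathbb{I}_3)+\xi(Q^n+\frac13\mathbb{I}_3)D^{n+1}-2\xi(Q^n+\frac13\mathbb{I}_3)\mathrm{Tr}(Q^n\nabla u^{n+1})$ against the $\xi$-stress block $-\xi\mathbb{P}\nabla\cdot[(Q^n+\frac13\mathbb{I}_3)H^{n+1}+H^{n+1}(Q^n+\frac13\mathbb{I}_3)-2(Q^n+\frac13\mathbb{I}_3)\mathrm{Tr}(Q^nH^{n+1})]$, cancel pairwise in the energy identity modulo genuinely nonlinear residuals. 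Since these cancellations are purely algebraic---they use only the cyclic property of the trace and the identities collected in Lemma~\ref{lem.cancel}, not any equation---they persist despite the mismatch of time-levels $Q^n$ versus $Q^{n+1}$. What is left over then splits into (i) linear $\xi$-terms such as $\frac{2\xi}{3}(D^{n+1}:Q^{n+1}-L\Delta Q^{n+1})$ together with the $\frac13\mathbb{I}_3$-contributions from the $u^{n+1}$-stress, each bounded by $\varepsilon\mathbb{D}_{n+1}+C\mathbb{E}_{n+1}$ via Young's inequality (the $\varepsilon\mathbb{D}_{n+1}$ absorbed on the left); and (ii) polynomial terms built from $M^{n+1}$, $M^n$, $\nabla Q^n\odot\nabla Q^n$, $Q^n\nabla u^{n+1}$, $(Q^n)^2\nabla u^{n+1}$, $Q^n\Delta Q^{n+1}$, $(Q^n)^2\Delta Q^{n+1}$, $(Q^n)^3Q^{n+1}$, $(Q^n)^4Q^{n+1}$, together with the surviving transport term $(u^n\cdot\nabla Q^{n+1}:\Delta Q^{n+1})$. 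Each of these one estimates by Hölder together with the three-dimensional embeddings $H^2\hookrightarrow L^\infty$ and $H^1\hookrightarrow L^6$---this is where $s\ge2$ is used, through $\|Q^n\|_{L^\infty}\lesssim\|Q^n\|_{H^2}\lesssim\mathbb{E}_n^{1/2}$---putting the highest derivative into a $\mathbb{D}_{n+1}^{1/2}$-factor, at most one factor into $\mathbb{E}_{n+1}^{1/2}$, and the remaining $l$ copies of $Q^n$ into $\mathbb{E}_n^{l/2}$; summing over the monomials reproduces exactly the exponents $l\in\{0,\dots,4\}$ and $l\in\{2,3,4\}$ appearing in \eqref{en-loc}.

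For the higher-order part one applies $\partial^k$, $1\le|k|\le s$, to \eqref{eq-ap}, pairs the differentiated $Q^{n+1}$- and $u^{n+1}$-equations with $\partial^k Q^{n+1}-L\partial^k\Delta Q^{n+1}$ and $\partial^k u^{n+1}$, adds them, takes the trace, and integrates by parts. The linear terms again reproduce the $\dot H^k$-parts of $\mathbb{D}_{n+1}$; every nonlinear product is written $\partial^k(fg)=f\,\partial^k g+[\partial^k,f]g$, the leading pieces $f\,\partial^k g$ being fed back into the cancellation structure (once more Lemma~\ref{lem.cancel}) and the commutators controlled by Lemma~\ref{new-com}---for instance $\|\partial^k(Q^n\Delta Q^{n+1})-Q^n\partial^k\Delta Q^{n+1}\|_{L^2}\lesssim\|Q^n\|_{H^{s+1}}\|Q^{n+1}\|_{H^{s+1}}$ and $\|[\partial^k,u^n\cdot\nabla]Q^{n+1}\|_{L^2}\lesssim\|u^n\|_{H^s}\|Q^{n+1}\|_{H^{s+1}}$---combined with the Moser-type product inequalities. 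Pairing each commutator with the appropriate dissipative factor, counting powers of $\mathbb{E}_n$ as at the $L^2$-level, and summing over $0\le|k|\le s$ yields \eqref{en-loc}.

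I expect the main obstacle to be precisely the preservation of the $\xi$-cancellation inside the iteration scheme: the stretching and stress blocks now carry the factor $Q^n$ while the test functions carry $Q^{n+1}$, so one must verify that the pairing $(\xi\text{-stretching}:H^{n+1})+(\xi\text{-stress},\nabla u^{n+1})$ still vanishes identically with all the $Q^n$'s in place, and at the $\partial^k$-level that the reconstructed leading terms still fit this pattern after the commutator splitting. The remaining (routine but delicate) point is the uniform-in-$n$ power-counting---ensuring that the quartic-in-$Q^n$ contributions coming from $\mathrm{Tr}(Q^nH^{n+1})$ and from the cubic Landau--de Gennes nonlinearity land in $\mathbb{E}_n^{l/2}$ with $l\le4$---which is exactly what pins down the exponent ranges in \eqref{en-loc} and makes the subsequent fixed-point/compactness argument for Theorem~\ref{thm5.1} close.
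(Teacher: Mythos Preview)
Your proposal is correct and follows essentially the same approach as the paper: a two-level energy estimate (basic $L^2$ plus $\partial^k$ for $1\le|k|\le s$), exploiting the algebraic cancellations of Lemma~\ref{lem.cancel} (which, as you correctly note, persist despite the $Q^n$/$Q^{n+1}$ mismatch), controlling commutators via Lemma~\ref{new-com} and Moser-type inequalities, and then power-counting the residual polynomial terms. The paper organizes the remainders into explicit lists $J_1,\dots,J_{10}$ and $K_1,\dots,K_{20}$ and estimates each individually, but the structure and the key ideas are exactly what you describe.
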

	\begin{proof}
		First, we present the basic $L^2$-energy estimate for system \eqref{eq-ap}. Indeed, one can take the summation of the first equation in \eqref{eq-ap} multiplied by $Q^{n+1}-L\Delta Q^{n+1}$ and the second equation in \eqref{eq-ap} multiplied by $u^{n+1}$, take the trace, and then integrate by parts over $\R^3$ to get
		\begin{align}\label{eq_a2}
			\dfrac{1}{2}\dfrac{d}{dt} &\Big(\|u^{n+1}\|_{L^2}^2 + \|Q^{n+1}\|_{L^2}^2 + L\|\nabla Q^{n+1}\|_{L^2}^2 \Big)\nonumber\\
			&+\mu\|\nabla u^{n+1}\|_{L^2}^2  + (|a|+L+ |a|L)\Gamma\|Q^{n+1}(t)\|_{H^{1}}^2  + \Gamma L^2\|\Delta
			Q^{n+1}\|_{L^2}^2\nonumber\\
			=&(\eta+L+aL)\Gamma\|Q^{n+1}(t)\|_{L^2}^2 + (\eta+a)\Gamma\|\nabla Q^{n+1}(t)\|_{L^2}^2 \underbrace{-\left(  u^n\cdot\nabla Q^{n+1}:Q^{n+1}  \right)  -\left(  u^n\cdot\nabla u^{n+1},u^{n+1}  \right)}_{\mathcal{Y}_1} \nonumber\\
			%\displaybreak\\
			&+L\left(  u^n\cdot\nabla Q^{n+1}:\Delta Q^{n+1}  \right) + \left( \Omega^{n+1} Q^n - Q^n\Omega^{n+1}:  Q^{n+1}  \right)  + L\left(  \nabla Q^n \odot\nabla Q^n:\nabla u^{n+1}  \right)\nonumber\\
			&\underbrace{-L\left( \Omega^{n+1} Q^n - Q^n\Omega^{n+1}:\Delta Q^{n+1}  \right)}_{\mathcal{Y}_2}  \underbrace{-  L\left( Q^n\Delta Q^{n+1}-\Delta Q^{n+1}Q^n :\nabla u^{n+1}   \right)}_{\mathcal{Y}_3} \\
			& +\xi\left( \left[Q^n+\dfrac{\mathbb{I}_3}{3} \right] H^{n+1}+H^{n+1} \left[Q^n+\dfrac{\mathbb{I}_3}{3} \right] :\nabla u^{n+1}   \right) +\Gamma  \left(  M^{n+1}:Q^{n+1} -L\Delta Q^{n+1}  \right)\nonumber\\
			&  +\xi\left( D^{n+1} \left[Q^n+\dfrac{\mathbb{I}_3}{3} \right] + \left[Q^n+\dfrac{\mathbb{I}_3}{3} \right]D^{n+1} :Q^{n+1}-L\Delta Q^{n+1}   \right) \nonumber\\
			&  -2\xi\left( Q^n{\rm Tr}(Q^nH^{n+1}) :\nabla u^{n+1}   \right)- 2\xi\left( Q^n{\rm Tr}(Q^n\nabla u^{n+1}) :Q^{n+1}-L\Delta Q^{n+1}   \right)\nonumber.
		\end{align}
		where the constant $\eta$ is defined by
		\[
		\eta=
		\begin{cases}
			0, & \text{if $a\geq 0$,}\\
			2(1+L)|a|, & \text{if $a< 0$.}\\
		\end{cases}
		\]
			%By the cancellation rules stated in
			According to Lemma~\ref{lem.cancel}, one can see that $\mathcal{Y}_1=\mathcal{Y}_2+\mathcal{Y}_3=0$. Thus,
			%the above equation 
			\eqref{eq_a2} can be simplified as
			\begin{equation}\label{eq.a3}
				\begin{split}
					&\dfrac{1}{2}\dfrac{d}{dt}\Big( \|u^{n+1}\|_{L^2}^2 + \|Q^{n+1}\|_{L^2}^2 + L\|\nabla Q^{n+1}\|_{L^2}^2 \Big)\\
					&+\mu\|\nabla u^{n+1}\|_{L^2}^2  + (|a|+L+ |a|L)\Gamma\|Q^{n+1}(t)\|_{H^{1}}^2  + \Gamma L^2\|\Delta
					Q^{n+1}\|_{L^2}^2\\
					=&(\eta+L+aL)\Gamma\|Q^{n+1}(t)\|_{L^2}^2 + (\eta+a)\Gamma\|\nabla Q^{n+1}(t)\|_{L^2}^2+\dfrac{2\xi}{3}(1-a)\left( Q^{n+1} :\nabla u^{n+1} \right)\\
					%\displaybreak\\
					&+L\left(  u^n\cdot\nabla Q^{n+1}:\Delta Q^{n+1}  \right)  + \left(  \Omega^{n+1} Q^n - Q^n\Omega^{n+1}: Q^{n+1}  \right) + L\left(  \nabla Q^n \odot\nabla Q^n:\nabla u^{n+1}  \right) \\
					&  +(1-a)\xi \left( Q^nQ^{n+1} + Q^{n+1}Q^n: \nabla u^{n+1} \right) + \dfrac{2\xi}{3}\left( M^{n+1}:\nabla u^{n+1} \right) \\
					& +\Gamma  \left(  M^{n+1}:Q^{n+1} -L\Delta Q^{n+1}  \right)  +\xi\left( M^{n+1} Q^n + Q^nM^{n+1} :\nabla u^{n+1} \right) \\
					&-2(1-a)\xi\left( Q^n{\rm Tr}(Q^n\nabla u^{n+1}) :Q^{n+1}  \right) -2\xi\left( Q^n{\rm Tr}(Q^n\nabla u^{n+1}): M^{n+1} \right)  \\
					\overset{\bigtriangleup}{=}&(\eta+L+aL)\Gamma\|Q^{n+1}(t)\|_{L^2}^2 + (\eta+a)\Gamma\|\nabla Q^{n+1}(t)\|_{L^2}^2 + \sum_{i=1}^{10} J_i.
				\end{split}
			\end{equation}
			where we used the cyclic property for the trace of three symmetric matrices.
			%Then, we estimate the right-hand side of \eqref{eq2.5} term by term.
			By applying H\"{o}lder inequality, Sobolev embedding inequality and Gagliardo-Nirenberg inequality, we immediately get
			\begin{align*}
				|J_1|+|J_4|\leq& \|\nabla Q^{n}\|_{L^4}^2\|\nabla u^{n+1}\|_{L^2} + \|Q^{n+1}\|_{L^2}\|\nabla u^{n+1}\|_{L^2}\\
				\leq& C\mathbb{E}_n(t) \mathbb{D}_{n+1}^{\frac{1}{2}}(t) + \mathbb{E}_{n+1}^{\frac{1}{2}}(t) \mathbb{D}_{n+1}^{\frac{1}{2}}(t)\\
				|J_2|+ |J_3| + |J_5|\leq&\|u^n\|_{L^6}\|\nabla Q^{n+1}\|_{L^3}\|\Delta Q^{n+1}\|_{L^2}+ \| \nabla u^{n+1}\|_{L^2}\| Q^{n}\|_{L^3}\| Q^{n+1}\|_{L^6}\\
				\leq& \|\nabla u^n\|_{L^2}\|\nabla Q^{n+1}\|_{H^1}\|\Delta Q^{n+1}\|_{L^2}+ \|\nabla u^{n+1}\|_{L^2}\| Q^{n}\|_{H^1}\| \nabla Q^{n+1}\|_{L^2}\\
				\leq& C\mathbb{E}_n^{\frac{1}{2}}(t) \mathbb{E}_{n+1}^{\frac{1}{2}}(t)\mathbb{D}_{n+1}^{\frac{1}{2}}(t),\\
				\sum_{i=6}^{10}|J_i|\leq& C  \left( \|Q^n\|_{L^4}^2+ \|Q^n\|_{L^6}^2 \|Q^{n+1}\|_{L^6} \right) \left( \|Q^{n+1}\|_{H^2} + \|\nabla u^{n+1}\|_{L^2} \right) \\
				&+ C \|\nabla u^{n+1}\|_{L^2}\left( \|Q^n\|_{L^\infty}+\|Q^n\|_{L^\infty}^2\right) \left(\|Q^n\|_{L^4}^2+ \|Q^n\|_{L^6}^2 \|Q^{n+1}\|_{L^6} \right)\\
				\leq& CD_{n+1}^{\frac{1}{2}}(t)\left( \|Q^n\|_{H^1}^2+ \|\nabla Q^n\|_{L^2}^2\|\nabla Q^{n+1}\|_{L^2}  \right)\left( 1+\|Q^n\|_{H^2}+ \|Q^n\|_{H^2}^2 \right)\\
				\leq& C\left( 1+\mathbb{E}_n^{\frac{1}{2}}(t)+\mathbb{E}_n(t) \right)\left( \mathbb{E}_n(t)+\mathbb{E}_n(t)\mathbb{E}_{n+1}^{\frac{1}{2}}(t) \right)\mathbb{D}_{n+1}^{\frac{1}{2}}(t).
			\end{align*}
			Substituting these estimates into \eqref{eq.a3}, one has
			\begin{equation}\label{eq:0-th}
				\begin{split}
					\dfrac{1}{2}\dfrac{d}{dt}\Big( \|u^{n+1}\|_{L^2}^2 +& \|Q^{n+1}\|_{L^2}^2 + L\|\nabla Q^{n+1}\|_{L^2}^2 \Big)\\
					&+\mu\|\nabla u^{n+1}\|_{L^2}^2  + (|a|+L+ |a|L)\Gamma\|Q^{n+1}(t)\|_{H^{1}}^2  + \Gamma L^2\|\Delta
					Q^{n+1}\|_{L^2}^2\\
					\leq& C\mathbb{E}_{n+1}(t) +  C\sum_{l=2}^4\mathbb{E}_n^{\frac{l}{2}}(t) \mathbb{D}_{n+1}^{\frac{1}{2}}(t)+  C\sum_{l=0}^4\mathbb{E}_n^{\frac{l}{2}}(t)  \mathbb{E}_{n+1}^{\frac{1}{2}}(t) \mathbb{D}_{n+1}^{\frac{1}{2}}(t).
				\end{split}
			\end{equation}
			Next, we show the higher order estimates for system \eqref{eq-ap}.
			%Similarly to our previous work \cite{ALC-decay}, we will prove it by considering the following two cases:
			%Building on our previous work on time decay for the simpler Vlasov-Poisson-Boltzmann system, we prove that
			%\noindent\text{(i).}
			Similarly,
			%applying $\partial^k$ ($1\leq |k|\leq s$) to system \eqref{eq-ap}, and then multiplying \eqref{eq-ap}$_1$ by $\partial^kQ^{n+1}-\partial^k\Delta Q^{n+1}$ and \eqref{eq-ap}$_2$ by $\partial^ku^{n+1}$, taking the trace and integrating by parts over $\R^3$ to have
			%Let $s\geq 3$,
			applying $\partial^k$ ($1\leq |k|\leq s$) to the \eqref{eq-ap}$_2$ and multiplying the resulting equation by $\partial^ku^{n+1}$, meanwhile,
			applying $\partial^k$ to the \eqref{eq-ap}$_1$, and multiplying the resulting equation by $\partial^kQ^{n+1}-L\partial^k\Delta Q^{n+1}$,
			taking the trace, and then summing up all the results to have
			%integrating by parts over $\R^3$ to have
			\begin{align*}
					\dfrac{1}{2}\dfrac{d}{dt}&\Big( \|\partial^ku^{n+1}\|_{L^2}^2 + \|\partial^kQ^{n+1}\|_{L^2}^2 + L\|\partial^k\nabla Q^{n+1}\|_{L^2}^2 \Big)\\
					&+\mu\|\partial^{k}\nabla u^{n+1}\|_{L^2}^2 + (|a|+L+ |a|L)\Gamma \|\partial^{k} Q^{n+1}\|_{H^1}^2+ \Gamma L^2\|\partial^k\Delta
					Q^{n+1}\|_{L^2}^2\\
					=& (\eta+L+aL)\Gamma\|\partial^k Q^{n+1}\|_{L^2}^2 + (\eta + a)\Gamma\|\partial^k\nabla Q^{n+1}\|_{L^2}^2 +L\left( u^n\cdot\nabla\partial^k Q^{n+1}:\partial^k\Delta
					Q^{n+1}  \right)   \\
					& + \left( [\partial^k, u^n]_{\rm div} Q^{n+1}:L\partial^k\Delta
					Q^{n+1}-\partial^kQ^{n+1}  \right)  +\left(\partial^k(u^n\otimes u^{n+1}):\partial^k\nabla u^{n+1} \right)\\
					& +\left( \partial^k\Omega^{n+1} Q^n -  Q^n\partial^k\Omega^{n+1}:\partial^k Q^{n+1} \right)+\left([\partial^k,(\Omega^{n+1}, Q^n)]_{-}:\partial^k Q^{n+1}-L\partial^k\Delta
					Q^{n+1} \right)  \\
					& + L \left(  \partial^k\left( \nabla Q^n \odot\nabla Q^n\right):\partial^{k}\nabla u^{n+1}  \right)  +  L\left([\partial^k,( \Delta Q^{n+1},Q^n)]_{-} :\partial^{k}\nabla u^{n+1}   \right) \\
					& + \dfrac{2\xi}{3}(1-a)\left( \partial^k Q^{n+1} :\partial^k\nabla u^{n+1}  \right) +\Gamma  \left( \partial^k M^{n+1}:\partial^kQ^{n+1} -L\partial^k\Delta Q^{n+1}  \right)\\
					& + \dfrac{2\xi}{3}\left( \partial^k M^{n+1} :\partial^k\nabla u^{n+1}  \right) + \xi\left( \partial^k \left\lbrace Q^n(M^{n+1} - aQ^{n+1}) + (M^{n+1} - aQ^{n+1}) Q^n \right\rbrace :\partial^k\nabla u^{n+1}  \right)\\
					& + \xi\left(  \partial^kD^{n+1}Q^n + Q^n \partial^kD^{n+1} : \partial^k Q^{n+1}\right)  -2 \xi\left( \partial^k \left\lbrace Q^{n} {\rm Tr}(Q^nM^{n+1}-aQ^nQ^{n+1}) \right\rbrace :\partial^k\nabla u^{n+1}  \right)\\
					& + \xi L\left([\partial^k, (\Delta Q^{n+1},Q^n )]_{+}:\partial^k\nabla u^{n+1}  \right)	+ \xi\left([\partial^k, (D^{n+1},Q^n)]_{+} : \partial^k Q^{n+1} -L\partial^k\Delta Q^{n+1} \right)\\
					&-2 \xi L\left( [\partial^k , Q^{n}] {\rm Tr}(Q^n\Delta Q^{n+1})  :\partial^k\nabla u^{n+1}  \right)  -2 \xi\left( [\partial^k , Q^{n}] {\rm Tr}(Q^n\nabla u^{n+1}): \partial^k Q^{n+1} -L\partial^k\Delta Q^{n+1}  \right)\\
					&-2 \xi L\left( Q^{n}\left(\partial^k\left\lbrace  {\rm Tr}(Q^n\Delta Q^{n+1})\right\rbrace -{\rm Tr}(Q^n\partial^k\Delta Q^{n+1})\right) :\partial^k \nabla u^{n+1}  \right)\\
					& -2 \xi\left(  Q^{n}\left(\partial^k\left\lbrace {\rm Tr}(Q^n\nabla u^{n+1}) \right\rbrace-  {\rm Tr}(Q^n\partial^k\nabla u^{n+1}) \right): \partial^k Q^{n+1} -L\partial^k\Delta Q^{n+1}  \right)\\
					& - 2\xi \left(  Q^{n}{\rm Tr}(Q^n\partial^k\nabla u^{n+1}): \partial^k Q^{n+1} \right)\\
					\overset{\bigtriangleup}{=}& (\eta+L+aL)\Gamma\|\partial^k Q^{n+1}\|_{L^2}^2 + (\eta + a)\Gamma\|\partial^k\nabla Q^{n+1}\|_{L^2}^2+  \sum_{j=1}^{20} K_j,
			\end{align*}
			where we have used $\nabla \cdot u^{n+1}={\rm Tr}\,Q^{n+1}=0$ and the following fact:
			\begin{equation}\label{ap_a6}
			\begin{split}
				&\left( Q^n \partial^k\Omega^{n+1} - \partial^k\Omega^{n+1} Q^n:\partial^k\Delta Q^{n+1}\right) -\left( Q^n\partial^k \Delta Q^{n+1} - \partial^k \Delta Q^{n+1} Q^n: \partial^k\nabla u^{n+1} \right) =0.\\
				%&\left( | Q^n| \partial^k\Delta Q^{n+1}: \partial^k\nabla u^{n+1}\right)-\left(|Q^n|\partial^k D^{n+1}: \partial^k\Delta Q^{n+1}\right)=0.
				%&\,\,\nabla \cdot u^{n+1}=0,\quad{\rm Tr}\,Q^{n+1}=0 .
			\end{split}
			\end{equation}
			Then, together with Lemma~\ref{new-com} and Lemma~\ref{lem-a1},
			one can estimate $K_j$ $(1\leq j\leq 20)$ as follows:
			\begin{align*}
				|K_1|+|K_4| +|K_{12}|\lesssim & \|u^n\|_{L^\infty}\|\partial^k \nabla Q^{n+1}\|_{L^2}\|\partial^k\Delta  Q^{n+1}\|_{L^2} + \|Q^n\|_{L^\infty}\|\partial^k\nabla  u^{n+1}\|_{L^2}\| \partial^k Q^{n+1}\|_{L^2}\\
				\lesssim& \mathbb{E}_n^{\frac{1}{2}}(t)  \mathbb{E}_{n+1}^{\frac{1}{2}}(t) \mathbb{D}_{n+1}^{\frac{1}{2}}(t),\\
				|K_2|+ |K_3|\lesssim & \| u^n\|_{H^s}\|  Q^{n+1}\|_{H^{s+1}} \| \partial^k Q^{n+1}\|_{H^2} +  \| u^{n}\|_{L^\infty}\|\partial^{k} u^{n+1}\|_{L^2} \|\partial^{k}\nabla  u^{n+1}\|_{L^2}\\
				& + \| u^{n+1}\|_{L^\infty}\|\partial^{k} u^{n}\|_{L^2} \|\partial^{k}\nabla  u^{n+1}\|_{L^2}\\
				\lesssim&  \mathbb{E}_n^{\frac{1}{2}}(t)\mathbb{E}_{n+1}^{\frac{1}{2}}(t)\mathbb{D}_{n+1}^{\frac{1}{2}}(t),\\
				|K_5|+ |K_{7}|+ |K_{14}|+ |K_{15}|\lesssim&  \|Q^n\|_{H^{s+1}}\| u^{n+1}\|_{H^s} \| \partial^k Q^{n+1}\|_{H^2}+ \| Q^n \|_{H^{s+1}}  \| Q^{n+1} \|_{H^{s+1}}  \|\partial^{k}\nabla u^{n+1} \|_{L^2} \\
				\lesssim&   \mathbb{E}_n^{\frac{1}{2}}(t)\mathbb{E}_{n+1}^{\frac{1}{2}}(t)\mathbb{D}_{n+1}^{\frac{1}{2}}(t),\\
				|K_6| + |K_8|\lesssim &   \|\partial^{k}\nabla Q^n \|_{L^2} \|\nabla Q^n \|_{L^\infty} \|\partial^{k}\nabla u^{n+1}  \|_{L^2}+ \|\partial^{k}Q^{n+1}  \|_{L^2}\|\partial^{k}\nabla u^{n+1}  \|_{L^2}\\
				\lesssim& \|  \nabla Q^n \|_{H^{s}} \|\nabla Q^n \|_{H^{2}} \|\nabla u^{n+1}  \|_{H^s}+ \mathbb{E}_{n+1}^{\frac{1}{2}}(t)\mathbb{D}_{n+1}^{\frac{1}{2}}(t)\\
				\lesssim & \mathbb{E}_n(t)\mathbb{D}_{n+1}^{\frac{1}{2}}(t)+ \mathbb{E}_{n+1}^{\frac{1}{2}}(t)\mathbb{D}_{n+1}^{\frac{1}{2}}(t),\\
				%|K_7| + |K_{14}| \lesssim &    \| Q^n \|_{H^{s+1}}  \| Q^{n+1} \|_{H^{s+1}}  \|\partial^{k}\nabla u^{n+1} \|_{L^2} 
				%\lesssim  \mathbb{E}_n^{\frac{1}{2}}(t)\mathbb{E}_{n+1}^{\frac{1}{2}}(t)\mathbb{D}_{n+1}^{\frac{1}{2}}(t),\\
				|K_9| + |K_{10}| \lesssim & \mathbb{D}_{n+1}^{\frac{1}{2}}(t)\left((1+ \| Q^{n+1}\|_{L^\infty})\| Q^n\|_{L^\infty}\|  \partial^k  Q^n  \|_{L^2} + \| Q^{n}\|_{L^\infty}^2\|  \partial^k  Q^{n+1}  \|_{L^2}\right)\\
				\lesssim & \mathbb{E}_n(t)\mathbb{D}_{n+1}^{\frac{1}{2}}(t)+ \mathbb{E}_n(t)\mathbb{E}_{n+1}^{\frac{1}{2}}(t)\mathbb{D}_{n+1}^{\frac{1}{2}}(t),\\
				|K_{11}| + |K_{13}|+ |K_{20}|\lesssim &  \|\partial^k\nabla u^{n+1}\|_{L^2} \Big(\|Q^n\|_{L^\infty}^2 \|\partial^kQ^n\|_{L^2} +\|Q^n\|_{L^\infty}^3 \|\partial^kQ^n\|_{L^2} \\
				&+\sum_{l=1}^4\|Q^n\|_{L^\infty}^l\|\partial^kQ^{n+1}\|_{L^2} + \sum_{l=0}^3\|Q^{n+1}\|_{L^\infty}\|Q^n\|_{L^\infty}^l\|\partial^kQ^{n}\|_{L^2} \Big)\\
				\lesssim  & \left(\mathbb{E}_n^{\frac{3}{2}}(t) +\mathbb{E}_n^2(t) \right)\mathbb{D}_{n+1}^{\frac{1}{2}}(t)+\sum_{l=1}^4 \mathbb{E}_n^{\frac{l}{2}}(t)\mathbb{E}_{n+1}^{\frac{1}{2}}(t)\mathbb{D}_{n+1}^{\frac{1}{2}}(t),\\
				\sum_{i=16}^{19}|K_i| \lesssim  & \mathbb{D}_{n+1}^{\frac{1}{2}}(t) \Big( \|Q^{n}\|_{H^{s+1}}^2\| Q^{n+1}\|_{H^{s+1}} +\|Q^{n}\|_{H^{s+1}}^2\| u^{n+1}\|_{H^{s}} \\
				& + \|Q^{n}\|_{L^\infty}\| Q^{n}\|_{H^{s+1}}\| u^{n+1}\|_{H^s}+ \|Q^{n}\|_{L^\infty}\| Q^{n}\|_{H^{s+1}}\| Q^{n+1}\|_{H^{s+1}}  \Big) \\
				\lesssim  &  \mathbb{E}_n(t)\mathbb{E}_{n+1}^{\frac{1}{2}}(t)\mathbb{D}_{n+1}^{\frac{1}{2}}(t).
			\end{align*}
			From all the above estimates, we then get
			%the following inequality for any $s\geq 3$:
			\begin{align}\label{eq:k-th}
				\dfrac{1}{2}\dfrac{d}{dt}\Big( &\|\partial^ku^{n+1}\|_{L^2}^2 + \|\partial^kQ^{n+1}\|_{L^2}^2 + L\|\partial^k\nabla Q^{n+1}\|_{L^2}^2 \Big)\nonumber\\
				&+\mu\|\partial^{k}\nabla u^{n+1}\|_{L^2}^2 + (|a|+L+ |a|L)\Gamma \|\partial^{k} Q^{n+1}\|_{H^1}^2+ \Gamma L^2\|\partial^k\Delta
				Q^{n+1}\|_{L^2}^2\\
				\lesssim&   \mathbb{E}_{n+1}(t) + \sum_{l=0}^4 \mathbb{E}_n^{\frac{l}{2}}(t)\mathbb{E}_{n+1}^{\frac{1}{2}}(t)\mathbb{D}_{n+1}^{\frac{1}{2}}(t) +\sum_{l=2}^4\mathbb{E}_n^{\frac{l}{2}}(t)\mathbb{D}_{n+1}^{\frac{1}{2}}(t).\nonumber
			\end{align}
			Finally, summing up \eqref{eq:0-th} and \eqref{eq:k-th} for all $1\leq |k|\leq s$, we get \eqref{en-loc} and this completes the proof.
	\end{proof}
%	\begin{remark}\label{re-a1}
%		For what concerns the local existence result for system \eqref{eq1.1} with any large initial data, we assume that $s\geq 3$ so that $\|\nabla u^n\|_{L^\infty}$ and $\|\Delta Q^n\|_{L^\infty}$ can be bounded by $E_n^{\frac{1}{2}}(t)$ given in \eqref{eq-f1}. Otherwise, we must impose a smallness condition on the initial data. Indeed, one can carry out a similar argument to deduce some a priori estimate for the case of $s=2$, but we will get
%		\[
%		\begin{split}
%			|K_7|+|K_{14}| \lesssim &   \left(  \|\nabla  Q^n \|_{L^\infty}  \|\partial^{k-1}\Delta Q^{n+1} \|_{L^2} +  \| \Delta Q^{n+1} \|_{L^\infty}  \|\partial^{k}  Q^n \|_{L^2}  \right) \|\partial^{k}\nabla u^{n+1} \|_{L^2}\\
%			\lesssim  &  \mathbb{D}_{n+1}^{\frac{1}{2}}(t)\left(  \|\nabla  Q^n \|_{H^{2}}  \|\nabla Q^{n+1} \|_{H^{s}} +  \| \Delta Q^{n+1} \|_{H^{2}}  \| Q^n \|_{H^s}  \right)\\
%			\lesssim  & \mathbb{E}_n^{\frac{1}{2}}(t)\mathbb{E}_{n+1}^{\frac{1}{2}}(t)\mathbb{D}_{n+1}^{\frac{1}{2}}(t)+ \mathbb{E}_n^{\frac{1}{2}}(t)\mathbb{D}_{n+1}(t).\\
%		\end{split}	
%		\]
%		It is easy to see that we need to use the smallness property to handle the new term $\mathbb{E}_n^{\frac{1}{2}}(t)\mathbb{D}_{n+1}(t)$.
%	\end{remark}
	%	\subsection{The proof of Theorem~\ref{thm2.1}}
	%	In this subsection, 
	Now, we are in a position to prove the local well-posedness of \eqref{eq1.1} for the case of $\xi\in \R$.
	
	\noindent\textbf{Proof of Theorem~\ref{thm5.1}.}
	
	\noindent\text{\em Step~1: Local existence.}
	First of all, we apply Young's inequality to get
	\[
		\begin{split}
			%(1+ \mathbb{E}_n^{\frac{1}{2}}(t)) \mathbb{E}_{n+1}(t) \leq& (2+ \mathbb{E}_n (t)) \mathbb{E}_{n+1}(t),\\
			\sum_{l=0}^4 \mathbb{E}_n^{\frac{l}{2}}(t)\mathbb{E}_{n+1}^{\frac{1}{2}}(t)\mathbb{D}_{n+1}^{\frac{1}{2}}(t) \leq& 16C \sum_{l=0}^4\mathbb{E}_n^{l}(t)\mathbb{E}_{n+1}(t)+\dfrac{5}{16C} \mathbb{D}_{n+1}(t),\\
			 \sum_{l=2}^4\mathbb{E}_n^{\frac{l}{2}}(t)\mathbb{D}_{n+1}^{\frac{1}{2}}(t)\leq& 16C \sum_{l=2}^4\mathbb{E}_n^{l}(t)+\dfrac{3}{16C} \mathbb{D}_{n+1}(t),
		\end{split}
	\]
	where $C$ is a positive constant from Proposition~\ref{prop-a1}. Then, it follows from \eqref{en-loc} that
	\begin{equation}\label{ap-a6}
		\dfrac{d}{dt}\mathbb{E}_{n+1}(t)+ \mathbb{D}_{n+1}(t)
		\leq \widetilde{C}\sum_{l=0}^4\mathbb{E}_n^{l}(t)\mathbb{E}_{n+1}(t) + \widetilde{C}\sum_{l=2}^4\mathbb{E}_n^{l}(t),
	\end{equation}
	where $\widetilde{C}:= 2C(16C+1)$.
	
	Suppose now $
	(Q^0,u^0)(t,x)=(Q_0,u_0)(x)$ for all $t\in [0,T]$, one has $\mathbb{E}_{0}(t)\leq 4E_0$.
	With the induction assumption $\sup_{t\in [0,T]} \mathbb{E}_{n}(t)\leq 4E_0$ in mind, and applying the Gr\"{o}nwall's inequality to \eqref{ap-a6}, we then get
	\begin{equation}
		\begin{split}
			\mathbb{E}_{n+1}(t)+ \int_0^t\mathbb{D}_{n+1}(\tau)d\tau &\leq \left( \mathbb{E}_{n+1}(0) + \widetilde{C} \sum_{l=2}^4\int_0^t\mathbb{E}_n^{l}(\tau) d\tau \right)
			\exp\left\lbrace \widetilde{C}\sum_{l=0}^4\int_0^t\mathbb{E}_n^{l}(\tau) d\tau  \right\rbrace\\
			&\leq \left( E_0 + \sum_{l=2}^4 4^{l}\widetilde{C}E_0^{l}t  \right)
			\exp\left\lbrace \sum_{l=0}^4 4^{l}\widetilde{C}E_0^{l}t \right\rbrace\\
		\end{split}	
	\end{equation}
	for all $0\leq t\leq T$.
	Obviously, one can choose some small $t$ so that
	\[
		\sum_{l=2}^4 4^{l}\widetilde{C}E_0^{l}t \leq E_0 \quad {\rm and}\quad
		\exp\left\lbrace \sum_{l=0}^4 4^{l}\widetilde{C}E_0^{l}t \right\rbrace  \leq 2.
	\]
	Then, there exists a time $T$ with
	\begin{equation}\label{eq:Tmax}
		T:=\dfrac{\min\{E_0, \ln 2\}}{\sum_{l=0}^4 4^{l}\widetilde{C}E_0^{l}}
	\end{equation}
	such that
	\[
		\sup_{t\in [0,T]}	\mathbb{E}_{n+1}(t)+ \int_0^T\mathbb{D}_{n+1}(t)dt \leq 4E_0.
	\]
	%Now, based on the above analysis,
	Then the above induction argument implies that the following energy inequality holds for all integer $n\geq 0$,
	\[
		\mathbb{E}_{n}(t)+ \int_0^t\mathbb{D}_{n}(\tau)d\tau \leq 4E_0,\quad \forall t\in [0,T].
	\]
	Now, the desired local existence result in Theorem~\ref{thm5.1} follows from the standard compactness arguments. 
	
	\noindent\text{\em Step~2: Uniqueness.}
	In this step,
	%In order to show that our constructed solution
	%prove that the solution obtained is unique,
	we consider the difference of any two solutions $(Q_1,u_1)$ and $(Q_2,u_2)$ belonging to the class specified in Theorem~\ref{thm5.1}. Then, by exploiting an energy argument jointly with the Gr\"{o}nwall's inequality, one can deduce our constructed solution is unique. We omit the details here for simplicity; see also \cite{Qtensor11,ALC-decay}.

	\noindent\text{\em Step~3: Trace free.}
	%Note that we still need to show that $Q\in S_0^3$. For symmetry property, one should note that if $Q$ is a solution of system \eqref{eq1.1}, then so is $Q^T$. Hence, we can claim that $Q^T=Q$ by the uniqueness of solution.
	In order to show traceless property of $Q$, we take the trace on both sides of the first equation in system \eqref{eq1.1} to find
	\[
		\begin{cases}
			\partial_t {\rm Tr}Q+ u \cdot\nabla{\rm Tr} Q  =\Gamma  \left(L\Delta {\rm Tr}Q -a{\rm Tr}Q -c{\rm Tr}Q {\rm Tr}(Q^2)\right)-2\xi{\rm Tr}(Q\nabla u){\rm Tr}Q,\\
			{\rm Tr}Q|_{t=0}= {\rm Tr}Q_0=0,
		\end{cases}
	\]
	where we have used the fact that $\Omega^T=-\Omega$, ${\rm Tr}D=0$ and ${\rm Tr}(Q\Omega)=0$. Since the above system is similar to that of
	%the same as the one 
	$\xi=0$. Thus we can get the following estimate via a similar argument as in \cite{weak-ALC},
	\[
		\dfrac{d}{dt}\|{\rm Tr}Q\|_{L^2}^2 + \Gamma\|\nabla {\rm Tr}Q\|_{L^2}^2 \lesssim\|{\rm Tr}Q\|_{L^2}^2.
	\]
	Applying the Gr\"{o}nwall's inequality again, we complete the proof of Theorem~\ref{thm5.1}.
	
	\section{Some basic theories and lemmas}\label{ap2}
	In this section, we review the inequalities that are used extensively in this paper.
	First, the well-known Moser-type calculus inequalities are stated in the following Lemma \cite{ms-eq}.
	\begin{lemma}\label{lem-a1}
		If $f$ and $g\in H^s$, then for all $|k|\leq s$ we have
		\begin{align*}
			\|\partial_x^k\left(fg\right)\|_{L^2}\leq& C\left( \|f\|_{L^\infty}\|\partial_x^k g\|_{L^2}  +  \|g\|_{L^\infty}\|\partial_x^k f\|_{L^2}\right),\\
			%\|\partial_x^k\left(f g\right)\|_{L^1}\leq& C\left( \|f\|_{L^2}\|\partial_x^k g\|_{L^2}  +  \|g\|_{L^2}\|\partial_x^k f\|_{L^2}\right),\\
			\|[\partial_x^k,f] g\|_{L^2}\leq& C\left( \|\partial_x f\|_{L^\infty}\|\partial_x^{k-1}g\|_{L^2}  +  \|g\|_{L^\infty}\|\partial_x^k f\|_{L^2}\right).
		\end{align*}
	\end{lemma}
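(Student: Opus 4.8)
\textbf{Proof proposal for Lemma~\ref{lem-a1}.}
Both inequalities are of the classical Moser/Kato--Ponce type, and the plan is the elementary one: expand by the Leibniz rule, isolate the two extreme terms which already have the asserted form, and treat every remaining mixed term by Hölder's inequality, a Gagliardo--Nirenberg interpolation, and Young's inequality. Throughout, $k$ denotes a multi-index with $|k|\le s$.

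For the product estimate I would start from
\[
\partial_x^k(fg)=\sum_{0\le j\le k}\binom{k}{j}\,\partial_x^j f\;\partial_x^{k-j}g ,
\]
observing that $j=0$ yields $f\,\partial_x^k g$ with $\|f\,\partial_x^k g\|_{L^2}\le\|f\|_{L^\infty}\|\partial_x^k g\|_{L^2}$ and $j=k$ yields $g\,\partial_x^k f$ with $\|g\,\partial_x^k f\|_{L^2}\le\|g\|_{L^\infty}\|\partial_x^k f\|_{L^2}$. For each intermediate index $0<|j|<|k|$ I would set $\theta=|j|/|k|\in(0,1)$, apply Hölder with conjugate exponents $2/\theta$ and $2/(1-\theta)$, then the Gagliardo--Nirenberg inequalities
\[
\|\partial_x^j f\|_{L^{2|k|/|j|}}\lesssim\|f\|_{L^\infty}^{1-\theta}\|\partial_x^{k}f\|_{L^2}^{\theta},\qquad
\|\partial_x^{k-j}g\|_{L^{2|k|/(|k|-|j|)}}\lesssim\|g\|_{L^\infty}^{\theta}\|\partial_x^{k}g\|_{L^2}^{1-\theta},
\]
and finally Young's inequality $A^{1-\theta}B^{\theta}\le(1-\theta)A+\theta B$ with $A=\|f\|_{L^\infty}\|\partial_x^{k}g\|_{L^2}$ and $B=\|g\|_{L^\infty}\|\partial_x^{k}f\|_{L^2}$. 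Summing the finitely many terms (and, as usual, summing over the multi-indices of each fixed order to pass between a single derivative and the full $H^s$-seminorm) closes the first estimate.

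For the commutator estimate I would use $[\partial_x^k,f]g=\partial_x^k(fg)-f\,\partial_x^k g=\sum_{0<j\le k}\binom{k}{j}\,\partial_x^j f\;\partial_x^{k-j}g$, where now every summand carries at least one derivative on $f$, so $|k-j|\le|k|-1$. Here $j=k$ again contributes $g\,\partial_x^k f$, the indices with $|j|=1$ are handled directly by $\|\partial_x f\|_{L^\infty}\|\partial_x^{k-1}g\|_{L^2}$, and for $1<|j|<|k|$ I would take $\theta=\tfrac{|j|-1}{|k|-1}\in(0,1)$, use Hölder with exponents $2/\theta$ and $2/(1-\theta)$, and apply Gagliardo--Nirenberg to $\partial_x f$ (orders $|j|-1$ and $|k|-1$ enter) and to $g$,
\[
\|\partial_x^j f\|_{L^{2(|k|-1)/(|j|-1)}}\lesssim\|\partial_x f\|_{L^\infty}^{1-\theta}\|\partial_x^{k}f\|_{L^2}^{\theta},\qquad
\|\partial_x^{k-j}g\|_{L^{2(|k|-1)/(|k|-|j|)}}\lesssim\|g\|_{L^\infty}^{\theta}\|\partial_x^{k-1}g\|_{L^2}^{1-\theta},
\]
so that Young's inequality bounds the product by $\|g\|_{L^\infty}\|\partial_x^{k}f\|_{L^2}+\|\partial_x f\|_{L^\infty}\|\partial_x^{k-1}g\|_{L^2}$; summing over $j$ finishes the argument.

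There is no genuine obstacle here, the statement being classical; the one thing that needs care is the bookkeeping — checking that the interpolation exponents lie in $(0,1)$, which is exactly why the extreme indices ($|j|\in\{0,|k|\}$ in the first case, $|j|\in\{1,|k|\}$ in the second) must be peeled off first, and confirming that the Gagliardo--Nirenberg inequalities with an $L^\infty$ endpoint are admissible in the relevant dimension (they are, being scale-invariant and dimension-free in this form, and in $\R^3$ one also has $H^s\hookrightarrow L^\infty$ for $s\ge 2$, so the right-hand $L^\infty$ norms are finite whenever $f,g\in H^s$). An alternative would be to quote the Kato--Ponce product and commutator estimates in Bessel-potential spaces directly; the argument sketched above is the concrete version recorded in \cite{ms-eq}.
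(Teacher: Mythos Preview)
Your argument is correct and is exactly the standard Leibniz--Gagliardo--Nirenberg--Young derivation of the Moser calculus inequalities. Note, however, that the paper does not actually prove Lemma~\ref{lem-a1}: it merely records the two estimates as ``well-known'' and cites \cite{ms-eq}. So your proposal supplies strictly more detail than the paper, and the approach you outline is precisely the one underlying the cited reference; there is nothing to compare.
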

	The following useful lemma regarding the cancellations rules of system \eqref{eq1.1} can be found in \cite{weak-ALC,Qtensor12,weak-strong-3d} and references therein.
	\begin{lemma}\label{lem.cancel}
		Let $u\in L_\sigma^2(\R^d)$ and $Q, G\in H^2(\R^d)\cap S_0^d$, and let
		\[
		D=\frac{1}{2}\left( \nabla u + \nabla u^{T} \right)\quad{\rm and}\quad\Omega=\frac{1}{2}\left( \nabla u - \nabla u^{T} \right).
		\]
		Then we have
		\begin{enumerate}[(1).]
			\item $(u\cdot\nabla u,u)= (u \cdot\nabla Q:Q)=0;$
			\item $(Q \Omega-\Omega Q:Q)=0;$
			\item $(u\cdot\nabla Q:\Delta Q)-\left( \nabla\cdot (\nabla Q \odot \nabla Q): u \right)=0 ;$
			\item $(G \Omega - \Omega G:\Delta Q) -\left( G \Delta Q - \Delta Q G: \nabla u \right) =0;$
			\item $(D Q+Q D:G) -\left( G  Q + Q G: \nabla u \right) =0;$
			\item $( Q:\nabla u)-(D: Q)=0;$
			
		\end{enumerate}
	\end{lemma}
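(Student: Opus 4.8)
The plan is to derive each of the six identities from three elementary ingredients: the divergence-free constraint $\nabla\cdot u=0$ (used to discard a pure-transport term after one integration by parts), the cyclic invariance of the trace ${\rm tr}(ABC)={\rm tr}(CAB)$, and the orthogonality of symmetric and antisymmetric matrices under the Frobenius pairing $A:B:=A_{\alpha\beta}B_{\alpha\beta}$ — which, combined with the splitting $\nabla u=D+\Omega$, allows one to contract $\nabla u$ against a symmetric (resp. antisymmetric) partner by keeping only $D$ (resp. $\Omega$). I would organize the verification into three groups: the purely algebraic identities (2), (5), (6); the pure integration-by-parts identities (1), (3); and (4), which combines both.

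For the algebraic ones: (6) is immediate since $Q$ is symmetric, so $Q:\nabla u=Q:D=D:Q$. For (2), the commutator $Q\Omega-\Omega Q$ of a symmetric and an antisymmetric matrix is symmetric, and after a cyclic rearrangement ${\rm tr}((Q\Omega-\Omega Q)Q)={\rm tr}(Q^2\Omega)-{\rm tr}(\Omega Q^2)=0$, since each trace pairs the symmetric matrix $Q^2$ with the antisymmetric $\Omega$. For (5), both $DQ+QD$ and $GQ+QG$ are symmetric, so on the right-hand side one may replace $\nabla u$ by $D$; expanding both sides into traces of triple products of $D,Q,G$ and matching them term by term with the cyclic rule gives equality. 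For (1), $\nabla\cdot u=0$ yields $(u\cdot\nabla u,u)=\frac12\int u\cdot\nabla|u|^2\,dx=0$ and likewise $(u\cdot\nabla Q:Q)=\frac12\int u\cdot\nabla|Q|^2\,dx=0$.

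The two identities with genuine analytic content are (3) and (4). For (3), integrating $(u\cdot\nabla Q:\Delta Q)$ by parts once in the Laplacian derivative produces, by the product rule, a term $-\frac12\int u_\gamma\partial_\gamma(|\nabla Q|^2)\,dx$ that vanishes by the divergence-free condition, together with the term $-\int\partial_\delta u_\gamma\,\partial_\gamma Q_{\alpha\beta}\,\partial_\delta Q_{\alpha\beta}\,dx$; a single integration by parts in $(\nabla\cdot(\nabla Q\odot\nabla Q):u)$ produces precisely the same expression after relabelling summation indices, which is (3). For (4), writing $\nabla u=D+\Omega$ one drops the $D$-contribution against the antisymmetric commutator $G\Delta Q-\Delta Q G$, and repeated cyclic permutation shows that both $(G\Omega-\Omega G:\Delta Q)$ and $(G\Delta Q-\Delta QG:\nabla u)$ collapse to the same combination ${\rm tr}(\Delta Q\,G\,\Omega)-{\rm tr}(G\,\Delta Q\,\Omega)$. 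I do not expect any real obstacle here; the whole lemma is bookkeeping. The only place demanding attention is keeping the index and sign conventions consistent in (3) and (4) — in particular that $(\nabla u)_{\alpha\beta}=\partial_\beta u_\alpha$, and that the symbol ``$:$'' against the non-symmetric field $\nabla u$ (or $\nabla\psi$) means the Frobenius contraction and not ${\rm tr}(\cdot)$ — so that the symmetric/antisymmetric splitting is applied with the correct orientation.
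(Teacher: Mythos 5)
Your proof is correct; the paper itself states this lemma without proof (deferring to \cite{weak-ALC,Qtensor12,weak-strong-3d}), and the argument you outline --- the divergence-free integration by parts for (1) and (3), and cyclic invariance of the trace together with the orthogonality of symmetric and antisymmetric matrices for (2), (4), (5), (6) --- is exactly the standard one used there. You are also right to single out the convention issue in (4): the contraction of the antisymmetric commutator $G\Delta Q-\Delta Q G$ against $\nabla u$ must be read as the Frobenius pairing $A_{\alpha\beta}\partial_\beta u_\alpha$ (the one produced by integrating $\nabla\cdot\sigma$ against $u$), since with the bare trace convention ${\rm tr}(A\nabla u)$ the identity would come out with the opposite sign.
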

	Next, we introduce the following well-known interpolation inequality:
	\begin{lemma}[Gagliardo-Nirenberg inequality]\label{lem.GN}
		Let $1 \leq q, r \leq  \infty$ and $m$ is a natural number. Suppose that there is a real number $\alpha$ and a natural number $j<m$ are such that
		\[
		\frac{1}{p} = \frac{j}{d} + \left( \frac{1}{r} - \frac{m}{d} \right) \alpha + \frac{1 - \alpha}{q}
		\]
		and
		\[
		\frac{j}{m} \leq \alpha \leq 1.
		\]
		In particular, under the folowing two exceptional cases:
		\begin{enumerate}[(i).]
			\item  If $j = 0, mr < d$ and $q = \infty$, then it is necessary to make the additional assumption that either $u$ tends to zero at infinity or that $u$ lies in $L^s$ for some finite $s > 0$.
			\item If $1 < r <\infty$ and $m-j- \frac{d}{r}$ is a non-negative integer, then it is necessary to assume also that $\alpha\ne 1$.
		\end{enumerate}
		Then it holds that
		\begin{equation}
			\| \mathrm{D}^{j} u \|_{L^{p}} \leq C \| \mathrm{D}^{m} u \|_{L^{r}}^{\alpha} \| u \|_{L^{q}}^{1 - \alpha},
		\end{equation}
		where $C$ is a constant depending only on $d$, $m$, $j$, $q$, $r$ and $\alpha$.
	\end{lemma}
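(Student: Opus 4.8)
The plan is to prove the inequality by localizing $u$ in frequency and reducing the statement to a single real-variable optimization. Before doing any analysis I would fix the admissible exponents by a scaling argument: applying the putative inequality to the rescaled function $u_\lambda(x):=u(\lambda x)$ and examining the behaviour as $\lambda\to0$ and $\lambda\to\infty$ forces
\[
\frac{1}{p}-\frac{j}{d}=\alpha\Big(\frac{1}{r}-\frac{m}{d}\Big)+(1-\alpha)\frac{1}{q},
\qquad \frac{j}{m}\le\alpha\le1,
\]
which is precisely the hypothesis of the lemma; this step is purely algebraic and simultaneously explains why these constraints cannot be relaxed.

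The core of the proof is a Littlewood--Paley decomposition $u=\sum_{k\in\mathbb Z}\dot\Delta_k u$ combined with Bernstein's inequalities. For each dyadic block one has $\|\mathrm D^j\dot\Delta_k u\|_{L^p}\sim 2^{jk}\|\dot\Delta_k u\|_{L^p}$, and I would control $2^{jk}\|\dot\Delta_k u\|_{L^p}$ in two competing ways: a low-frequency bound in terms of $\|u\|_{L^q}$ (favourable for $k$ below a threshold) and a high-frequency bound in terms of $\|\mathrm D^m u\|_{L^r}$ (favourable for $k$ above it), the exact form of each Bernstein estimate depending on the ordering of $p,q,r$. Splitting $\sum_k 2^{jk}\|\dot\Delta_k u\|_{L^p}$ at an arbitrary threshold $k_0$, summing the two resulting geometric series --- both converge in the relevant direction exactly when $j/m<\alpha<1$, again by the exponent identity above --- and then minimizing the result over $k_0\in\mathbb R$ yields $\|\mathrm D^j u\|_{L^p}\lesssim\|u\|_{L^q}^{1-\alpha}\|\mathrm D^m u\|_{L^r}^{\alpha}$ with the correct exponents. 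To make the block summation rigorous for $1<p<\infty$ one passes through the square-function characterization of $L^p$ and the corresponding Besov embeddings; for $p\in\{1,\infty\}$ one works directly with the $\ell^1$ sum of blocks, which introduces only a logarithmic factor that is harmless after the optimization.

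It remains to address the endpoint $\alpha=1$ and the two exceptional regimes. When $\alpha=1$ the $L^q$ term drops out and the inequality is just the homogeneous Sobolev embedding $\dot W^{m,r}\hookrightarrow\dot W^{j,p}$; this is covered by the argument above, except when $m-j-\frac{d}{r}$ is a nonnegative integer, where the geometric series at the critical frequency degenerates into a divergent harmonic sum --- the familiar logarithmic failure of borderline Sobolev embeddings --- which is why the lemma must exclude this configuration unless $\alpha\ne1$. Case (i), with $j=0$, $mr<d$ and $q=\infty$, is the statement that $\dot W^{m,r}$ does not embed into $L^\infty$: here the low-frequency piece $\sum_{k\le k_0}\|\dot\Delta_k u\|_{L^\infty}$ need not be summable without extra information, and one closes it precisely by invoking decay at infinity or membership in some $L^s$ with $s<\infty$, as stated.

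I expect the main obstacle to be exactly this endpoint and exceptional-case bookkeeping: the generic interior case ($j/m<\alpha<1$, no integer coincidences) is a two-line optimization once Bernstein's inequality is in hand, whereas identifying why the integer/endpoint exclusions are sharp, quantifying the logarithmic losses, and handling the $p=1,\infty$ and $q=\infty$ borderlines require care. An alternative, entirely classical route --- Nirenberg's original proof by one-dimensional interpolation followed by Fubini slicing and induction on the order of the derivative --- would avoid Littlewood--Paley theory altogether but replaces these difficulties with a more intricate iteration; I would fall back on it only if an argument phrased purely in terms of elementary calculus inequalities were preferred.
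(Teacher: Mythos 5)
The paper offers no proof of this lemma: it is quoted verbatim as the classical Gagliardo--Nirenberg interpolation inequality in Appendix~B and used as a black box, so there is no argument of the authors' to compare yours against. Your outline is a legitimate and standard modern proof --- scaling to pin down the exponent relation, Littlewood--Paley decomposition with competing Bernstein bounds on low and high frequencies, splitting at a threshold $k_0$ and optimizing --- and it correctly identifies why the two exceptional cases must be excluded (divergence of the critical geometric series when $m-j-\tfrac{d}{r}$ is a nonnegative integer at $\alpha=1$, and the failure of $\dot W^{m,r}\hookrightarrow L^\infty$ when $mr<d$). This is genuinely different in flavour from the classical route the paper implicitly relies on (Nirenberg's one-dimensional interpolation plus slicing and induction on $j$), and it buys a cleaner explanation of the borderline failures at the cost of harmonic-analysis machinery.

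One step of your sketch does need repair before it is a proof: the low-frequency bound $\|\dot\Delta_k u\|_{L^p}\lesssim 2^{kd(\frac{1}{q}-\frac{1}{p})}\|\dot\Delta_k u\|_{L^q}$ is Bernstein's inequality and is only available when $q\le p$. The admissible range of the lemma contains configurations with $p<q$ (for instance $j=0$, $m=1$, $r=2$, $q=\infty$, $d=3$, $\alpha=\tfrac12$ gives $p=12<q$), and for a single frequency block the $L^p$ norm simply cannot be controlled by the $L^q$ norm when $p<q$, so the block-by-block low-frequency sum does not close as written. Your parenthetical "depending on the ordering of $p,q,r$" gestures at this but does not resolve it; the standard fixes are either to first interpolate $\|\mathrm D^j u\|_{L^p}$ between two exponents for which the block argument does work, or to retreat to the Nirenberg slicing proof for those exponent configurations, as you propose in your final paragraph. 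With that case handled, the argument is complete.
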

	To simplify the proof of Theorem~\ref{thm1.2}, we close this section by the following lemma (see also \cite{weak-strong-3d}).
	\begin{lemma}\label{lemB4}
		Let $(Q,u)$ and $(R,v)$ be two Leray-Hopf weak solutions in Theorem~\ref{thm1.2}, then we have
		\[
			\begin{split}
			\mathcal{T}_2 + \mathcal{T}_3 \sim&\int_0^t (\Delta G:u\cdot\nabla G) -(\Delta Q:\omega\cdot\nabla G)-(G:\omega\cdot\nabla Q)ds,\\
			\mathcal{T}_5 + \mathcal{T}_7 \sim&\int_0^t\left(\widetilde{\Omega} Q-Q\widetilde{\Omega}: G\right) +  (G\Delta G-\Delta GG:\nabla u) - (G\Delta Q-\Delta QG:\nabla \omega)ds,\\
			\mathcal{T}_8 \sim&\int_0^t\left(G\nabla u: G \right)+ \left(G\nabla u: \Delta G\right)+ (GQ:\nabla \omega)+ (G\Delta Q:\nabla \omega)ds.
			\end{split}
		\]
%		\begin{equation}
%			\begin{split}
%				\mathcal{T}_4+ \mathcal{T}_{10}\sim& \int_0^t ((|Q| -|R|)D:\Delta G) - ((|Q| -|R|)D: G) + ((|R| -|Q|) \Delta Q,\nabla\omega) ds\\ &+2\lambda\int_0^t (|R| G,\nabla \omega) ds -2a\lambda\int_0^t((|R| -|Q|)R,\nabla\omega) ds  -2a\lambda\int_0^t(|Q|G,\nabla\omega) ds,
%			\end{split}
%		\end{equation}
%		\begin{equation}
%			\begin{split}
%				\mathcal{T}_{11}+\mathcal{T}_{12}\sim \int_0^t\left(RG+QG \right) - b\left( \dfrac{tr(RG+QG) }{3}I_3 \right) - c \left( Qtr(RG+QG) +  Gtr(R^2)  \right)ds,
%			\end{split}
%		\end{equation}
%		and
%		\begin{equation}
%			\begin{split}
%				&|R|\mathcal{M}[R]-|Q|\mathcal{M}[Q]\\
%				=& b\left( |R|(RG+QG) + (|R|-|Q|)Q^2 \right) - b\left( \dfrac{|R|tr(RG+QG) + (|R|-|Q|)tr(Q^2)}{3}I_3 \right)\\
%				& + c \left( (|Q|-|R|) Qtr(Q^2) -|R| Qtr(RG+QG) - |R| Gtr(R^2)  \right).
%			\end{split}
%		\end{equation}
	\end{lemma}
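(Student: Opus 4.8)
\noindent\textbf{Proof proposal for Lemma~\ref{lemB4}.}
The three identities are purely algebraic: the plan is to expand every bilinear and trilinear pairing occurring in $\mathcal{T}_2,\mathcal{T}_3,\mathcal{T}_5,\mathcal{T}_7,\mathcal{T}_8$ around the differences $G=R-Q$, $\omega=v-u$, $\widetilde{\Omega}=\widebar{\Omega}-\Omega$ and $\widetilde{D}:=\widebar{D}-D=\tfrac12(\nabla\omega+\nabla\omega^{T})$, integrate by parts using $\nabla\cdot u=\nabla\cdot v=0$, and then discard every monomial that either vanishes by the cancellation rules of Lemma~\ref{lem.cancel}(1)--(5) or is a null divergence. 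Throughout, the symbol $\sim$ is read modulo the fixed multiplicative constants $L$ and $\xi$ carried by the definitions of the $\mathcal{T}_i$; since these expressions are used in the proof of Theorem~\ref{thm1.2} only inside absolute values, this is harmless.

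For $\mathcal{T}_2+\mathcal{T}_3$, I would first use the integration-by-parts identity $(w\cdot\nabla A:\Delta A)=(\nabla\cdot(\nabla A\odot\nabla A):w)$ --- which is Lemma~\ref{lem.cancel}(3) applied to an arbitrary $A\in H^2(\R^3;S_0^3)$ and an arbitrary divergence-free $w$ --- to rewrite $\mathcal{T}_2=2L\int_0^t (u\cdot\nabla R:\Delta R)+(v\cdot\nabla Q:\Delta Q)\,ds$. Substituting $R=Q+G$, $v=u+\omega$ and expanding, the terms built only from $(Q,u)$ cancel against the matching pieces of $\mathcal{T}_3$, the ``half-mixed'' terms of the form $(u\cdot\nabla Q:\Delta G)$ and $(u\cdot\nabla G:\Delta Q)$ cancel in pairs, and for the lowest-order pairings one invokes $(w\cdot\nabla Q:Q)=0$ together with $\int w_\alpha\partial_\alpha(Q:G)\,dx=0$ for $w$ divergence-free, which collapses $(Q:u\cdot\nabla G)+(G:u\cdot\nabla Q)$ to zero. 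What survives is exactly $2L\int_0^t (\Delta G:u\cdot\nabla G)-(\Delta Q:\omega\cdot\nabla G)\,ds-2\int_0^t(G:\omega\cdot\nabla Q)\,ds$, the asserted form. The same scheme handles $\mathcal{T}_5+\mathcal{T}_7$: here the key tools are Lemma~\ref{lem.cancel}(2) and (4) --- for symmetric traceless $A,B$ and antisymmetric $\Theta=\tfrac12(\nabla w-\nabla w^{T})$ one has $(\Theta A-A\Theta:A)=0$ and $(A\Theta-\Theta A:\Delta B)=(A\Delta B-\Delta BA:\nabla w)$ --- plus the trace-cyclicity fact $(\Theta A-A\Theta:B)+(\Theta B-B\Theta:A)=0$. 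Expanding $\mathcal{T}_7$ with $\widebar{\Omega}=\Omega+\widetilde{\Omega}$ kills the pure $(Q,u)$ contributions and telescopes the non-$L$ part to $2\int_0^t(\widetilde{\Omega}Q-Q\widetilde{\Omega}:G)\,ds$; applying (4) with $(A,B,w)$ running over $(Q,Q,u)$, $(Q,G,u)$, $(G,Q,u)$, $(Q,Q,\omega)$ makes the entire $L$-part of $\mathcal{T}_5+\mathcal{T}_7$ collapse to $2L\int_0^t (G\Delta G-\Delta GG:\nabla u)-(G\Delta Q-\Delta QG:\nabla\omega)\,ds$, and adding the two pieces gives the second identity.

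For $\mathcal{T}_8$, since $D,\widebar{D},Q,G$ are all symmetric and traceless, cyclicity of the trace yields $(DQ+QD-2RD:R)=\int\mathrm{tr}\,D\,(QR+RQ-2R^{2})\,dx$, and an elementary substitution $R=Q+G$ gives $QR+RQ-2R^{2}=-(QG+GQ)-2G^{2}$ while the $\widebar{D}$-bracket produces $RQ+QR-2Q^{2}=QG+GQ$. Writing $\widebar{D}=D+\widetilde{D}$, the $D$-contributions of the two brackets cancel; $\int\mathrm{tr}\,D\,G^{2}=(G\nabla u:G)$ because the skew part of $\nabla u$ drops against the symmetric $G^{2}$; and Lemma~\ref{lem.cancel}(5) with $w=\omega$ turns $\int\mathrm{tr}\,\widetilde{D}\,(QG+GQ)\,dx$ into $(GQ+QG:\nabla\omega)$. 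The $L$-terms are treated with the same cyclicity manipulation applied to $\Delta R$ in place of $R$, which is where the remaining $(G\nabla u:\Delta G)$ and $(G\Delta Q:\nabla\omega)$ contributions come from. Collecting everything reproduces the third identity up to the constant $\xi$.

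The main obstacle is not conceptual but combinatorial: each of the three identities involves roughly a dozen multilinear monomials --- each formed from two of $Q,G$ and one of $\nabla u,\nabla\omega,\Delta Q,\Delta G$, sometimes with an extra factor of $Q$ or $G$ --- and the cancellations depend delicately on the precise sign conventions in Lemma~\ref{lem.cancel}(2)--(5), on trace-cyclicity, and on boundary terms vanishing at infinity. A single sign slip destroys the cancellation, so the safe route is to sort the monomials by ``type'' (pure $(Q,u)$, half-mixed, genuinely mixed) and verify cancellation type by type, exactly as was done for the corotational case in \cite{weak-strong-3d}.
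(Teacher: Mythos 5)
Your proposal is correct and follows exactly the route the paper intends: the paper omits the proof of Lemma~\ref{lemB4} and refers to \cite{weak-strong-3d}, where the identities are obtained precisely by expanding around $G$, $\omega$, $\widetilde{\Omega}$, $\widetilde{D}$ and cancelling via Lemma~\ref{lem.cancel}, trace cyclicity, and integration by parts with $\nabla\cdot u=\nabla\cdot v=0$; I checked the type-by-type bookkeeping for all three identities and it closes as you describe. The only caveat is that $\sim$ here absorbs not just the constants $2$, $L$, $\xi$ but also harmless symmetrizations (e.g.\ $D$ versus $\nabla u$ against $G\Delta G$, and $GQ+QG$ versus $GQ$), which is consistent with how the lemma is used in \eqref{eq4.5}--\eqref{eq4.2}, where only the pointwise magnitudes of the resulting monomials matter.
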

\end{appendices}

\end{document}